\def\ttitle{Effect of density dependence on coinfection dynamics: part 2}
\tikzset{
  big arrow/.style={
    decoration={markings,mark=at position 1 with {\arrow[scale=2,#1]{>}}},
    postaction={decorate},
    shorten >=0.4pt}}
\begin{document}

\title{\ttitle
}



\author{Jonathan Andersson\and Samia Ghersheen \and Vladimir Kozlov \and Vladimir~G.~Tkachev* \and Uno~Wennergren}

\authorrunning{J. Andersson, S. Ghersheen, V. Kozlov, V. Tkachev, U. Wennergren} 

\institute{J. Andersson \at
              Department of Mathematics, Link\"oping University \\
              \email{jonathan.andersson@liu.se}           
           \and
           S. Ghersheen\at
            Department of Mathematics, Link\"oping University \\
              \email{samia.ghersheen@liu.se}
              \and
            V. Kozlov\at
            Department of Mathematics, Link\"oping University \\
              \email{vladimir.kozlov@liu.se}
              \and
           V. Tkachev\at
            Department of Mathematics, Link\"oping University \\
              \email{vladimir.tkatjev@liu.se}
              \and
              U. Wennergren\at
Department of Physics, Chemistry, and Biology, Link\"oping University\\
\email{uno.wennergren@liu.se}
    }

\date{Received: date / Accepted: date}

\maketitle
	\begin{abstract}
In this paper we continue the stability analysis of the model for coinfection with density dependent susceptible population introduced in \cite{part1}. We consider the remaining parameter values
left out from \cite{part1}. We look for coexistence equilibrium points, their stability and dependence on the carrying capacity $K$. Two sets of parameter value are determined,
each giving rise to different scenarios for the equilibrium branch parametrized by $K$. In both scenarios the branch includes coexistence points
implying that both coinfection and single infection of both diseases can exist together in a stable state. There  are  no  simple  explicit  expression  for  these  equilibrium  points  and  we  will require a more delicate analysis of these points with a new bifurcation technique adapted to such epidemic related problems.
The first scenario is described by the branch of stable equilibrium points which includes a section of coexistence points starting at a bifurcation equilibrium point with zero second single infections and finishing at another bifurcation point with zero first single infections. In the second scenario the branch also includes a section of coexistence equilibrium points with the same type of starting point but the branch stays inside the positive cone after this. The coexistence equilibrium points are stable at the start of the section. It stays stable as long as the product of $K$ and the rate $\bar \gamma$ of coinfection resulting from two single infections is small but, after this it can reach a Hopf bifurcation and periodic orbits will appear.
\end{abstract}	
\keywords{SIR model, coinfection, carrying capacity, global stability}
\maketitle
	
	\section{Introduction} In this paper we continue on the work of \cite{part1} where we studied the equilibrium dynamics for a continuous compartmental model of two infectious diseases with the ability to co-infect individuals. In the model we assume that only the susceptibles can give birth and that the reproductive rate depends on the density of the susceptibles. This dependence is modelled with a parameter $K>0$ which represent the carrying capacity of the population. Recall that by an \textit{(equilibrium) branch} we understand any continuous in $K\ge0$ family of equilibrium points of a dynamic system which are locally stable for all but finitely many threshold values of $K$.

In \cite{part1} it was discovered that for a certain set of parameters excluding $K$ there exists an equilibrium branch with respect to $K$ of locally stable equilibrium. For this branch all of the equilibrium points where expressed explicitly and $K$ for which a compartment changed from being zero to non-zero or vice versa where pointed out.

In this paper we will show the same holds for the rest of the parametric choices. 	The main difficulty compared to \cite{part1} is that for our parameters the equilibrium branch consists of coexistence equilibrium where single infection of each disease and coinfection both occurs. There are no simple explicit expression for these equilibrium points and we will require a more delicate analysis of these points with a new bifurcation technique adapted to such epidemic related problems.


\subsection{The model}

As in \cite{part1}, we assume that the single infection cannot be transmitted by the contact with a coinfected person. This process gives rise to the  model:
\begin{equation*}\label{submodel2}
\begin{split}
S' &=(r(1-\frac{S}{K})-\alpha_1I_1-\alpha_2I_2-\alpha_3I_{12})S,\\
I_1' &=(\alpha_1S - \eta_1I_{12}-\gamma_1I_2 - \mu_1)I_1,\\
I_2' &=(\alpha_2S - \eta_2I_{12}-\gamma_2I_1- \mu_2)I_2,\\
I_{12}' &=(\alpha_3S+ \eta_1I_1+\eta_2I_2-\mu_3)I_{12}+\overline{\gamma}I_1I_2, \\
R' &=\rho_1 I_1+\rho_2I_2+\rho_3 I_{12}-\mu_4' R,
\end{split}
\end{equation*}
where we use the following notation:
\begin{itemize}
\item[$\bullet$] $S$ represents the susceptible class,
\item[$\bullet$] $I_1$ and $I_2$ are the infected classes from strain 1 and strain 2 respectively,
\item[$\bullet$] $I_{12} $ represents the co-infected class,
\item[$\bullet$]
$R$ represents the recovered class.
\end{itemize}

Following \cite{Allen,Bremermann,Zhou}, we assume limited population growth by making the per capita reproduction rate depend on the density of population. We also consider the recovery of each infected class (see the last equation in \eqref{submodel2}). The fundamental  parameters of the system are:
\begin{itemize}
\item[$\bullet$] $r=b-d_0$ is  the intrinsic rate of natural increase, where $b$ is the birthrate and $d_0$ is the death rate of $S$-class,
\item[$\bullet$]
$K $ is the carrying capacity (see also the next section),
\item[$\bullet$]
$\rho_i$ is the recovery rate from each infected class ($i=1,2,3$),
\item[$\bullet$] $d_i $ is the death rate of each class,  $( i=1,2,3,4)$, where $d_3$ and $d_4$ correspond $I_{12}$ and $R$ respectively,
\item[$\bullet$]
$\mu_i=\rho_i+d_i, i=1,2,3.$

\item[$\bullet$]
$\alpha_1$, $\alpha_2$, $\alpha_3$  are the rates of transmission of strain 1, strain 2 and both strains (in the case of coinfection),
\item[$\bullet$]
$\gamma_i$ is the rate at which infected with one strain get infected with  the other strain and move to a coinfected class ($i=1,2$),

\item[$\bullet$]     $\eta_i$ is the rate at which infected from one strain getting  infection from a co-infected class $( i=1,2)$;
\end{itemize}

We only consider the case when the reproduction rate of susceptibles is not less than their death rate since we know that the population will go extinct in that case.
The system is considered under the natural initial conditions
$S(0)>0$, $I_1(0)\geq0$, $I_2(0)\geq0$, $I_{12}(0)\geq0$ and by $N=S+I_1+I_2+I_{12}+R.$ we denote the total population.


Since the variable $R$ is not present in the first four equations, without loss of generality, we may consider only the first four equations of system \eqref{submodel2}.
It is convenient to introduce the notation
\begin{equation}\label{sigma_i ordering}
\sigma_i:=\frac{\mu_i}{\alpha_i}, \qquad 1\le i\le 3.
\end{equation}
And similarly to the first part \cite{part1} we make the following assumption
\begin{equation}\label{assum}
\sigma_1< \sigma_2 < \sigma_3.
\end{equation}
We shall also assume that \eqref{submodel2} satisfies the non-degenerate condition
\begin{equation}\label{deltamu}
\Delta_\alpha=\left|
              \begin{array}{cc}
                \eta_1 & \alpha_1 \\
                \eta_2 & \alpha_2 \\
              \end{array}
            \right|=\eta_1\alpha_2-\eta_2\alpha_1\ne0.
\end{equation}
This condition have a natural biological explanation: the virus strains 1 and 2 have different (co)infections rates.
We use the notation
$$
\bar \gamma= \gamma_1+\gamma_2,\;\;\gamma=(\gamma_1,\gamma_2),
$$
and
\begin{eqnarray}
A_1&= \frac{\alpha_1\alpha_3}{r}(\sigma_3-\sigma_1),& \eta_1^*:= \frac{\eta_1}{A_1}\nonumber\label{A1}\\
 A_2&= \frac{\alpha_2\alpha_3}{r}(\sigma_3-\sigma_2), & \eta_2^*:=\frac{\eta_2}{A_2}\nonumber\label{A2}\\
 A_3&= \frac{\alpha_1\alpha_2}{r}(\sigma_2-\sigma_1), & \gamma^*:=\frac{\gamma_1}{A_3}\nonumber\label{A3}.
 \end{eqnarray}
Notice that by \eqref{assum} one has $A_1,A_2,A_3>0$.
 We have
  \begin{equation}\label{etaA123}
  \alpha_2A_1=\alpha_3A_3+
  \alpha_1A_2.
  \end{equation}
The  determinants $\Delta_\alpha$ and $\Delta_\mu=\eta_1\mu_2-\eta_2\mu_1$ are related to each other by
\begin{equation*}\label{deltamu1}
\Delta_\mu=\frac{\eta_1r}{\alpha_1}A_3+\sigma_1\Delta_\alpha=
\frac{\eta_2r}{\alpha_2}A_3+\sigma_2\Delta_\alpha,
\end{equation*}
hence $A_3>0$ implies
\begin{equation}\label{deltamu2}
\Delta_\mu>\sigma_1\Delta_\alpha \qquad \Delta_\mu>\sigma_2\Delta_\alpha.
\end{equation}
This implies an inequality which will be useful in the further analysis:
\begin{equation}\label{deltamu12}
\sigma_2(\Delta_\alpha+\gamma_2\alpha_3)<\Delta_\mu+\gamma_2\mu_3.
\end{equation}
 We shall also make use of the following relations:
 \begin{equation}\label{etaA13}
\begin{split}\eta_1^* -\eta_2^*&< \eta_1\frac{\alpha_2}{\alpha_1A_2}-\eta_2^*
= \frac{\Delta_\alpha}{\alpha_1A_2}.
\end{split}
\end{equation}
A consequence of \eqref{etaA13} and \eqref{deltamu2} is that for $\eta_1^*>\eta_2^*$ we have $\Delta_\alpha,\;\Delta_\mu>0$.
On the other hand, one has
\begin{eqnarray}\label{etaAetaA}
\eta_1^*- \eta_2^* =
\frac{(\Delta_\alpha\sigma_3-\Delta_\mu)\alpha_3}{rA_1A_2}
\end{eqnarray}

\subsection{The main result}
It is 
elementary to see that except for the trivial equilibrium state $G_1=(0,0,0,0)$ and the disease free equilibrium $G_2=(K, 0,0,0)$, there exist only $6$ possible \textit{types of equilibrium points} $G_3$, $G_4$, $G_5$, $G_6$, $G_7$, $G_8$ determined by their non-zero compartments (see Table~\ref{tab1} and Proposition~\ref{pro:equil} below for explicit representations).

\begin{table}[h]
  \centering
\begin{tabular}{c|c|c|c|c}
\quad Type \quad &\quad $S$&$I_1$&$I_2$&$I_{12}$\\\hline
$G_2$& $\star$ & $0$& $0$& $0$\\\hline
$G_3$& $\star$ & $\star$& $0$& $0$\\\hline
$G_4$& $\star$ & $0$& $\star$& $0$\\\hline
$G_5$& $\star$ & $0$& $0$& $\star$\\\hline
$G_6$& $\star$ & $\star$& $0$& $\star$\\\hline
$G_7$& $\star$ & $0$& $\star$& $\star$\\\hline
$G_8$& $\star$ & $\star$& $\star$& $\star$\\
\end{tabular}
  \caption{The  types of equilibrium states of \eqref{submodel2}, where $\star$ denotes a non-zero coordinate}\label{tab1}
\end{table}

More precisely, the equilibrium points $G_3$, $G_4$, $G_5$ have two non-zero components and represent points where only one of the diseases are present or where the diseases only exist together as coinfection. At the points $G_6,G_7$ one of the diseases are only present in coinfected individuals while the other disease also occurs as single infections. The point $G_8$ is the coexistence equilibrium were both types of single infection is present as well as coinfection. Our main results extends the results of \cite{SKTW18a} on the case of small values of $\gamma_i$. More precisely, we have only four possible scenarios of developing of a locally stable equilibrium point as a continuous function of increasing carrying capacity $K$:

\begin{theorem}\label{th:main}
Let all parameters $\alpha_i,\mu_i,\eta_i,\gamma_i$ of \eqref{submodel2} be fixed with $\bar\gamma$ sufficiently small. Then one has exactly one locally stable nonnegative equilibrium point depending on $K>0$. Furthermore, changing the carrying capacity $K$ from zero to infinity, the type of this locally stable equilibrium point changes according to one of the following alternative scenarios:
\begin{enumerate}[label=(\roman*),itemsep=0.1ex,leftmargin=0.8cm]
        \item \label{it1}$G_2\rightarrow G_3$;
        \item \label{it4}$G_2\rightarrow G_3\rightarrow G_6\rightarrow G_5$;
            \item \label{it2} $G_2 \rightarrow G_3\rightarrow G_6\rightarrow G_8\rightarrow G_7\rightarrow G_5$;
        \item \label{it3}$G_2\rightarrow G_3\rightarrow G_6\rightarrow G_8$.
    \end{enumerate}
\end{theorem}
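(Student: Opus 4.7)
The plan is to track the locally stable nonnegative equilibrium branch as $K$ increases from $0$ to $\infty$, identifying the thresholds at which the type changes and verifying that exactly one such equilibrium exists at every $K$. First I would catalogue the equilibrium types using Proposition~\ref{pro:equil}: for the boundary types $G_2,G_3,G_4,G_5$ the coordinates are given by explicit formulas (the associated subsystems are essentially linear), so the thresholds in $K$ at which each becomes feasible and the signs of the Jacobian eigenvalues transverse to the boundary can be read off directly. The ordering \eqref{assum}, namely $\sigma_1<\sigma_2<\sigma_3$, pins down the first threshold at $K=\sigma_1$, where $G_2$ loses stability and the branch enters $G_3$; and standard comparisons of the remaining thresholds rule out $G_4$ from appearing on the stable branch in any scenario.

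Next I would analyse the upper end of the $G_3$-segment. Linearising \eqref{submodel2} at $G_3$ shows that, as $K$ grows, the $I_{12}$-direction becomes unstable first (since $\sigma_3$ is the largest of the three), giving a transcritical bifurcation at which a new branch of type $G_6$ emerges. Because $G_6$ has no explicit closed form for $\bar\gamma>0$, the key device is a perturbation in $\bar\gamma$: at $\bar\gamma=0$ the coinfection equation decouples sufficiently that $G_6$ can be found by a one-dimensional continuation argument (as in the framework of \cite{SKTW18a}), and the implicit function theorem applied jointly to the equilibrium equations and to the characteristic polynomial produces a smooth continuation of the $G_6$-branch and of its eigenvalues for all sufficiently small $\bar\gamma$. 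Along this continuation one of two things happens first as $K$ grows: either the $I_1$-component reaches $0$ (entering $G_5$ via another transcritical bifurcation, scenario~\ref{it4}) or the $I_2$-component leaves $0$ and the branch enters $G_8$ (opening scenarios \ref{it2} and \ref{it3}); which one occurs is controlled by the sign quantities \eqref{etaA13} and \eqref{etaAetaA} coupled with \eqref{deltamu12}.

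The hard part is the interior branch $G_8$ and the subsequent transition to $G_7$ in scenario~\ref{it2}, because the coexistence equilibrium solves a genuinely nonlinear system. Here I would employ the bifurcation technique announced in the abstract: parametrise the branch by $K$, differentiate the equilibrium equations to obtain a linear system for $(S',I_1',I_2',I_{12}')$ in $K$, and use \eqref{etaA123} together with the non-degeneracy \eqref{deltamu} to invert the resulting matrix near the entry point where $I_2=0$. This yields transverse emergence of $G_8$ from $G_6$ in the direction of increasing $I_2$, and the smallness of $\bar\gamma$ guarantees that the four Jacobian eigenvalues perturb continuously from their $\bar\gamma=0$ values, so local stability is inherited at the start of the $G_8$-segment. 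Following the branch, either $I_1$ reaches $0$ again at some finite $K$ (giving the transition to $G_7$ and then to $G_5$ as in scenario \ref{it2}) or $I_1$ stays positive for all $K$ (scenario \ref{it3}); the dichotomy is again determined by the sign of $\eta_1^*-\eta_2^*$. Uniqueness of the stable nonnegative equilibrium at each $K$ then follows from the fact that the implicit-function continuation is global in $K$ for small $\bar\gamma$ and that the Jacobians at all other equilibrium types have a positive eigenvalue along this range.

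The main obstacle I anticipate is the stability transfer across the two internal bifurcation points bordering $G_8$. Verifying that no pair of eigenvalues of the $4\times 4$ Jacobian crosses the imaginary axis prematurely (before the next type change occurs) requires a careful Routh--Hurwitz analysis; this is precisely where the perturbative smallness of $\bar\gamma$ is essential, since the Hopf phenomena mentioned in the abstract appear only when $K\bar\gamma$ becomes large, and therefore lie outside the parameter regime considered in the theorem.
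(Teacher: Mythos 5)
Your roadmap (track the branch through transcritical bifurcations $G_2\to G_3\to G_6\to\dots$, perturb in $\bar\gamma$, distinguish scenarios by sign conditions on $\eta_1^*,\eta_2^*$) matches the paper's overall architecture, but the two steps you yourself identify as hard are exactly where your argument has real gaps. The central one is stability of the coexistence segment $G_8$. You propose that ``the four Jacobian eigenvalues perturb continuously from their $\bar\gamma=0$ values, so local stability is inherited.'' This fails: at $\bar\gamma=0$ the Jacobian at the interior point is $\mathrm{diag}(S,I_1,I_2,I_{12})M$ with $M$ antisymmetric except for the single entry $-r/K$, so one eigenvalue sits on (or arbitrarily near) the imaginary axis and continuity alone decides nothing about its side after perturbation. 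The paper's resolution has two ingredients you never invoke: (a) a uniform spectral-gap lemma showing that \emph{three} of the four eigenvalues of the unperturbed matrix have real part bounded away from zero on a compact parameter set, so Rouch\'e's theorem controls those three for small $\bar\gamma$; and (b) the identity $\det B=\tfrac{1}{I_{12}}\partial_S P(S)$, where $P$ is the explicit quadratic whose root is the $S$-coordinate of the coexistence point, which under the standing assumption $\partial_S P>0$ forces $\det J_8>0$ and hence pins the sign of the remaining eigenvalue. Your deferred ``careful Routh--Hurwitz analysis'' is precisely this missing mechanism, and without the polynomial $P$ and the determinant identity there is no obvious substitute. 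The same machinery (the quantity $\Theta$ and the exchange-of-stability expansion of the critical eigenvalue) is needed to get stability immediately after the bifurcation at $\hat G_6$, where the full Jacobian is genuinely singular.

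The second gap is the global statement. You assert that ``the implicit-function continuation is global in $K$'' so that the $G_8$ branch runs from its entry at $\hat G_6$ all the way to $\hat G_7$ (or to $K=\infty$), but local invertibility of the reduced linearization gives only a maximal open interval of existence; one must rule out the branch escaping to infinity, losing positivity elsewhere, or terminating at a different boundary point. The paper does this by computing $(B^{-1})_{11}$ and $(B^{-1})_{41}$ explicitly to prove $S(K)$ and $I_{12}(K)$ are strictly decreasing along the branch, which combined with boundedness and the ordering $\hat S_1>\hat S_2$ identifies the two endpoints, and this is also what delivers the uniqueness claim (every coexistence equilibrium lies on this one branch). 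Two smaller corrections: $G_6$ \emph{does} have an explicit closed form for all $\bar\gamma$ (since $I_2=0$ kills the $\bar\gamma I_1I_2$ term); it is $G_8$ that does not. And the dichotomy between scenarios (iii) and (iv) is governed by whether $\eta_2^*>1$ or $\eta_2^*<1$ (given $\eta_1^*>\eta_2^*$ and $\eta_1^*>1$), not by the sign of $\eta_1^*-\eta_2^*$, which is positive in both cases.
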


The first two scenarios are considered in our paper \cite{part1}.
In this paper we consider  the remained two scenarios, (iii) and (vi). These cases require a more nontrivial bifurcation analysis with application
of methods similar to the principle of the exchange of stability developed in
\cite{kielhofer2012introduction}, see also \cite{liu1994criterion} and \cite{part1} for recent applications in population analysis. In our
context, this require a delicate analysis of the inner equilibrium state $G_8$, as
well as a new bifurcation technique.

\section{Equilibrium points}\label{equipoint}

We note that the last equation in \eqref{submodel2} can be solved explicitly with respect to $R$:
\begin{equation*}
    R(t)=e^{-\mu'_4t}R(0)+\int_0^t e^{\mu'_4(\tau-t)}(\rho_1I_1+\rho_2I_2+\rho_3I_{12})(\tau) d\tau
\end{equation*}
therefore it suffices to study the dynamics of the first four equations in \eqref{submodel2}. If $I_1,I_2$ and $I_{12}$ have limits $\hat I_1$, $\hat I_2$ and $\hat I_{12}$ respectively as $t\rightarrow \infty$ then $R$ will have the limit
\begin{equation*}
    \hat R= \frac{\rho_1\hat I_1+\rho_2\hat I_2+ \rho_3 \hat I_{12}}{\mu_4}
\end{equation*}

Let us turn to the first four equations in \eqref{submodel2}.
The equilibrium points satisfy the following system
\begin{equation}\label{Equilib}
\begin{split}
(b(1-\frac{S}{K})-\alpha_1I_1-\alpha_2I_2-\alpha_3I_{12}-\mu_0)S=0,\\
(\alpha_1S - \eta_1I_{12}-\gamma_1I_2 - \mu_1)I_1=0,\\
(\alpha_2S - \eta_2I_{12}-\gamma_2I_1- \mu_2)I_2=0,\\
(\alpha_3S+ \eta_1I_1+\eta_2I_2-\mu_3)I_{12}+\overline{\gamma}I_1I_2=0.
\end{split}
\end{equation}
and in \cite{part1} we had the following proposition
\begin{proposition}\label{pro:equil}
Except for the trivial equilibrium $G_1=(0,0,0,0)$ and the disease free equilibrium $G_2=(K, 0,0,0)$ there exist only the following  equilibrium states:
\begin{eqnarray*}\label{subeqpt3}
G_3&=&\left(\sigma_1, \frac{r}{K\alpha_1}(K-\sigma_1), 0,0\right),\nonumber\\
\label{G4coor}
G_4&=&(\sigma_2,0,\frac{r}{K\alpha_2}(K-\sigma_2), 0),\nonumber\\
\label{subeqpt5}
G_5&=&(\sigma_3,0,0,\frac{r}{K\alpha_3}(K-\sigma_3)),\nonumber\\
\label{subeqpt6}\nonumber
G_6&=&(S^*,\frac{\alpha_3}{\eta_1}(\sigma_3-S^*),0,\frac{\alpha_1}{\eta_1}(S^*-\sigma_1)), \quad\text{where}\quad S^*=K(1-\frac{1}{\eta_1^*}), \,\,
\\
G_7&=&(S^*,0,\frac{\alpha_3}{\eta_2}(\sigma_3-S^*),\frac{\alpha_2}{\eta_2}(S^*-\sigma_2)), \quad\text{where}\quad
S^*=K(1-\frac{1}{\eta_2^*}),\,\,
\label{subeqpt7}\\
G_8&=&(S^*,I_1^*,I_2^*,I_{12}^*).\nonumber
\end{eqnarray*}
\end{proposition}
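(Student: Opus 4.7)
The plan is to perform a direct case analysis on the system \eqref{Equilib} according to which of the four compartments $S,I_1,I_2,I_{12}$ vanish at equilibrium. Each of the four equations has a factored structure, so any equilibrium is determined by choosing, for each equation, whether the first factor vanishes or (in the first three equations) the compartment variable itself vanishes. First I would rule out $S=0$: combined with the remaining equations and the natural nonnegativity, this forces the trivial state $G_1$, which the proposition excludes. Hence assume $S>0$; the first equation then reduces to the scalar relation $r(1-S/K)=\alpha_1I_1+\alpha_2I_2+\alpha_3I_{12}$ (using $r=b-\mu_0$), which will be used throughout as the closing equation.

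Next I would split into cases based on the support $\{i:I_i\neq0\}$. If $I_1=I_2=I_{12}=0$ the closing equation gives $S=K$, i.e.\ $G_2$. If exactly one of $I_1,I_2,I_{12}$ is nonzero, the corresponding bracket equation is linear in $S$ and yields $S=\sigma_1,\sigma_2,\sigma_3$ respectively; the closing equation then determines the single nonzero infected compartment, giving $G_3$, $G_4$, $G_5$ exactly as stated. A key observation at this stage is that the configuration $I_1,I_2>0$, $I_{12}=0$ is impossible: the fourth equation reduces to $\overline{\gamma}I_1I_2=0$, which contradicts $\overline{\gamma}>0$ (guaranteed by the biological hypotheses). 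This eliminates one of the would-be two-nonzero families.

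For the remaining two-nonzero cases I would treat $I_1,I_{12}>0$, $I_2=0$. The second and fourth equations then form a linear $2\times2$ system in $(I_1,I_{12})$, whose solution reads $I_{12}=\tfrac{\alpha_1}{\eta_1}(S-\sigma_1)$ and $I_1=\tfrac{\alpha_3}{\eta_1}(\sigma_3-S)$. Substituting into the closing equation and simplifying using the definition $A_1=\alpha_1\alpha_3(\sigma_3-\sigma_1)/r$ gives $1-S/K=1/\eta_1^*$, i.e.\ $S^*=K(1-1/\eta_1^*)$, which is exactly $G_6$. The symmetric treatment of $I_2,I_{12}>0$, $I_1=0$ yields $G_7$ in the same way.

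Finally, the case where all three infected compartments are nonzero defines $G_8$: the second, third and fourth equations become a nonlinear $3\times 3$ system in $(I_1,I_2,I_{12})$ parametrized by $S$, closed by the first equation, and in general cannot be inverted in closed form. I expect this to be the main obstacle: the proposition merely records the existence of such a state symbolically as $G_8=(S^*,I_1^*,I_2^*,I_{12}^*)$, and a genuine existence/uniqueness proof along the equilibrium branch will require the bifurcation-type analysis (perturbing from $G_6$ or $G_7$ at $\bar\gamma=0$ and using the principle of exchange of stability as referenced after Theorem~\ref{th:main}). At the level of this proposition, however, it suffices to note that any equilibrium with all infected compartments positive must satisfy the four bracket equations simultaneously, and to label such a state $G_8$, leaving its detailed analysis to the later sections.
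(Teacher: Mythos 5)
Your case analysis is correct and is essentially the canonical argument: the paper itself does not reprove this proposition but imports it from \cite{part1}, where the proof proceeds by exactly this enumeration of vanishing compartments, including the elimination of the configuration $I_1,I_2>0$, $I_{12}=0$ via $\overline{\gamma}I_1I_2\neq 0$ and the deferral of the explicit form of $G_8$ to the quadratic $P(S)=0$ treated later. No gaps to report.
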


All required information about equilibrium points $G_1$ -- $ G_6$ can be found in \cite{part1}. Here we take only points $G_8$ and $G_7$. To highlight the dependence of the equilibrium points on $K$ we will write sometimes $G_j(K)$.

\subsection{Coexistence equilibria}
The coordinates of coexistence equilibrium points satisfy
\begin{equation}\label{Equilib1}
\left\{
\begin{split}
r(1-\frac{S}{K})-\alpha_1I_1-\alpha_2I_2-\alpha_3I_{12}=0,\\
\alpha_1S - \eta_1I_{12}-\gamma_1I_2 - \mu_1=0,\\
\alpha_2S - \eta_2I_{12}-\gamma_2I_1- \mu_2=0,\\
\alpha_3S+\eta_1I_1+\eta_2I_2-\mu_3+\frac{\overline{\gamma}I_1I_2}{I_{12}}=0
\end{split}
\right.
\end{equation}
Furthermore, as it is shown in \cite{part1}, Sect. 3.2, the $S$ coordinate of an inner equilibrium point (coexistence equilibrium) satisfies
$P(S)=0$
where
\begin{equation*}\label{Peq2}
P(S):=P_{\gamma,K}(S):=
\begin{vmatrix}
\mu_1 & \mu_2 & \mu_3 & \frac{r}{ K}(S- K)S \\
\alpha_1 & \alpha_2 & \alpha_3 & \frac{r}{ K}(S- K) \\
0 & \gamma_1 & \eta_1 & \mu_1-\alpha_1S \\
\gamma_2 & 0 & \eta_2 & \mu_2-\alpha_2S \\
\end{vmatrix}.
\end{equation*}
One can verify that
\begin{equation*}\label{p0p1}
P(S)=p_2S^2+p_1S+p_0,
\end{equation*}
where
\begin{eqnarray*}
&&p_0=r(-A_3\Delta_\mu-
\theta + \gamma_1\mu_2A_1+\gamma_2\mu_1A_2),
\\
&&p_1=r(A_3\Delta_\alpha+
\frac{\theta}{ K}+\rho-\gamma_1\alpha_2A_1-\gamma_2\alpha_1A_2),
\\
&&p_2=-\frac{r}{ K}\rho.
\end{eqnarray*}
Here
\begin{eqnarray*}\label{rho}
&&\rho:=
\begin{vmatrix}
\alpha_1 & \alpha_2 & \alpha_3  \\
0 & \gamma_1 & \eta_1  \\
\gamma_2 & 0 & \eta_2 \\
\end{vmatrix}=
\gamma_1
\alpha_1\eta_2+\gamma_2
\alpha_2 \eta_1 -\gamma_1
\gamma_2\alpha_3,\\
\label{theta}
&&\theta:=
\begin{vmatrix}
\mu_1 & \mu_2 & \mu_3  \\
0 & \gamma_1 & \eta_1  \\
\gamma_2 & 0 & \eta_2 \\
\end{vmatrix}=
\gamma_1
\mu_1\eta_2+\gamma_2
\mu_2 \eta_1 -\gamma_1
\gamma_2\mu_3,
\end{eqnarray*}
If the $S$ component is known the other components can easily be solved from the linear system of equations that results from the first three equations of \eqref{Equilib}.
\newline \newline
Let us introduce the Jacobian matrix of the right hand side of \eqref{submodel2}, with the redundant last row removed, evaluated at an inner equilibrium point $G_8=(S,I_1,I_2,I_{12})$:
\begin{equation*}\label{eq: J_8 expressed with B}
J_8={\rm diag}(S,I_1,I_2,I_{12})B,\;\;\;B=\left(\begin{matrix}
-\frac{r}{ K} & -\alpha_1 & -\alpha_2 & -\alpha_3  \\
\alpha_1 & 0 & -\gamma_1 & -\eta_1 \\
\alpha_2 & -\gamma_2 & 0 & -\eta_2 \\
\alpha_3 & \eta_1+\overline{\gamma}r_2 & \eta_2+\overline{\gamma}r_1 & -\overline{\gamma}r_1r_2
\end{matrix}
\right),
\end{equation*}
where
\begin{equation}\label{r1r2}
r_1=\frac{I_1}{I_{12}},\;\;\;r_2=\frac{I_2}{I_{12}}.
\end{equation}
Adding the first three rows of $J_8$ to its last row one obtains applying systematically \eqref{Equilib} that
\begin{align*}
\det B&=\frac{\det(J_8)}{SI_1I_2I_{12}}
=\frac{1}{I_{12}}\begin{vmatrix}
\frac{r}{ K} & \alpha_1 & \alpha_2 & \alpha_3 \\
-\alpha_1 & 0 & \gamma_1 & \eta_1 \\
-\alpha_2 & \gamma_2 & 0 & \eta_2 \\
\frac{r}{ K}(2S- K) & \mu_1 & \mu_2 & \mu_3
\end{vmatrix}
=\frac{1}{I_{12}}\begin{vmatrix}
 \mu_1 & \mu_2 & \mu_3 & \frac{r}{K}(2S- K)
\\
\alpha_1 & \alpha_2 & \alpha_3  & \frac{r}{ K}  \\
 0 & \gamma_1 & \eta_1 & -\alpha_1 \\
\gamma_2 & 0 & \eta_2 & -\alpha_2
\end{vmatrix}\\
&=\frac{1}{I_{12}}\,\frac{\partial P(S)}{\partial S}.
\end{align*}
The last equality is verified directly by using the definition of $P(S)$.
This implies an important property
\begin{equation}\label{May1a}
\det B=\frac{1}{I_{12}}\frac{\partial P(S)}{\partial S}.
\end{equation}

We assume that  
 \begin{equation}\label{May1aa}
\frac{\partial P_{\gamma,K}(S)}{\partial S}>0\;\;\mbox{for any coexistence equilibrium point}.
\end{equation}
\begin{remark}
Inequality \eqref{May1aa} together with (\ref{May1a}) implies, in particular, that the Jacobian matrix is invertible at every coexistence equilibrium and so there exist a curve $G(K)$ through this point, parameterized by $K$ and consisting of equilibrium points satisfying (\ref{May1aa}). Moreover (\ref{May1aa}) implies that the product of all eigenvalues of the Jacobian matrix at a coexistence eq. point is positive which is in agreement with the local stability of the corresponding equilibrium point. By Lemma \ref{lem:contG_8} and \eqref{etaA13} we have that $\Delta_\alpha>0$ if the condition (\ref{May1aa}) is valid and the set of coexistence equilibria is non empty. Then
since
 $$
 \frac{\partial P(S)}{\partial S}=\alpha_1\alpha_2(\sigma_2-\sigma_1)\Delta_\alpha+O(\bar{\gamma}),
 $$
inequality 	(\ref{May1aa}) is true for small $\bar{\gamma}$.
\end{remark}

\begin{lemma}\label{lemma: interval properties og G_8} Let $G(K)=(S(K),I_1(K),I_2(K),I_{12}(K))$ be a curve consisting of coexistence equilibrium points satisfying (\ref{May1aa}). Let also $(K_1,K_2)$ be the maximal interval of existence of such curve. Then

(i) $\frac{\partial S}{\partial K}<0$ and $\frac{\partial I_{12}}{\partial K}<0$ for $K\in (K_1,K_2)$.

(ii) $K_1\geq\sigma_1$ and there exists the limit $\lim_{K\to K_1}G(K)$ which is an equilibrium point with at least one zero component.

(iii) if $K_2<\infty$ then there exists the limit $\lim_{K\to K_2}G(K)$ which is an equilibrium point with at least one zero component.

(iv) if $K=\infty$ then there is a limit $\lim_{K\to \infty}G(K)$ which is an equilibrium point of the limit system ($K=\infty$).
\end{lemma}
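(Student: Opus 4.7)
The common feature behind all four claims is that the carrying capacity $K$ enters the equilibrium system \eqref{Equilib1} only through the term $-rS/K$ in the first equation. For (i), I would differentiate \eqref{Equilib1} with respect to $K$ along the curve to obtain the linear system $B\,(\partial_K S,\partial_K I_1,\partial_K I_2,\partial_K I_{12})^{\mathsf T}=(-rS/K^2,0,0,0)^{\mathsf T}$, where $B$ is the matrix appearing in the factorization of $J_8$. By \eqref{May1a} together with the standing assumption \eqref{May1aa} we have $\det B=I_{12}^{-1}\,\partial_S P>0$, so Cramer's rule applies. For $\partial_K S$ one replaces the first column of $B$ by the right-hand side and expands along the resulting column of zeros; the surviving $3\times3$ minor factors cleanly as $\bar\gamma(\eta_1+\gamma_1 r_2)(\eta_2+\gamma_2 r_1)$, which is positive, multiplied by $-rS/K^2<0$, giving $\partial_K S<0$. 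The analogous computation for $\partial_K I_{12}$ produces a $3\times3$ minor equal to a sum of terms of the forms $-\alpha_i\gamma_j\eta_j$, $-\alpha_1\gamma_2\bar\gamma r_1$, $-\alpha_2\gamma_1\bar\gamma r_2$ and $-\alpha_3\gamma_1\gamma_2$, which is non-positive with at least one strictly negative contribution, giving $\partial_K I_{12}<0$.

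For (ii), the second equation of \eqref{Equilib1} forces $\alpha_1 S=\mu_1+\eta_1 I_{12}+\gamma_1 I_2\ge\mu_1$, so $S\ge\sigma_1$, while the first equation combined with $I_j\ge 0$ forces $S\le K$. Hence $K\ge\sigma_1$ along the entire curve, in particular $K_1\ge\sigma_1$. By (i), $S(K)$ and $I_{12}(K)$ are monotone in $K$, and they are bounded ($\sigma_1\le S\le K$ and $0\le I_{12}\le r/\alpha_3$, the latter read off the first equation), hence both have finite limits as $K\to K_1^+$. The remaining components $I_1(K),I_2(K)$ are then recovered as algebraic functions of $S$, $I_{12}$ and $K$ from the second, third and first equations of \eqref{Equilib1}, so they also converge; call the limit vector $G_{K_1}$. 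If every component of $G_{K_1}$ were strictly positive, then $G_{K_1}$ itself would be a coexistence equilibrium point, by continuity \eqref{May1aa} would persist at $G_{K_1}$, so $B$ would remain invertible there, and the implicit function theorem applied to \eqref{Equilib1} would continue the curve into a two-sided neighborhood of $K_1$, contradicting the maximality of $(K_1,K_2)$. Therefore at least one component of $G_{K_1}$ must vanish.

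Claim (iii) is proved by the identical monotonicity, boundedness and implicit function theorem argument applied at the endpoint $K_2$ when $K_2<\infty$. For (iv), when $K_2=\infty$, the monotonicity and boundedness of $S$ and $I_{12}$ on $[K_1,\infty)$ again force convergence, and the same algebraic recovery gives convergence of $I_1$ and $I_2$; but now $rS/K\to 0$, so the limit satisfies the equilibrium equations of the limit system obtained from \eqref{submodel2} by dropping the logistic term (equivalently, by replacing $r(1-S/K)$ with $r$ in the first equation). I expect the main technical obstacle to be the sign verification in (i): once the appropriate $3\times3$ cofactors have been identified and their factorizations worked out, the rest of the argument is a routine combination of monotonicity, boundedness, algebraic elimination, and the implicit function theorem.
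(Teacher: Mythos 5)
Your proof follows the paper's argument almost exactly: the same differentiation of \eqref{Equilib1} yielding $B\,\dot G=(-rS/K^2,0,0,0)^{\mathsf T}$, the same cofactor sign analysis (your factorization $\bar\gamma(\eta_1+\gamma_1 r_2)(\eta_2+\gamma_2 r_1)$ of the $(1,1)$-minor is correct and agrees with the paper's expanded expression, and your description of the minor for $\dot I_{12}$ matches as well), the same monotonicity-plus-boundedness limit argument with $I_1,I_2$ recovered algebraically from the second and third equations, and the same continuation-past-the-maximal-interval argument for why the limit must have a zero component. The one point where you diverge is the bound $K_1\ge\sigma_1$: the paper cites the global stability of $G_2$ for $K\in(0,\sigma_1)$ from Part 1, whereas you derive $\sigma_1\le S<K$ directly from the second and first equilibrium equations — a more self-contained route to the same conclusion.
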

\begin{proof}
Differentiating \eqref{Equilib1} with respect to $K$, we get
\begin{equation*}\label{Bmatrix}
    B\left(\begin{matrix}
    \dot{S}
    \\
    \dot{I}_1
    \\
    \dot{I}_2
    \\
    \dot{I}_{12}
    \end{matrix}\right)
    =
    \left(\begin{matrix}
   -\frac{rS}{K^2}
 \\
 0
 \\
 0
 \\
 0
    \end{matrix}\right).
 \end{equation*}
Therefore
$$
\dot{S}=-(B^{-1})_{11}\frac{rS}{K^2}=-\frac{rSI_{12}}{K^2\frac{\partial P(S)}{\partial S}}\cdot (\gamma_1\gamma_2\overline{\gamma}r_1r_2
+\gamma_1\eta_2(\eta_1+\overline{\gamma}r_2)+\eta_1\gamma_2(\eta_2+\overline{\gamma}r_1))
$$
and
$$
\dot{I}_{12}=-(B^{-1})_{41}\frac{rS}{ K^2}=-\frac{rSI_{12}}{ K^2\frac{\partial P(S)}{\partial S}}\cdot ({\alpha_1\gamma_2(\eta_2+\overline{\gamma}r_1)+\gamma_1\alpha_2(\eta_1+\overline{\gamma}r_2)+\gamma_1\gamma_2\alpha_3}).
$$
which proves (i).

To prove (ii) we note first that the equilibrium point $G_2$ is globally stable for $K\in (0,\sigma_1)$ according to \cite{part1}, Proposition 2, and therefore
$K_1\geq\sigma_1$. Next, since $S$ and $I_{12}$ components are monotone according to (i),  and bounded there is a limit
$$
S^{(1)}=\lim_{K\to K_1}S(K)\;\;\mbox{and}\;\;I_{12}^{(1)}=\lim_{K\to K_1}I_{12}(K).
$$
The $I_1$ and $I_2$ components satisfying equations
\begin{eqnarray*}
&&\gamma_1I_2=\alpha_1S - \eta_1I_{12} - \mu_1,\\
&&\gamma_2I_1=\alpha_2S - \eta_2I_{12}- \mu_2,
\end{eqnarray*}
which implies convergence of these components to $I_{1}^{(1)}$ and $I_{2}^{(1)}$ respectively as $K\to K_1$. Clearly
$G^{(1)}=(S^{(1)},I_{1}^{(1)},I_{2}^{(1)},I_{12}^{(1)})$ is an equilibrium point which must be on the boundary of the positive octant, otherwise one can continue the branch $G(K)$ outside the maximal interval of existence. This argument proves (ii). Proof of (iii) and (iv) are the same up to  some small changes as the proof of (ii).

\end{proof}

To exclude from our analysis the equilibrium point $G_4$ we will require in this text that
\begin{equation}\label{M3a}
\gamma^*<1.
\end{equation}
Under this condition $G_4$ is always unstable. Since we are interested only in locally stable equilibrium point the point $G_4$ will not appear in our forthcoming analysis.

\subsection{The equilibrium state $G_7$}\label{section the equilibrium state G7}
Let us consider the equilibrium point $G_7$. The components are given by proposition \ref{pro:equil} as
\begin{align*}
G_7&=(S^*,0,I_2^*,I_{12}^*),\\
S^*&=K(1-\frac{1}{\eta_2^*}),\\
I_2^*&=\frac{\alpha_3}{\eta_2}(\sigma_3-S^*),\\
I_{12}^*&=\frac{\alpha_2}{\eta_2}(S^*-\sigma_2).
\end{align*}
This point has type $G_7$ (i.e. three positive components) if and only if
\begin{equation*}\label{G7sstar}
\sigma_2<S^*<\sigma_3\quad\text{and}\quad \eta_2^*>1,
\end{equation*}
where the first relation  is equivalent to
\begin{equation}\label{Au11a}
\frac{\sigma_2\eta_2^*}{\eta_2^*-1}<K<\frac{\sigma_3\eta_2^*}{\eta_2^*-1}.
\end{equation}
Similarly to above we find the Jacobian matrix evaluated at $G_7$ as
$$
J_7=
\begin{bmatrix}
-r\frac{S^*}{K} & -\alpha_1S^* & -\alpha_2S ^*& -\alpha_3S^*  \\
0 & \alpha_1S^*-\eta_1I_{12}^*-\gamma_1I_2^*-\mu_1 & 0 & 0 \\
\alpha_2I_2^* & -\gamma_2I_2^* & 0 & -\eta_2I_2^* \\
\alpha_3I_{12}^* & \eta_1I_{12}^*+\overline{\gamma}I_2^*& \eta_2I_{12}^* & 0
\end{bmatrix}.
$$
where $S,I_2,I_{12}$ are given by Proposition~\ref{pro:equil}.
Since  the submatrix
\begin{equation*}\label{JII7}
\tilde J=
\begin{bmatrix}
-r\frac{S^*}{K} & -\alpha_2S^* & -\alpha_3S^*  \\
\alpha_2I_2^* & 0 & -\eta_2I_2^* \\
\alpha_3I_{12}^* & \eta_2I_{12}^* & 0
\end{bmatrix}
=
\begin{bmatrix}
S^* & 0 & 0 \\
0 & I_2^* & 0 \\
0 & 0 & I_{12}^*
\end{bmatrix}
\begin{bmatrix}
-\frac{r}{K} & -\alpha_2 & -\alpha_3  \\
\alpha_2 & 0 & -\eta_2 \\
\alpha_3 & \eta_2 & 0
\end{bmatrix},
\end{equation*}
is stable by Routh-Hurwitz criteria, we conclude that the matrix $J_7$ is stable whenever
\begin{equation}\label{stabcond7.1}
\alpha_1S^*-\eta_1I_{12}^*-\gamma_1I_2^*-\mu_1 <0.
\end{equation}
Using Proposition~\ref{pro:equil}, we can rewrite \eqref{stabcond7.1} as
\begin{equation}\label{equation for stability of G_7 before defining S_2}
 S^*(\Delta_\alpha-\gamma_1\alpha_3)>\Delta_\mu-\gamma_1\mu_3.
\end{equation}
If $\Delta_\alpha-\gamma_1\alpha_3=0$ then the linear stability holds whenever $\Delta_\mu-\gamma_1\mu_3<0$.
For $\Delta_\alpha-\gamma_1\alpha_3\neq 0$, let us define
\begin{equation*}\label{S2}
    \hat S_2=\frac{\Delta_\mu-\gamma_1\mu_3}{\Delta_\alpha-\gamma_1\alpha_3}\;\;\mbox{and}\;\;\hat{K}_2=\hat {S}_2\frac{\eta_2^*}{\eta_2^*-1}
\end{equation*}
then \eqref{equation for stability of G_7 before defining S_2} can be written
\begin{equation*}
\left\{\begin{split}
        S^*>\hat S_2\text{ if }\Delta_\alpha-\gamma_1\alpha_3>0
        \\
        S^*<\hat S_2\text{ if }\Delta_\alpha-\gamma_1\alpha_3<0
\end{split}\right.
\end{equation*}


It can be verified also that
\begin{align}
\hat S_2-\sigma_1 &=\frac{rA_1A_3(\eta_1^*-\gamma^*)}{(\Delta_\alpha-\gamma_1\alpha_3)\alpha_1}\nonumber\\
\label{S2sigma2}
    \hat S_2-\sigma_2 &=\frac{rA_2A_3(\eta_2^*-\gamma^*)}{(\Delta_\alpha-\gamma_1\alpha_3)\alpha_2}\\
   \hat S_2-\sigma_3 &=\frac{rA_1A_2(\eta_2^*-\eta_1^*)}{(\Delta_\alpha-\gamma_1\alpha_3)\alpha_3}\nonumber
   \end{align}
This readily yields the (local) stability criterion:

\begin{proposition}
\label{pro:G7}
The equilibrium point $G_7$ is nonnegative and locally stable if and only if $\eta_2^*>1$ and exactly one of the following conditions holds:
\begin{enumerate}[label=(\roman*),itemsep=0.4ex,leftmargin=0.8cm]

\item\label{G7-1}
 $\frac{\sigma_2\eta_2^*}{\eta_2^*-1}<K<\frac{\min(\hat S_2,\sigma_3)\eta_2^*}{\eta_2^*-1}$ when $\Delta_\alpha-\gamma_1\alpha_3<0$ and $\eta_2^*<\gamma^*$,
\item\label{G7-2}
 $\frac{\max(\hat S_2,\sigma_2)\eta_2^*}{\eta_2^*-1}<K<\frac{\sigma_3\eta_2}{\eta_2-1}$  when $\Delta_\alpha-\gamma_1\alpha_3>0$ and  $\eta_1^*>\eta_2^*$,
 \item\label{G7-3} $K$ subject to \rm{(\ref{Au11a})} when $\Delta_\alpha-\gamma_1\alpha_3=0$ and $\Delta_\mu-\gamma_1\mu_3<0$
    \end{enumerate}
    \end{proposition}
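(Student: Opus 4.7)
The plan is to unpack the proposition in two halves: first establishing the nonnegativity range for $K$ from the explicit formulas of Proposition~\ref{pro:equil}, and then reducing linear stability to a single scalar inequality whose case analysis produces the three alternatives.

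For nonnegativity, the coordinates $S^*=K(1-1/\eta_2^*)$, $I_2^*=(\alpha_3/\eta_2)(\sigma_3-S^*)$, $I_{12}^*=(\alpha_2/\eta_2)(S^*-\sigma_2)$ are positive precisely when $\eta_2^*>1$ and $\sigma_2<S^*<\sigma_3$. Substituting $S^*=K(\eta_2^*-1)/\eta_2^*$ turns the latter into \eqref{Au11a}, which is the common range of $K$ appearing in all three cases. For stability, the block structure of $J_7$ already computed in the text shows that the Jacobian is block upper--triangular (after rearranging rows/columns), with the $3\times 3$ block $\tilde J$ governing the $(S,I_2,I_{12})$ dynamics and a scalar $1\times 1$ block equal to $\alpha_1S^*-\eta_1I_{12}^*-\gamma_1I_2^*-\mu_1$. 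Since $\tilde J$ is already known to be Routh--Hurwitz stable, local asymptotic stability of $G_7$ is equivalent to negativity of this scalar entry, which after substituting the expressions for $I_2^*,I_{12}^*$ becomes exactly \eqref{equation for stability of G_7 before defining S_2}.

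Next I would split on the sign of the coefficient $\Delta_\alpha-\gamma_1\alpha_3$. When this coefficient vanishes, \eqref{equation for stability of G_7 before defining S_2} collapses to $\Delta_\mu-\gamma_1\mu_3<0$, which contains no $K$-dependence, and combining with \eqref{Au11a} yields case \ref{G7-3}. When the coefficient is nonzero, the stability condition becomes either $S^*>\hat S_2$ or $S^*<\hat S_2$, and has to be intersected with the nonnegativity window $\sigma_2<S^*<\sigma_3$. This is where the identities \eqref{S2sigma2} are essential: they tell us where $\hat S_2$ sits relative to $\sigma_2$ and $\sigma_3$, and hence when the stability region has nonempty intersection with the nonnegativity range. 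In the case $\Delta_\alpha-\gamma_1\alpha_3<0$ we need $S^*<\hat S_2$, so we require $\hat S_2>\sigma_2$; by the middle identity in \eqref{S2sigma2}, having negative denominator forces the numerator to be negative too, i.e.\ $\eta_2^*<\gamma^*$, producing case \ref{G7-1}. Symmetrically, when $\Delta_\alpha-\gamma_1\alpha_3>0$ we need $S^*>\hat S_2$, so we require $\hat S_2<\sigma_3$; the third identity in \eqref{S2sigma2} then forces $\eta_2^*<\eta_1^*$, producing case \ref{G7-2}.

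Finally I would convert the $S^*$ bounds into $K$ bounds through the linear map $K\mapsto S^*=K(\eta_2^*-1)/\eta_2^*$, which preserves the direction of inequalities because $\eta_2^*>1$. The resulting thresholds $\sigma_j\eta_2^*/(\eta_2^*-1)$ and $\hat S_2 \eta_2^*/(\eta_2^*-1)=\hat K_2$ are exactly the bounds stated in \ref{G7-1}--\ref{G7-3}, with $\min(\hat S_2,\sigma_3)$ and $\max(\hat S_2,\sigma_2)$ arising naturally as the intersection of the stability half-line with the nonnegativity interval. The main technical obstacle I anticipate is bookkeeping the sign conventions in the case analysis: one has to be careful that the three cases are not only sufficient but mutually exclusive and together capture every possibility (including checking that cases like $\Delta_\alpha-\gamma_1\alpha_3>0$ with $\eta_1^*\le\eta_2^*$ actually give an empty stability region, so that they are correctly excluded rather than overlooked). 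Once the identities \eqref{S2sigma2} are used to organize the signs this is straightforward, but it is the one place where a careless argument would lose the "exactly one" clause.
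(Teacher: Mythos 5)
Your proposal is correct and follows essentially the same route as the paper: read off nonnegativity from the explicit coordinates (giving \eqref{Au11a}), use the factorization of $J_7$ into the Routh--Hurwitz-stable block $\tilde J$ and the scalar eigenvalue $\alpha_1S^*-\eta_1I_{12}^*-\gamma_1I_2^*-\mu_1$, reduce stability to \eqref{equation for stability of G_7 before defining S_2}, and then split on the sign of $\Delta_\alpha-\gamma_1\alpha_3$ using the identities \eqref{S2sigma2} to place $\hat S_2$ relative to $\sigma_2$ and $\sigma_3$. The paper leaves the final case bookkeeping implicit ("this readily yields"), and your write-up supplies exactly those missing details, including the nonemptiness checks that justify the side conditions $\eta_2^*<\gamma^*$ and $\eta_1^*>\eta_2^*$.
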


By \eqref{S2sigma2} we get that for small values of $\gamma_1$ one has the following refinement of the above proposition.

    \begin{corollary}\label{cor:G7stable}
    Let $\eta_2^*>1$ and
    \begin{equation}\label{gamma1le}
    0\le \gamma^*< \eta_2^*.
    \end{equation}
    Then the equilibrium point $G_7$ is nonnegative and linearly stable  if and only if

    (i) $\eta_1^*>\eta_2^*>1$ and $\frac{\hat{S}_2\eta_2^*}{\eta_2^*-1}<K<\frac{\sigma_3\eta_2^*}{\eta_2^*-1}$,

    or

    (ii) $K$ subject to \rm{(\ref{Au11a})}, $\Delta_\alpha-\gamma_1\alpha_3=0$ and $\Delta_\mu-\gamma_1\mu_3<0$.

    Therefore the bifurcation point $\hat{K}_2$ appears here only in the case (i).
    \end{corollary}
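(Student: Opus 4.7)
The corollary is a specialization of Proposition~\ref{pro:G7} under the added hypothesis \eqref{gamma1le}, and the plan is to derive it by examining each of the three mutually exclusive subcases \ref{G7-1}--\ref{G7-3} of the proposition in turn. The upshot is that subcase \ref{G7-1} is excluded outright, subcase \ref{G7-2} collapses to case (i) of the corollary, and subcase \ref{G7-3} is already stated as case (ii) of the corollary.

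First, I would immediately rule out subcase \ref{G7-1}: it demands $\eta_2^*<\gamma^*$, which is incompatible with \eqref{gamma1le}. Hence under the corollary's hypothesis, stability of $G_7$ can only come from \ref{G7-2} or \ref{G7-3}. For the reduction of \ref{G7-2} the identities \eqref{S2sigma2} are decisive. Assuming $\Delta_\alpha-\gamma_1\alpha_3>0$ and $\eta_1^*>\eta_2^*$, I would use the second identity in \eqref{S2sigma2}, together with positivity of $r,A_2,A_3,\alpha_2$ guaranteed by \eqref{assum} and with $\eta_2^*-\gamma^*>0$ from \eqref{gamma1le}, to conclude $\hat S_2>\sigma_2$, so that $\max(\hat S_2,\sigma_2)=\hat S_2$. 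Applying the third identity in \eqref{S2sigma2} with $\eta_1^*>\eta_2^*$ similarly yields $\hat S_2<\sigma_3$, making the $K$-interval nonempty and matching exactly the interval in part (i) of the corollary. Conversely, if part (i) of the corollary is assumed, the well-definedness of $\hat S_2$ already forces $\Delta_\alpha-\gamma_1\alpha_3\ne 0$, and nonemptiness of the interval $\hat S_2<\sigma_3$ together with $\eta_1^*>\eta_2^*$ and the third identity in \eqref{S2sigma2} pins down the sign $\Delta_\alpha-\gamma_1\alpha_3>0$; this places us inside subcase \ref{G7-2} of the proposition and delivers stability. Finally, subcase \ref{G7-3} of the proposition is verbatim case (ii) of the corollary, so no further work is required there.

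For the concluding remark about the bifurcation point $\hat K_2$, I would simply observe that $\hat K_2=\hat S_2\,\eta_2^*/(\eta_2^*-1)$ is undefined precisely when $\Delta_\alpha-\gamma_1\alpha_3=0$, so $\hat K_2$ can only feature in case (i). There is no serious obstacle in the argument: the only subtlety is the sign bookkeeping in \eqref{S2sigma2}, which is what translates the proposition's $\max(\hat S_2,\sigma_2)$ into the corollary's cleaner endpoint $\hat S_2$ and at the same time ensures the $K$-interval in case (i) is nonempty.
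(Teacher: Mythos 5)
Your proof is correct and follows essentially the same route as the paper's: discard subcase \ref{G7-1} of Proposition~\ref{pro:G7} using \eqref{gamma1le}, identify subcase \ref{G7-3} with case (ii), and reduce subcase \ref{G7-2} to case (i) via the sign identities \eqref{S2sigma2}. The only (harmless) divergence is in how $\Delta_\alpha-\gamma_1\alpha_3>0$ is recovered in the converse direction: you infer it from nonemptiness of the $K$-interval through the third identity in \eqref{S2sigma2}, whereas the paper derives it directly from $\eta_1^*>\eta_2^*>1$ and $\gamma^*<\eta_2^*$ by writing $\Delta_\alpha-\gamma_1\alpha_3=\eta_1^*A_1\alpha_2-\eta_2^*A_2\alpha_1-\gamma^*A_3\alpha_3>\eta_1^*(A_1\alpha_2-A_2\alpha_1-A_3\alpha_3)=0$ using \eqref{etaA123}; both arguments are valid.
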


\begin{proof} By the made assumption, the case \ref{G7-1} in Proposition~\ref{pro:G7} is impossible. So it is sufficient to prove (i).
 It is thus required that $\eta^*_1>\eta_2^*>1$. If $\eta^*_1>\eta_2^*>1$ then
\begin{align*}
&\Delta_\alpha-\gamma_1\alpha_3=\eta_1^*A_1\alpha_2-\eta^*_2A_2\alpha_1-\gamma^*A_3\alpha_3
\\
&>\eta_1^*(
A_1\alpha_2-A_2\alpha_1-A_3\alpha_3)=0
\end{align*}
where we used equation $(\ref{etaA123})$ in the last equality.
Furthermore, since \eqref{gamma1le} and $\Delta_\alpha-\gamma_1\alpha_3>0$ holds for this case we also obtain from \eqref{S2sigma2} that $ \hat{S}_2-\sigma_2>0$, therefore $\max(\hat S_2,\sigma_2)=\hat{S}_2$, and we arrive at the desired conclusion.
\end{proof}

In what follows we will assume that
\begin{equation}\label{Ko1}
\gamma^*<1\;\;\mbox{and}\;\;\gamma_1<\alpha_3^{-1}\Delta_\alpha.
\end{equation}
We note that the first inequality guarantees (\ref{gamma1le}) since the equilibrium point $G_7$ exists only if $\eta_2^*>1$.

\section{Branches of coexistence equilibrium points}

\subsection{Bifurcation of $G_6$}\label{sec:g6}

From \cite{part1} we know that the equilibrium point  $G_6$ with the only zero component $I_2$ has the form
\begin{equation}\label{equation G_6 coordinates}
    G_6=(S^*,\frac{\alpha_3}{\eta_1}(\sigma_3-S^*),0,\frac{\alpha_1}{\eta_1}(S^*-\sigma_1))
\end{equation}
where
$$
S^*=K(1-\frac{1}{\eta^*_1}).
$$
We also know that it has positive components (except $I_2$) when $\eta^*_1>1$ and
\begin{equation*}\label{intervalforG_6}
    \sigma_1<S^*<\sigma_3\;\;\mbox{or equivalently}\;\;\frac{\sigma_1\eta^*_1}{\eta_1^*-1}< K<\frac{\sigma_3\eta^*_1}{\eta_1^*-1}
\end{equation*}
The bifurcation point (the point where the Jacobian is zero) corresponds to
\begin{equation}\label{Apr30b}
K=\hat K_1=\frac{\Delta_\mu+\mu_3\gamma_2}{\Delta_\alpha+\alpha_3\gamma_2}\frac{\eta_1^*}{\eta_1^*-1}\;\;\mbox{and}\;\;
S^{*}=\hat S_{1}=\frac{\Delta_\mu+\mu_3\gamma_2}{\Delta_\alpha+\alpha_3\mu_2}.
\end{equation}
and is denoted $    \hat G_6=G(\hat K_1)$.

Stability analysis of $G_6$ is given in the next proposition
\begin{proposition}
\label{pro:G6}
The equilibrium point $G_6$ is nonnegative if $\eta_1^*>1$ and it is stable if the following conditions hold:
\begin{equation*}\label{conG6}
\frac{\sigma_1\eta_1^*}{\eta_1^*-1}<K<\frac{Q\eta_1^*}{\eta_1^*-1}
\end{equation*}
where
\begin{equation*}\label{Qdef}
Q=\left\{
\begin{array}{ll}
\sigma_3& \text{\quad if  $\eta_2^*> \eta_1^*$};\\
\hat{S}_1& \text{\quad if $\eta_2^*<\eta_1^*$.}
\end{array}
\right.
\end{equation*}

\end{proposition}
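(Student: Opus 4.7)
The plan is to mirror the analysis carried out for $G_7$ in Section~\ref{section the equilibrium state G7}, exploiting the fact that $G_6$ has the same structural role (three positive components, $I_2=0$) with indices $1$ and $2$ swapped.

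First I would dispense with the nonnegativity claim: the formula \eqref{equation G_6 coordinates} shows that $I_1^*, I_{12}^*\ge 0$ exactly when $\sigma_1\le S^*\le \sigma_3$, and together with $S^*=K(1-1/\eta_1^*)$ and $\eta_1^*>1$ this gives the nonnegative range $\sigma_1\eta_1^*/(\eta_1^*-1)<K<\sigma_3\eta_1^*/(\eta_1^*-1)$.

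Next I would write down the Jacobian $J_6$ of the right-hand side of \eqref{submodel2} at $G_6$. Because $I_2^*=0$, the row corresponding to $I_2$ has only one nonzero entry on the diagonal, namely
\begin{equation*}
\lambda:=\alpha_2 S^*-\eta_2 I_{12}^*-\gamma_2 I_1^*-\mu_2,
\end{equation*}
so $\lambda$ is one eigenvalue and the remaining spectrum comes from the $3\times 3$ principal minor in the variables $S,I_1,I_{12}$. The latter submatrix factors as $\mathrm{diag}(S^*,I_1^*,I_{12}^*)$ times the matrix
\begin{equation*}
\begin{bmatrix}
-r/K & -\alpha_1 & -\alpha_3\\
\alpha_1 & 0 & -\eta_1\\
\alpha_3 & \eta_1 & 0
\end{bmatrix},
\end{equation*}
which is stable by the Routh--Hurwitz criterion exactly as in the $G_7$ analysis. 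Hence local stability of $G_6$ reduces to the single inequality $\lambda<0$.

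Substituting $I_1^*=\tfrac{\alpha_3}{\eta_1}(\sigma_3-S^*)$ and $I_{12}^*=\tfrac{\alpha_1}{\eta_1}(S^*-\sigma_1)$ into $\lambda<0$, multiplying by $\eta_1$ and using $\alpha_i\sigma_i=\mu_i$, the condition collapses to
\begin{equation*}
(\Delta_\alpha+\gamma_2\alpha_3)\,S^*<\Delta_\mu+\gamma_2\mu_3.
\end{equation*}
Under assumption \eqref{Ko1} (which gives $\Delta_\alpha+\gamma_2\alpha_3>0$ for small $\bar\gamma$, and more generally can be verified directly) this is equivalent to $S^*<\hat S_1$ with $\hat S_1$ as in \eqref{Apr30b}. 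Combining with the nonnegativity range $\sigma_1<S^*<\sigma_3$ yields $S^*<\min(\sigma_3,\hat S_1)$, and the proposition will follow once the dichotomy between $\sigma_3$ and $\hat S_1$ is identified.

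The main (and only mildly delicate) step is therefore to compare $\hat S_1$ with $\sigma_3$. A direct computation gives
\begin{equation*}
\hat S_1-\sigma_3=\frac{\Delta_\mu-\sigma_3\Delta_\alpha+\gamma_2(\mu_3-\sigma_3\alpha_3)}{\Delta_\alpha+\gamma_2\alpha_3}=\frac{\Delta_\mu-\sigma_3\Delta_\alpha}{\Delta_\alpha+\gamma_2\alpha_3},
\end{equation*}
using $\mu_3=\sigma_3\alpha_3$. By the identity \eqref{etaAetaA}, the numerator has the same sign as $\eta_2^*-\eta_1^*$, so $\hat S_1>\sigma_3$ iff $\eta_2^*>\eta_1^*$. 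This gives exactly $Q=\sigma_3$ in the case $\eta_2^*>\eta_1^*$ and $Q=\hat S_1$ in the case $\eta_2^*<\eta_1^*$, completing the proof. I expect the main obstacle to be simply keeping the algebra clean when reducing $\lambda<0$ to $(\Delta_\alpha+\gamma_2\alpha_3)S^*<\Delta_\mu+\gamma_2\mu_3$ and when justifying the sign of $\Delta_\alpha+\gamma_2\alpha_3$; once that is handled, the rest is a straightforward application of Routh--Hurwitz and the identities already recorded in the introduction.
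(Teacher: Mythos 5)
Your proposal is correct and follows essentially the same route the paper intends: the paper states Proposition~\ref{pro:G6} without a displayed proof, but its method is exactly the mirror of the explicit $G_7$ computation in Section~\ref{section the equilibrium state G7} (block-triangular Jacobian, Routh--Hurwitz for the $3\times 3$ factor, and reduction of stability to the sign of the remaining diagonal entry), which is what you carry out, with the comparison of $\hat S_1$ to $\sigma_3$ via \eqref{etaAetaA} matching the paper's identity \eqref{S1S2}. Note only that your derived value $\hat S_1=(\Delta_\mu+\gamma_2\mu_3)/(\Delta_\alpha+\gamma_2\alpha_3)$ is the correct one, consistent with $\hat K_1$ and \eqref{S1S2}; the denominator printed in \eqref{Apr30b} appears to contain a typo.
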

The case $\eta_2^*> \eta_1^*$ (when we have no bifurcation) is considered in our paper \cite{part1}.
Here we will assume  that
\begin{equation}\label{Apr30a}
\eta^*_1>\eta^*_2.
\end{equation}
By \eqref{etaA13} the last inequality implies that

\begin{equation*}\label{Apr30aa}
\Delta_\alpha>(\eta^*_1-\eta_2^*)A_2\alpha_1>0.
\end{equation*}

By using \eqref{deltamu2}, one verifies straightforward that
$$
\sigma_2<\frac{\Delta_\mu}{\Delta_\alpha}<\hat S_{1}<\sigma_3.
$$

Notice a useful identity (the last equality is by \eqref{etaAetaA})
\begin{equation}\label{S1S2}
\hat{S}_1-\hat{S}_2=\frac{\alpha_3\bar \gamma (\sigma_3\Delta_\alpha-\Delta_\mu)}
{(\Delta_\alpha-\gamma_1\alpha_3)(\Delta_\alpha+\gamma_2\alpha_3)}=
\frac{\bar \gamma r A_1A_2(\eta_1^*- \eta_2^*)}
{(\Delta_\alpha-\gamma_1\alpha_3)(\Delta_\alpha+\gamma_2\alpha_3)}.
\end{equation}
As a result of Corollary \ref{cor:G7stable} and proposition \ref{pro:G6} we get that $\hat S_1,\hat S_2$ only exist as parts of equilibrium points when $\Delta_\alpha-\gamma_1\alpha_3>0$ and $\eta_1^*>\eta_2^*>1$. In this case we see from \eqref{S1S2} that $\hat S_1>\hat S_2$.

We will now prove the following lemma
\begin{lemma}\label{lemma about bifurcation at S_1} Let (\ref{Apr30a}) be valid and $\frac{\partial P}{\partial S}|_{S=\hat S_1}>0$.
Then  there exists a smooth branch of equilibrium points $G_8(K)=(S,I_1,I_2,I_{12})(K)$ defined for small $|K- \hat K_1|$ with the asymptotics
\begin{eqnarray}\label{equation I_2 of G_8 at bifurcation}
&&S(K)=\hat S_1+\mathcal{O}(K-\hat K_1)\nonumber\\
&&     I_1(K)=\frac{\alpha_3}{\eta_1}(\sigma_3-\hat S_1)+\mathcal{O}(K-\hat K_1)\nonumber\\
 && I_2(K)=\frac{\eta_1r(\Delta_\mu+\gamma_2\mu_3)I_{12}(\hat K_1)}{\frac{\partial P(S)}{\partial S}|_{K=\hat K_1} \hat K_1^2}(K-\hat K_1) +\mathcal{O}((K-\hat K_1)^2)\label{eq: I_2 of G_8 around K_1}\\
  &&  I_{12}(K)=\frac{\alpha_1}{\eta_1}(\hat S_1-\sigma)+\mathcal{O}(K-\hat K_1).\nonumber
\end{eqnarray}
 Furthermore this equilibrium point is locally stable for $\hat K_1<K\leq \hat K_1+\varepsilon$, where $\varepsilon$ is a small positive number. Moreover all equilibrium points 
 in a small neighborhood of $G_6(\hat{K}_1)$ are exhausted by two branches $G_6(K)$ and $G_8(K)$.
\end{lemma}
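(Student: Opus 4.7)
The plan is to apply the Implicit Function Theorem to the system $F(S,I_1,I_2,I_{12};K)=0$ consisting of the four equations of \eqref{Equilib1}, with $K$ as parameter and $(S,I_1,I_2,I_{12})$ as unknowns. At the base point $\bigl(G_6(\hat K_1),\hat K_1\bigr)$ all four equations are satisfied, since the third one reduces on the $G_6$-branch to $\eta_1^{-1}\bigl[S(\Delta_\alpha+\gamma_2\alpha_3)-\Delta_\mu-\gamma_2\mu_3\bigr]$, which vanishes precisely when $S=\hat S_1$ by the definition \eqref{Apr30b}. The Jacobian $\partial F/\partial(S,I_1,I_2,I_{12})$ at this base point factors as $\operatorname{diag}(1,1,1,I_{12}^{(6)})\,B|_{G_6(\hat K_1)}$, because the $(4,4)$-entry $\alpha_3S+\eta_1I_1-\mu_3$ vanishes on $G_6$ and the remaining three rows agree with those of $B$ (with $r_2=0$). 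By identity \eqref{May1a} its determinant equals $\partial P(S)/\partial S|_{S=\hat S_1}$, positive by hypothesis, so IFT delivers the smooth branch $G_8(K)$ through $G_6(\hat K_1)$ for $|K-\hat K_1|<\varepsilon$.

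For the asymptotic expansion in \eqref{equation I_2 of G_8 at bifurcation} I would differentiate $F_i=0$ with respect to $K$. Only $F_1$ carries an explicit $K$-dependence, so the derivatives $(dS,dI_1,dI_2,dI_{12})/dK$ at $\hat K_1$ solve the linear system whose matrix is the Jacobian above and whose right-hand side is $-(rS/K^2,0,0,0)^{\top}$. Cramer's rule expresses $dI_2/dK|_{\hat K_1}$ as a ratio of $4\times 4$ determinants; expanding the numerator along its third column (which has a single non-zero entry) and substituting $\hat S_1(\Delta_\alpha+\gamma_2\alpha_3)=\Delta_\mu+\gamma_2\mu_3$ reproduces the precise coefficient stated in the lemma. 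The three remaining components are smooth in $K$ and take the values of $G_6(\hat K_1)$ at $\hat K_1$, which furnishes the $\mathcal{O}(K-\hat K_1)$ remainders.

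Stability for $K$ slightly above $\hat K_1$ follows from the exchange-of-stability principle of \cite{kielhofer2012introduction}. After permuting the $I_2$-row and column to the last position, the ODE Jacobian at $G_6(\hat K_1)$ becomes block upper triangular with a stable $3\times 3$ block (the Jacobian of the $(S,I_1,I_{12})$-subsystem on the invariant manifold $\{I_2=0\}$, which is stable by Proposition~\ref{pro:G6}) and a scalar block $F_3|_{G_6(K)}=\eta_1^{-1}(\Delta_\alpha+\gamma_2\alpha_3)(S(K)-\hat S_1)$ crossing zero transversally from negative to positive at $\hat K_1$. The bifurcating $G_8$-branch therefore inherits the opposite sign pattern for the critical eigenvalue, producing local asymptotic stability just above $\hat K_1$. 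Exhaustiveness is immediate from the invariance of $\{I_2=0\}$: equilibria inside this manifold satisfy the reduced system $F_1=F_2=F_4=0$ in $(S,I_1,I_{12})$ and form $G_6(K)$ by IFT applied to the stable $3\times 3$ block, while any equilibrium off the manifold satisfies the full four-equation system and must coincide with $G_8(K)$ by the uniqueness in the IFT above. The main obstacle is the stability step, where one has to verify the transversality hypothesis of the exchange-of-stability theorem (which here reduces to $\Delta_\alpha+\gamma_2\alpha_3>0$, guaranteed by \eqref{Apr30a}) and confirm by continuity that the three non-critical eigenvalues of the Jacobian along $G_8(K)$ remain in the open left half-plane.
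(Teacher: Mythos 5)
Your proposal is correct and reaches the same conclusions, but it implements the argument with different machinery than the paper. The paper does not apply the implicit function theorem to the full four-equation system; instead it splits off the $I_2$ variable and works in the $(3+1)$ framework of Appendix~\ref{section appendix degenerate bifurcation point}, writing the system as $F(x;s)=0$, $x_4f(x')=0$ with $x'=(S,I_1,I_{12})$ and $f(x')=\alpha_2x_1-\eta_2x_3-\gamma_2x_2-\mu_2$. Existence and the slope of $I_2$ then come from the scalar quantity $\Theta=\nabla_{x'}f\cdot A^{-1}\partial_{x_4}F$, which the paper relates to $\partial_SP(\hat S_1)$ through a bordered-matrix identity and \eqref{May1a}, and the formula \eqref{Apr18d} gives $I_2(K)$; the stability exchange is obtained from the explicit eigenvalue perturbation formulas \eqref{Apr20a} and \eqref{Apr18db} rather than by citing the exchange-of-stability principle. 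Your route --- invertibility of the full $4\times4$ Jacobian via \eqref{May1a} plus Cramer's rule for $dI_2/dK$ --- is legitimate and in fact mirrors the computation the paper already performs in Lemma~\ref{lemma: interval properties og G_8}; I checked that expanding the relevant cofactor gives $-\eta_1(\Delta_\alpha+\gamma_2\alpha_3)$ and, after the substitution $\hat S_1(\Delta_\alpha+\gamma_2\alpha_3)=\Delta_\mu+\gamma_2\mu_3$, exactly the stated coefficient. Your exhaustiveness argument (splitting the factored third equation of \eqref{Equilib} into $I_2=0$ versus the reduced equation, with uniqueness from the two invertible Jacobians) is cleaner than the paper's appeal to monotonicity of $s\mapsto f(\xi(s),0;s)$. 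Two small imprecisions to fix: the Jacobian of the system \eqref{Equilib1} at $G_6(\hat K_1)$ is $B$ itself, not $\mathrm{diag}(1,1,1,I_{12})\,B$ (the extra diagonal factor belongs to the version with the undivided fourth equation of \eqref{Equilib}; either normalization works, but you should commit to one so the determinant is $\partial_SP/I_{12}$ or $\partial_SP$ consistently); and the stability of the $3\times3$ block is not literally Proposition~\ref{pro:G6} (which concerns the full $4\times4$ Jacobian) but the Routh--Hurwitz argument for $\mathrm{diag}(S,I_1,I_{12})\hat A$ that the paper invokes separately inside this proof.
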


\begin{remark}
The constant $\varepsilon$ does not depend on $\gamma_i$ but it does depend on $\alpha_i$, $\mu_i$ and $\eta_i$.
\end{remark}

\begin{proof}
In order to use results of Appendix \ref{section appendix degenerate bifurcation point} we write system \eqref{Equilib} in the following form
\begin{eqnarray}\label{g6bifurcationcompactsystemform}
&&F(x;s)=0,\nonumber\\
&&x_4f(x')=0,
\end{eqnarray}
where
\begin{equation*}
x=(x',x_4)=(x_1,x_2,x_3,x_4)=(S,I_1,I_{12},I_2),
\end{equation*}
where $s=K-\hat K_1$,
\begin{equation*}
F(x,s)=\left(
\begin{matrix}
\Big(\frac{r}{K}(K-x_1)-\alpha_1x_2-\alpha_2x_4-\alpha_3x_3\Big)x_1
\\
(\alpha_1x_1-\eta_1x_3-\gamma_1x_4-\mu_1)x_2
\\
(\alpha_3x_1+\eta_1x_2+\eta_2x_4-\mu_3)x_3+\bar{\gamma}x_2x_4
\end{matrix}\right),
\end{equation*}
and
\begin{equation*}
f(x')=\alpha_2x_1-\eta_2x_3-\gamma_2x_2-\mu_2.
\end{equation*}
By the definition of the bifurcation point (\ref{Apr30b}) and (\ref{equation G_6 coordinates}), we have that
$$
x^*=(\hat S_1,\frac{\alpha_3}{\eta_1}(\sigma_3-\hat S_1),\frac{\alpha_1}{\eta_1}(\hat S_1-\sigma_1))
$$
solves the equation $F(x^*,0;0)=0$ and $f(x^*)=0$. Futhermore, the vector
$$
\xi(s)=x^*+s\Big(1-\frac{1}{\eta_1^*}\Big)\Big(1,\frac{-\alpha_3}{\eta_1},\frac{\alpha_1}{\eta_1}\Big)
$$
solves $F(\xi(s),0;s)=0$. The matrix $A=(\partial_{x_j}F_k(x^*,0,0))_{j,k=1}^{3}$ is evaluated as
$$
A={\rm diag}(x_1^*,x_2^*,x_3^*)\hat{A},\;\;\quad
\hat{A}=\left(
\begin{matrix}
-\frac{r}{\hat K_2} & -\alpha_1 & -\alpha_3
\\
\alpha_1 & 0 & -\eta_1
\\
\alpha_3 & \eta_1 & 0
\end{matrix}
\right).
$$
With the help of Hurwitz stability criterion we can deduce that $A(y^*,0)$ is stable and invertible.

 Since
 $$
 \nabla_{x'} f(x^*)=(\alpha_2,-\gamma_2,\eta_2),\;\;\partial_{x_4}F(x^*,0;0)=
 (-\alpha_2x^*_1,-\gamma_1x^*_2,\eta_2x^*_3+\bar{\gamma}x^*_2)^T,
 $$
 we get
$$
\Theta=\nabla_{x'} f \cdot A^{-1}\partial_{x_4}F|_{x=(x^*,0),s=0}=(\alpha_2,-\gamma_2,\eta_2)\hat{A}^{-1}
(-\alpha_2,-\gamma_1,\eta_2+\bar{\gamma}x^*_2/x_3^*)^T.
$$

Let us evaluate $\Theta$ and check that $\Theta\neq 0$.
First we observe the equality
\begin{equation}\label{M2a}
\left(
\begin{matrix}
1 & 0 & 0 & 0
\\
0 & 1 & 0 & 0
\\
0 & 0 & 0 & 1
\\
0 & 0 & 1 & 0
\end{matrix}
\right)\!\!\!
B(y^*,K_1)\!\!\!
\left(
\begin{matrix}
1 & 0 & 0 & 0
\\
0 & 1 & 0 & 0
\\
0 & 0 & 0 & 1
\\
0 & 0 & 1 & 0
\end{matrix}
\right)=\!\!
\left(\begin{matrix}
 &  &  & -\alpha_2
\\
 & \hat A &  & -\gamma_1
\\
 &  &  & \eta_2+\bar{\gamma}x^*_2/x_3^*
\\
\,\alpha_2 \,&\,-\gamma_2\,&\,\eta_2\, &\, 0
\end{matrix}
\right)
\end{equation}
By (\ref{May1a})
\begin{equation}\label{M2aaz}
\det (\mbox{left-hand side of (\ref{M2a})})=\frac{1}{I_{12}}\frac{\partial P(S)}{\partial S}\Big|_{S=S_1}.
\end{equation}
Let us show that
\begin{equation}\label{M2aa}
\det (\mbox{right-hand side of (\ref{M2a})})=\Theta\det\hat{A}\,.
\end{equation}
For this purpose consider the equation
\begin{equation}\label{ssim}
\left(\begin{matrix}
\hat A & (-\alpha_2,-\gamma_1,\eta_2+\bar{\gamma}x^*_2/x_3^*)^T
\\
(\alpha_2,-\gamma_2,\eta_2) & 0
\end{matrix}
\right)
\left(\begin{matrix}
X
\\
x
\end{matrix}\right)=
\left(
\begin{matrix}
\bar 0
 \\
1
\end{matrix}
\right)
\end{equation}
where $X\in\mathbb{R}^{3}$, $x\in \mathbb{R}$ and $\bar 0=(0,0,0)^T$.
We denote by $\hat{B}$ the matrix in the left-hand side of (\ref{ssim}) and using the expression for
 the matrix inverse, we get
\begin{equation}\label{M2b}
x=\frac{\det(\hat A)}{\det(\hat B)}.
\end{equation}
Solving (\ref{ssim}) as a linear system by finding first $X$ and then $x$ from the last equation, we obtain
$-\Theta x=1$. The last relation together with (\ref{M2b}) gives (\ref{M2aa}). Now the relations (\ref{M2aaz}) and
(\ref{M2aa}) imply
\begin{equation*}
\Theta=-\frac{\frac{\partial P(S)}{\partial(S)}\big|_{S=\hat S_1}}{\det(\hat A)I_{12}},
\end{equation*}
which along with

\begin{equation*}
\det(\hat A(\hat K_1))=\frac{1}{\hat S_1 I_1^* I_{12}^*}\left|
\begin{matrix}
-\frac{r}{\hat K}\hat S_1 & -\alpha_1\hat S_1 & -\alpha_3\hat S_1
\\
\alpha_1I_1^* & 0 & -\eta_1I_1^*
\\
\alpha_3I^*_{12} & \eta_1I^*_{12} & 0
\end{matrix}
\right|=-\frac{r}{\hat K_1}\eta_1^2
\end{equation*}
gives
$$
\Theta=\hat K_1\frac{\frac{\partial P(S)}{\partial(S)}\big|_{S=\hat S_1}}{r\eta_1^2I_{12}}>0.
$$
Next, since $f(x^*,0;0)=0$, we have
$$
f(\xi(s),0;s)=\frac{s}{\eta_1}\Big(1-\frac{1}{\eta_1^*}\Big)\Big(\Delta_\alpha+\gamma_2\alpha_3\Big).
$$
Now applying \eqref{Apr18d} in the appendix we get
\begin{equation*}
x_4=\frac{1}{\eta_1}\Big(1-\frac{1}{\eta_1^*}\Big)\Big(\Delta_\alpha+\gamma_2\alpha_3\Big)\frac{1}{\Theta}s+O(s^2),
\end{equation*}
which is equivalent to \eqref{eq: I_2 of G_8 around K_1}.

To prove local stability let us consider the matrix
$$
{\mathcal B}=\left(\begin{matrix}
 A & \partial_{x_n}F(x^*,0;0)
\\
0 & 0
\end{matrix}\right).
$$
Since the matrix $A$ is stable the matrix ${\mathcal B}$ has three eigenvalues with negative real part and one eigenvalue zero. The eigenvalues of the Jacobian matrix
$$
{\mathcal J}(s)=\left(\begin{matrix}
 \nabla_{x'}F(\hat{x}(s);s) & \partial_{x_n}F(\hat{x}(s);s)
\\
\nabla_{x'}(x_nf(x'))|_{x=\hat{x}(s)} & f(\hat{x}')
\end{matrix}\right)
$$
are small perturbation of the eigenvalue of ${\mathcal B}={\mathcal J}(s)$. Therefore three of them have negative real part for small $s$ and the last one $\lambda(\hat{x}(s))$, which is perturbation of zero eigenvalue of ${\mathcal B}$, has the following asymptotics (see \eqref{Apr20a} in the appendix)
$$
\lambda(\hat{x}(s))=-\frac{d}{ds}f(\xi(s),0;s)|_{s=0}s+O(s^2)
=-\frac{s}{\eta_1}\Big(1-\frac{1}{\eta_1^*}\Big)\Big(\Delta_\alpha+\gamma_2\alpha_3\Big)+O(s^2)
$$
and hence it is negative for small positive $s$. This proves the local stability of the coexistence equilibrium point.
\end{proof}

\subsection{Bifurcation of $G_7$}\label{section bifurcation of G7}
We will assume in this section that
\begin{equation}\label{eta1>eta2>1}
    \eta^*_1>\eta^*_2>1
\end{equation}
From \cite{part1} we know that the equilibrium point $G_7$ with only zero component $I_1$ has the form
\begin{equation}\label{equation G_7 coordinates}
    G_7=(S^*,0,\frac{\alpha_3}{\eta_2}(\sigma_3-S^*),\frac{\alpha_2}{\eta_2}(S^*-\sigma_2))
\end{equation}
where $S^*=K (1-\frac{1}{\eta^*_2})$.
We also know that it has positive components (except $I_1$) when $\eta^*_2>1$ and
\begin{equation*}
    \sigma_2<S^*<\sigma_3\text{ or equivalently  }
    \frac{\sigma_2\eta^*_2}{\eta_2^*-1}<K<\frac{\sigma_3\eta^*_2}{\eta_2^*-1} \label{intervalforG_7}
\end{equation*}

As in section \ref{sec:g6} we obtain
$$\Delta_\alpha>0$$

The bifurcation point (the point where the Jacobian is zero) corresponds to
\begin{equation}\label{eq: K_2,S_2}
    K=\hat K_2=\frac{\Delta_\mu-\gamma_1\mu_3}{\Delta_\alpha-\gamma_1\alpha_3}\frac{\eta^*_2}{\eta_2^*-1}\text{ and } S^*=\hat S_2=\frac{\Delta_\mu-\gamma_1\mu_3}{\Delta_\alpha-\gamma_1\alpha_3}
\end{equation}
and is denoted $    \hat G_7=G_7(\hat K_2)$.

For $\gamma_1^*<\eta_2^*$ we have that
$\hat S_2>\sigma_2$ and that $G_7$ is stable for $K$ in the interval
\begin{equation*}
    \frac{\hat S_2\eta^*_2}{\eta^*_2-1}<K<\frac{\sigma_3\eta_2^*}{\eta^*_2-1}
\end{equation*}

We will now prove the following lemma
\begin{lemma}\label{lemma linearization of G_8 at K_2}
Let \eqref{eta1>eta2>1} be valid and $\frac{\partial P}{\partial S}\big|_{S=\hat S_2}>0$. Then there exist a smooth branch of equilibrium points $G_8=(S,I_1,I_2,I_{12})(K)$ defined for small $|K-\hat K_2|$ with the asymptotics
\begin{align}
S(K)&=\hat S_2+\mathcal{O}(K-\hat K_2)\nonumber
\\
    I_1(K)&=-\frac{\eta_2r(\Delta_\mu-\gamma_1\mu_3)I_{12}}{\frac{\partial P(S)}{\partial S}\hat K_2^2}(K-\hat K_2) +\mathcal{O}((K-\hat K_2)^2)\label{equation I_1 of G_8 at bifurcation}
    \\
  I_2(K)&=\frac{\alpha_3}{\eta_2}(\sigma_3-\hat S_2)+\mathcal{O}(K-\hat K_2)\nonumber
    \\
    I_{12}(K)&=\frac{\alpha_1}{\eta_2}(\hat S_2-\sigma_2)+\mathcal{O}(K-\hat K_2).\nonumber
\end{align}
These equilibrium points are locally stable for $\hat K_2-\varepsilon\leq K< \hat K_2$, where $\varepsilon$ is a small positive number. Moreover all equilibrium points in a small neighborhood of $G_7(\hat{K}_2)$ 
 are exhausted by two branches $G_7(K)$ and $G_8(K)$.
\end{lemma}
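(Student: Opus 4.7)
The plan is to mirror the proof of Lemma~\ref{lemma about bifurcation at S_1} with the roles of $I_1$ and $I_2$ interchanged, applying the same appendix machinery for degenerate bifurcations. I reorder coordinates so that the component vanishing at $G_7$ is the distinguished last variable: set $x=(x',x_4)=(x_1,x_2,x_3,x_4)=(S,I_2,I_{12},I_1)$ and $s=K-\hat K_2$, and rewrite the equilibrium system \eqref{Equilib} in the form \eqref{g6bifurcationcompactsystemform} with
\begin{equation*}
F(x,s)=\left(\begin{matrix}
\bigl(\frac{r}{K}(K-x_1)-\alpha_2 x_2-\alpha_3 x_3-\alpha_1 x_4\bigr)x_1
\\
(\alpha_2 x_1-\eta_2 x_3-\gamma_2 x_4-\mu_2)x_2
\\
(\alpha_3 x_1+\eta_2 x_2+\eta_1 x_4-\mu_3)x_3+\bar\gamma x_2 x_4
\end{matrix}\right),
\qquad
f(x')=\alpha_1 x_1-\eta_1 x_3-\gamma_1 x_2-\mu_1.
\end{equation*}
By \eqref{eq: K_2,S_2} and \eqref{equation G_7 coordinates}, the point $x^*=\bigl(\hat S_2,\frac{\alpha_3}{\eta_2}(\sigma_3-\hat S_2),\frac{\alpha_2}{\eta_2}(\hat S_2-\sigma_2)\bigr)$ satisfies $F(x^*,0;0)=0$ and $f(x^*)=0$, and the curve $\xi(s)$ obtained by inserting $K=\hat K_2+s$ into the formulas for $G_7$ continues to solve $F(\xi(s),0;s)=0$ identically.

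Next I verify the hypotheses of the appendix lemma. The matrix $A=(\partial_{x_j}F_k(x^*,0,0))_{j,k=1}^{3}$ factors as $\mathrm{diag}(x_1^*,x_2^*,x_3^*)\,\hat A$, where $\hat A$ is the $3\times 3$ block already appearing in \eqref{JII7}; by Routh-Hurwitz it is stable and invertible. Writing $\Theta=\nabla_{x'}f\cdot A^{-1}\partial_{x_4}F$ at $(x^*,0;0)$, I use the same determinantal manipulation as in the proof of Lemma~\ref{lemma about bifurcation at S_1} -- permute the third and fourth rows and columns of $B$ to expose a block of shape $\hat A$ bordered by a column $v$ and a row $w^T$ with a zero corner -- and combine with \eqref{May1a} to obtain
\begin{equation*}
\Theta=\hat K_2\,\frac{\partial P/\partial S\big|_{S=\hat S_2}}{r\,\eta_2^{2}\,I_{12}}>0,
\end{equation*}
positivity coming from the standing hypothesis $\partial P/\partial S|_{S=\hat S_2}>0$.

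A direct differentiation of $f$ along the $G_7$-branch yields
\begin{equation*}
f(\xi(s),0;s)=-\frac{s}{\eta_2}\Bigl(1-\frac{1}{\eta_2^*}\Bigr)(\Delta_\alpha-\gamma_1\alpha_3)+O(s^2);
\end{equation*}
the extra minus sign compared to the $G_6$ case reflects the asymmetry $\Delta_\alpha=\eta_1\alpha_2-\eta_2\alpha_1$ under the $I_1\leftrightarrow I_2$ swap. Feeding $\Theta$ and this expression into the appendix formula \eqref{Apr18d}, and simplifying via $\hat S_2(\Delta_\alpha-\gamma_1\alpha_3)=\Delta_\mu-\gamma_1\mu_3$ together with $\hat S_2=\hat K_2(1-1/\eta_2^*)$, reproduces the leading asymptotics \eqref{equation I_1 of G_8 at bifurcation}. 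The remaining components of $G_8(K)$ coincide to leading order with those of $\hat G_7$, with $O(s)$ corrections supplied by the implicit function theorem.

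For local stability I invoke the eigenvalue clause \eqref{Apr20a} of the appendix. Three eigenvalues of the Jacobian at $G_8(\hat K_2+s)$ stay in the left half-plane as small perturbations of the stable spectrum of $A$, while the fourth obeys
\begin{equation*}
\lambda(s)=-\frac{d}{ds}f(\xi(s),0;s)\Big|_{s=0}\,s+O(s^2)=\frac{s}{\eta_2}\Bigl(1-\frac{1}{\eta_2^*}\Bigr)(\Delta_\alpha-\gamma_1\alpha_3)+O(s^2).
\end{equation*}
Under \eqref{eta1>eta2>1}, the computation at the end of the proof of Corollary~\ref{cor:G7stable} gives $\Delta_\alpha-\gamma_1\alpha_3>0$, so $\lambda(s)<0$ precisely for small $s<0$, i.e.\ for $K\in[\hat K_2-\varepsilon,\hat K_2)$. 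The same inequality together with $\hat S_2>0$ forces $\Delta_\mu-\gamma_1\mu_3>0$, so the coefficient in \eqref{equation I_1 of G_8 at bifurcation} makes $I_1(K)>0$ on the same side, confirming that $G_8(K)$ is genuinely a coexistence equilibrium there. Exhaustion of a small neighbourhood of $G_7(\hat K_2)$ by the two branches $G_7(K)$ and $G_8(K)$ is the uniqueness clause of the appendix lemma. The main obstacle is exactly the sign bookkeeping in the last two displays: one must check that the extra minus produced by the coordinate swap combines with the sign of $\Delta_\alpha-\gamma_1\alpha_3$ so that $I_1(K)>0$ and $\lambda(s)<0$ occur on the same side of $\hat K_2$; once this is pinned down, the remainder is a transcription of Lemma~\ref{lemma about bifurcation at S_1} with indices $1\leftrightarrow 2$.
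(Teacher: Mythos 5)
Your proof follows the paper's own argument essentially verbatim: the same reordering of coordinates to $(S,I_2,I_{12},I_1)$, the same factorization $A=\mathrm{diag}(x^*)\hat A$ with Routh--Hurwitz, the same evaluation of $\Theta$ through the determinant identity \eqref{May1a}, and the same appeal to the appendix for the $x_4$-asymptotics and the perturbed zero eigenvalue. In fact your sign bookkeeping for $f(\xi(s),0;s)$ (the extra minus coming from $\alpha_1\eta_2-\eta_1\alpha_2+\gamma_1\alpha_3=-(\Delta_\alpha-\gamma_1\alpha_3)$) is the internally consistent choice: it makes $\lambda(s)<0$ and $I_1(K)>0$ occur together for $K<\hat K_2$, exactly as the lemma asserts, whereas the paper's displayed formula omits this minus and consequently ends by claiming stability for small positive $s$, in tension with its own statement.
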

\begin{remark}
The constant $\varepsilon$ does not depend on $\gamma$ but it does depend on $\alpha$, $\mu$ and $\eta$.
\end{remark}
\begin{proof}
We write system \eqref{Equilib} in the form
\begin{eqnarray*}
    &&F(x;s)
    \\
    &&f(x')=0,
    \end{eqnarray*}
    where
    $$x=(x',x_4)=(x_1,x_2,x_3,x_4)=(S,I_2,I_{12},I_1)$$
    where $s=K-\hat K_2$,
    \begin{equation*}
        F(x,s)=\left(\begin{matrix}
\Big(\frac{r}{K}(K-x_1)-\alpha_1x_4-\alpha_2x_2-\alpha_3x_3\Big)x_1
\\
(\alpha_2x_1-\eta_2x_3-\gamma_2x_4-\mu_2)x_2
\\
(\alpha_3x_1+\eta_1x_4+\eta_2x_2-\mu_3)x_3+\bar{\gamma}x_4x_2
\end{matrix}\right)
    \end{equation*}
    and
    $$
    f(x')=\alpha_1x_1-\eta_1x_3-\gamma_1x_2-\mu_1
    $$
    By the definition of the bifurcation point \eqref{eq: K_2,S_2} and \eqref{equation G_7 coordinates}, we have that
    $$x^*=(\hat S_2,\frac{\alpha_3}{\eta_2}(\sigma_3-\hat S_2), \frac{\alpha_{12}}{\eta_2}(\hat S_2-\sigma_2))$$
    solves the equation $F(x^*,0;0)=0$ and $f(x^*)=0$. Furthermore, the vector
    $$\xi(s)=x^*+s(1-\frac{1}{\eta_2^*})(1,\frac{-\alpha_3}{\eta_2},\frac{\alpha_2}{\eta_2})$$
    solves the equation $F(\xi(s),0,s)=0$. The matrix  $A=(\partial_{x_j}F_k(x^*,0,0))^3_{j,k=1}$ is evaluated as
    $$A=\text{diag}(x_1^*,x_2^*,x_3^*)\hat A,\; \hat A=\left(\begin{matrix}
    -\frac{r}{K} & -\alpha_" & -\alpha_3
    \\
    \alpha_2 & 0 & -\eta_2
    \\
    \alpha_3 & \eta_2 & 0
    \end{matrix}\right)$$
    with the help of Hurwitz stability criterion we can deduce that $A$ is stable and invertible.
    Since
    $$\nabla_{x'}f(x^*)=(\alpha_1,\gamma_1,\eta_1),\;\partial_{x_4}F(x^*,0,;0)=(-\alpha_1x_1^*,-\gamma_2x^*_2, \eta_1x_3+\bar \gamma x_2)^T$$
    we get
 $$\Theta=\nabla_{x'}f\cdot A^{-1}\partial_{x_4}F\big|_{x=(x^*,0)}=(\alpha_1,\gamma_1,\eta_1)\hat A^{-1}(-\alpha_1x_1^*,-\gamma_2x^*_2, \eta_1x_3+\bar \gamma x_2)^T$$
 Let us evaluate $\Theta$ and check that $\Theta\neq 0$. First we observe the equality
\begin{equation}\label{M2a}
\left(
\begin{matrix}
1 & 0 & 0 & 0
\\
0 & 1 & 0 & 0
\\
0 & 0 & 0 & 1
\\
0 & 0 & 1 & 0
\end{matrix}
\right)\!\!\!
B(y^*,\hat K_2)\!\!\!
\left(
\begin{matrix}
1 & 0 & 0 & 0
\\
0 & 1 & 0 & 0
\\
0 & 0 & 0 & 1
\\
0 & 0 & 1 & 0
\end{matrix}
\right)=
\left(\begin{matrix}
 &  &  & -\alpha_1
\\
 & \hat A &  & -\gamma_2
\\
 &  &  & \eta_1+\bar{\gamma}x^*_2/x_3^*
\\
\,\alpha_1 \,&\,-\gamma_1\,&\,\eta_1\, &\, 0
\end{matrix}
\right)
\end{equation}

In the same way as in section \ref{sec:g6} we get
\begin{equation*}
    \Theta=\hat K_2 \frac{\frac{\partial P(S)}{\partial S}\big|_{S=\hat S_2}}{\det(\hat A)I_{12}}
\end{equation*}
which along with
\begin{equation*}
    \det(\hat A(\hat K_2))=\left|
    \begin{matrix}
    -\frac{r}{\hat K_2} & -\alpha_2 & -\alpha_3
    \\
    \alpha_2 & 0 & -\eta_2
    \\
    \alpha_3 & \eta_2 & 0
    \end{matrix}
    \right|=-\eta_2^2\frac{r}{\hat K_2}
\end{equation*}
Next, since $f(x^*,0;0)=0$, we have
$$
f(\xi(s),0;s)=\frac{s}{\eta_2}\Big(1-\frac{1}{\eta_2^*}\Big)\Big(\Delta_\alpha-\gamma_1\alpha_3\Big).
$$
Now applying \eqref{Apr18d} we get
\begin{equation*}
x_4=\frac{1}{\eta_2}\Big(1-\frac{1}{\eta_2^*}\Big)\Big(\Delta_\alpha-\gamma_1\alpha_3\Big)\frac{1}{\Theta}s+O(s^2),
\end{equation*}
which is equivalent to \eqref{equation I_1 of G_8 at bifurcation}.
To prove local stability let us consider the matrix
$$
{\mathcal B}=\left(\begin{matrix}
 A & \partial_{x_n}F(x^*,0;0)
\\
0 & 0
\end{matrix}\right).
$$
Since the matrix $A$ is stable the matrix ${\mathcal B}$ has three eigenvalues with negative real part and one eigenvalue zero. The eigenvalues of the Jacobian matrix
$$
{\mathcal J}(s)=\left(\begin{matrix}
 \nabla_{x'}F(\hat{x}(s);s) & \partial_{x_n}F(\hat{x}(s);s)
\\
\nabla_{x'}(x_nf(x'))|_{x=\hat{x}(s)} & f(\hat{x}')
\end{matrix}\right)
$$
are small perturbation of the eigenvalue of ${\mathcal B}={\mathcal J}(s)$. Therefore three of them have negative real part for small $s$ and the last one $\lambda(\hat{x}(s))$, which is a perturbation of the zero eigenvalue of ${\mathcal B}$, has the following asymptotics (see \eqref{Apr20a} in \textcolor{teal}{appendix \ref{section appendix degenerate bifurcation point}})
$$
\lambda(\hat{x}(s))=-\frac{d}{ds}f(\xi(s),0;s)|_{s=0}s+O(s^2)
=-\frac{s}{\eta_2}\Big(1-\frac{1}{\eta_2^*}\Big)\Big(\Delta_\alpha-\gamma_1\alpha_3\Big)+O(s^2)
$$
and hence it is negative for small positive $s$. This proves the local stability of the coexistence equilibrium point.
\end{proof}


\subsection{Equilibrium transition for coexistence equilibrium points}

\begin{lemma}\label{lem:contG_8} Let the assumption (\ref{May1aa}) be valid. If there exist a coexistence equilibrium point then

(i)
$$
\eta_1^*>\eta_2^*\;\;\mbox{and}\;\; \eta_1^*>1
$$
and this point lies on the branch of coexistence eq. points which starts at $K=\hat K_1$ at the bifurcation point $\hat{G}_6$.
Moreover

(ii) if additionally $\eta_1^*>\eta_2^*>1$ then the above branch is finished at $K=\hat K_2$ at the point $\hat{G}_7$.

(iii) If $\eta_1^*>1>\eta_2^*$ then the above branch can be continued up to $K=\infty$.
\end{lemma}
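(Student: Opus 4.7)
The plan is to pick an arbitrary coexistence equilibrium, extend it via the implicit function theorem (justified by \eqref{May1aa} and \eqref{May1a}) to a maximal $C^{1}$ branch $G(K)=(S,I_1,I_2,I_{12})(K)$ on an interval $(K_1,K_2)$, and then identify both endpoint limits by an exhaustive case analysis combined with the bifurcation directions supplied by Lemmas~\ref{lemma about bifurcation at S_1}--\ref{lemma linearization of G_8 at K_2}. The required inequalities on $\eta_1^*,\eta_2^*$ will drop out of the identification of these endpoints.

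First I would invoke Lemma~\ref{lemma: interval properties og G_8} for monotonicity of $S(K)$ and $I_{12}(K)$ and for the existence of limits $G^{(1)}=\lim_{K\to K_1^+}G(K)$ and (when $K_2<\infty$) $G^{(2)}=\lim_{K\to K_2^-}G(K)$; each such limit is an equilibrium with at least one vanishing component and so lies in $\{G_3,G_4,G_5,G_6,G_7\}$. Three of these candidates can be excluded uniformly. At a $G_3$-type limit, passing to the limit in the third equation of \eqref{Equilib1} gives $\gamma_2 I_1=\alpha_2(\sigma_1-\sigma_2)<0$, contradicting $I_1\ge 0$. At a $G_4$-type limit, the first two equations of \eqref{Equilib1} force $K=\sigma_2\gamma^*/(\gamma^*-1)$, which is not positive under \eqref{M3a}. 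At a $G_5$-type limit, equations $(2)$ and $(3)$ of \eqref{Equilib1} produce two formulas for $I_{12}^{*}$ whose equality is the degenerate relation $\eta_1^*=\eta_2^*$.

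With $G_3,G_4,G_5$ excluded, any boundary limit is either $\hat G_6$ or $\hat G_7$, and the bifurcation asymptotics pin down on which side each sits. By \eqref{equation I_2 of G_8 at bifurcation} the coefficient of $K-\hat K_1$ in $I_2(K)$ is strictly positive (using \eqref{deltamu2} and \eqref{May1aa}), so $I_2>0$ precisely when $K>\hat K_1$; therefore $\hat G_6$ can only be a \emph{left} endpoint. Symmetrically, by \eqref{equation I_1 of G_8 at bifurcation} $I_1>0$ precisely when $K<\hat K_2$, so $\hat G_7$ can only be a \emph{right} endpoint. This already yields (i): $G^{(1)}=\hat G_6$, and the requirement $\sigma_1<\hat S_1<\sigma_3$ combined with \eqref{etaAetaA} forces $\eta_1^*>1$ and $\eta_1^*>\eta_2^*$; the strict inequality $\eta_1^*>\eta_2^*$ retroactively closes off the degenerate case $\eta_1^*=\eta_2^*$ that had been left open when ruling out $G_5$.

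For (ii), the hypothesis $\eta_1^*>\eta_2^*>1$ together with $\gamma^*<1$ and \eqref{S2sigma2} puts $\hat S_2\in(\sigma_2,\sigma_3)$, so $\hat G_7$ is admissible; the same classification at $G^{(2)}$ rules out $G_3,G_4,G_5$, and the identity $\hat S_1>\hat S_2$ from \eqref{S1S2} combined with monotonicity of $S(K)$ excludes a further $G_6$-limit on the right, giving $G^{(2)}=\hat G_7$ and $K_2=\hat K_2$. For (iii), $\eta_2^*<1$ eliminates $G_7$ from the list of admissible candidates altogether; $G_3,G_4,G_5$ are ruled out as before (with $\eta_1^*>1>\eta_2^*$ forbidding $\eta_1^*=\eta_2^*$), and a right endpoint of $G_6$-type is impossible because the $G_6$-curve $S=K(1-1/\eta_1^*)$ is strictly increasing in $K$ while $S(K)$ along the coexistence branch is strictly decreasing, so no finite $K_2$ admits a boundary limit; hence $K_2=\infty$ and Lemma~\ref{lemma: interval properties og G_8}(iv) supplies the limit equilibrium of the $K=\infty$ system. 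The hardest part will be organising the bifurcation-direction argument and ruling out $G_5$ non-circularly---concretely, one must show that $\hat G_6$ sits on the left and $\hat G_7$ on the right before asserting any inequality on $\eta_1^*,\eta_2^*$. A subsidiary concern is uniqueness of the branch through a given coexistence point, which follows because $P_{\gamma,K}(S)$ is quadratic in $S$ and \eqref{May1aa} selects a single admissible root at each $K$.
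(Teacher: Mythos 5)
Your overall strategy is the same as the paper's: take the maximal branch through the given coexistence point, invoke Lemma~\ref{lemma: interval properties og G_8} for monotonicity of $S$ and for the existence of boundary limits, classify those limits, and orient $\hat G_6$ and $\hat G_7$ as left/right endpoints. Where the paper simply cites Lemma~2 of \cite{part1} to assert that the only admissible boundary limits are $\hat G_6$ and $\hat G_7$ (with $G_4$ killed by \eqref{M3a}), you reprove the classification from scratch; your exclusions of $G_3$ and $G_4$ via the second/third equations of \eqref{Equilib1} are correct, and your use of the signs of the linear terms in \eqref{equation I_2 of G_8 at bifurcation} and \eqref{equation I_1 of G_8 at bifurcation} to place $\hat G_6$ on the left and $\hat G_7$ on the right is a legitimate substitute for the paper's argument via $\hat S_1>\hat S_2$ and $\partial S/\partial K<0$.

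The genuine gap is the $G_5$ case. You exclude a $G_5$-type boundary limit only under the hypothesis $\eta_1^*\neq\eta_2^*$, and then claim that the conclusion $\eta_1^*>\eta_2^*$ of part~(i) ``retroactively'' closes the degenerate case. That is circular: the inequality $\eta_1^*>\eta_2^*$ is obtained precisely by identifying $G^{(1)}=\hat G_6$, which in turn requires $G_5$ to have been eliminated first. If $\eta_1^*=\eta_2^*$, your chain of deductions never produces the contradiction you need, and part~(i) remains unproved in exactly that case. The fix is available inside the paper and does not need the branch at all: solving the second and third equations of \eqref{Equilib1} for $(S,I_{12})$ (determinant $\Delta_\alpha\neq0$ by \eqref{deltamu}) and substituting into the fourth gives, as in \eqref{M15ba} and the display following it,
\begin{equation*}
\eta_1I_1+\eta_2I_2=\frac{rA_1A_2(\eta_1^*-\eta_2^*)}{\alpha_3\Delta_\alpha}+O(\bar\gamma),
\end{equation*}
so positivity of $I_1,I_2$ at any coexistence point forces $\eta_1^*>\eta_2^*$ outright (for $\bar\gamma$ small), after which $G_5$-type limits are impossible and your argument goes through. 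A secondary, smaller blemish: in part~(iii) the stated reason for excluding a $G_6$-type \emph{right} endpoint (``the $G_6$-curve is increasing while the branch is decreasing'') is not the operative fact; what you need is that any $G_6$-type limit of the coexistence branch must satisfy the extra equation $\alpha_2S-\eta_2I_{12}-\gamma_2I_1-\mu_2=0$ and hence must be the bifurcation point $\hat G_6$ with $S=\hat S_1$, which is ruled out on the right because $S(K)<\hat S_1$ there --- an argument you already use correctly in part~(ii).
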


\begin{proof} Let us assume that there is a coexistence eq. point $G^*_8$ for $K=K^*$. Let $(K_1,K_2)$ be the maximal existence interval for existence of the branch $G_8(K)$ of coexistence equilibrium points containing $K^*$ and $G_8(K^*)=G^*_8$. According to Lemma \ref{lemma: interval properties og G_8} there exists the limit $G^*=\lim_{K\to K_1}G_8(K)$ and this limit is an equilibrium with at least one zero component.
According Lemma 2 in \cite{part1} the only possible scenarios are either that $G^*=G_6$ and $\alpha_2S^* - \eta_2I_{12} - \gamma_2 I_1 - \mu_2 =0$ or that $G^*=G_7$ and $\alpha_1S-\eta_1I_{12}-\gamma_1 I_2-\mu_1=0$. This happens only if $G^*=\hat G_6$ or $G^*=\hat G_7$. The case $G^*=G_4$ is disregarded due to assumption \eqref{M3a}.
According to \eqref{S1S2} with its associated comment we have that $\hat S_1>\hat S_2$ and $\eta^*_1>\eta_2^*$. Since $\hat S_1>S_2$ and $\frac{\partial S}{\partial K}<0$ according to Lemma \ref{lemma: interval properties og G_8} deduce that $G^*=\hat G_6$.
From existence of $\hat G_6$ it follows that $\eta_1^*>1$ and we obtain (i).

If $\hat K_2$ is finite then there is a limit of $G_8(K)$ as $K\to \hat K_2$ and this limit lies on the boundary. Simple modification of the above arguments shows that this limit is $\hat{G}_7$ which gives (ii).

In the case (iii) there are no $\hat{G}_6$ or $\hat{G}_7$ and hence the branch can be continued for all $K>\hat{K}_1$.

\end{proof}



When considering coexistence equilibrium points we assume that:

{\bf Assumption II}
\begin{itemize}
\item[$\bullet$]  If  $\eta_1^*>\eta_2^*$ and $\eta_1^*>1$  then $\partial_SP(\hat{S}_1)>0$ when $K=\hat{K}_1$;

\item[$\bullet$]  If $\eta_1^*>\eta_2^*>1$ then additionally to (i) it is supposed that $\partial_SP(\hat{S}_2)>0$ when $K=\hat{K}_2$.
\end{itemize}

\begin{lemma}\label{lemma:exG8} (i) Let $\eta_1^*>\eta_2^*>1$ and $\partial_SP(\hat{S}_i)>0$ when $K=\hat{K}_i$, $i=1,2$. Then there is a branch of coexistence equilibrium points starting at $\hat{G}_6$, $K=\hat{K}_1$, and ending at $\hat{G}_7$, $K=\hat{K}_2$.
All possible coexistence equilibrium points lies on this branch.

(ii) Let $\eta_1^*>1>\eta_2^*$ and $\partial_SP(\hat{S}_1)>0$ when $K=\hat{K}_1$. There is a branch  of coexistence equilibrium points starting at $\hat{G}_6$, $K=\hat{K}_1$, and defined for all $K>\hat{K}_1$.
All possible coexistence equilibrium points lies on this branch.
\end{lemma}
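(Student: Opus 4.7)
The plan is to patch together three ingredients already established: the two local bifurcation lemmas (Lemma~\ref{lemma about bifurcation at S_1} at $\hat{G}_6$ and Lemma~\ref{lemma linearization of G_8 at K_2} at $\hat{G}_7$), the monotonicity/continuation property of Lemma~\ref{lemma: interval properties og G_8}, and the global structural result of Lemma~\ref{lem:contG_8} which says that \emph{any} coexistence equilibrium must lie on a branch emanating from $\hat{G}_6$. This will simultaneously give existence, the boundary behaviour at the endpoints, and the ``all possible coexistence equilibria lie on this branch'' uniqueness statement.

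For existence in both (i) and (ii), first apply Lemma~\ref{lemma about bifurcation at S_1}: since $\eta_1^*>\eta_2^*$ (which holds in both cases, trivially in (ii) because $\eta_2^*<1<\eta_1^*$) and $\partial_SP(\hat{S}_1)>0$ at $K=\hat{K}_1$, we obtain a smooth branch $G_8(K)=(S(K),I_1(K),I_2(K),I_{12}(K))$ defined on an interval $(\hat{K}_1,\hat{K}_1+\varepsilon)$ of locally stable coexistence equilibria. By continuity of $\partial_S P_{\gamma,K}(S)$ and the assumed positivity at $\hat{S}_1$, condition \eqref{May1aa} persists along the branch near $\hat{K}_1$, so Lemma~\ref{lemma: interval properties og G_8} applies and provides a maximal interval of existence $(K_1,K_2)$ with $K_1=\hat{K}_1$, on which $S(K)$ and $I_{12}(K)$ are strictly decreasing.

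For case (i), I would argue that $K_2=\hat{K}_2$ and the branch terminates at $\hat{G}_7$. Suppose first $K_2<\infty$. By Lemma~\ref{lemma: interval properties og G_8}(iii) the limit $G^*=\lim_{K\to K_2}G_8(K)$ is an equilibrium on the boundary of the positive octant; the argument in the proof of Lemma~\ref{lem:contG_8} (applied at the right endpoint rather than the left) shows that the only boundary equilibria compatible with the limiting equations on the $G_8$-branch are $\hat{G}_6$ and $\hat{G}_7$, while $G_4$ is excluded by \eqref{M3a}. Since $S(K)$ is strictly decreasing along the branch and $S(\hat{K}_1)=\hat{S}_1>\hat{S}_2$ by \eqref{S1S2}, the limit cannot be $\hat{G}_6$, so $G^*=\hat{G}_7$, which forces $K_2=\hat{K}_2$. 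If $K_2=\infty$ one derives a contradiction by noting that the branch must eventually pass through the value $S=\hat{S}_2$, and by Lemma~\ref{lemma linearization of G_8 at K_2} such a crossing is only possible at $K=\hat{K}_2<\infty$. In case (ii), $\eta_2^*<1$ means $\hat{G}_7$ does not exist as an equilibrium in the positive octant, so the same dichotomy rules out a finite $K_2$, and hence $K_2=\infty$; Lemma~\ref{lemma: interval properties og G_8}(iv) then guarantees that the branch is well-defined on all of $(\hat{K}_1,\infty)$.

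For the ``all coexistence equilibria lie on this branch'' statement I would invoke Lemma~\ref{lem:contG_8}: every coexistence equilibrium belongs to a maximal branch whose left endpoint is $\hat{G}_6$. By the uniqueness part of Lemma~\ref{lemma about bifurcation at S_1} (``all equilibrium points in a small neighborhood of $G_6(\hat{K}_1)$ are exhausted by two branches $G_6(K)$ and $G_8(K)$''), the emanating coexistence branch is unique in a neighbourhood of $\hat{G}_6$, hence by the uniqueness of continuation guaranteed by \eqref{May1aa} and the implicit function theorem this neighbourhood branch extends to precisely the global branch constructed above. The main obstacle in the argument is the rigorous exclusion of $K_2=\infty$ in case (i) and of a second ``rogue'' branch of coexistence points; both are handled by the same monotonicity-plus-uniqueness-of-local-bifurcation mechanism, but it is essential that assumption \eqref{May1aa} holds at \emph{both} bifurcation points $\hat{S}_1$ and $\hat{S}_2$, which is exactly what Assumption II secures.
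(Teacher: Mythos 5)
Your overall architecture uses the same three ingredients as the paper (the local bifurcation Lemmas~\ref{lemma about bifurcation at S_1} and \ref{lemma linearization of G_8 at K_2}, the continuation Lemma~\ref{lemma: interval properties og G_8}, and Lemma~\ref{lem:contG_8}), and your uniqueness argument at the end — every coexistence equilibrium lies on a curve emanating from $\hat G_6$, which must coincide with the constructed branch by the exhaustion statement in Lemma~\ref{lemma about bifurcation at S_1} — is exactly the paper's. The structural difference is the direction of construction: you grow the branch forward from $\hat G_6$ and must then prove it terminates at $\hat G_7$, whereas the paper starts from the local branch produced by Lemma~\ref{lemma linearization of G_8 at K_2} for $K$ slightly below $\hat K_2$ and continues it \emph{leftward}; Lemma~\ref{lem:contG_8} then pins the left endpoint to $\hat G_6$, so both endpoints come for free and no separate termination argument is needed.

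The genuine gap is your exclusion of $K_2=\infty$ in case (i). You assert that the branch ``must eventually pass through the value $S=\hat S_2$,'' but Lemma~\ref{lemma: interval properties og G_8} only gives that $S(K)$ is decreasing and bounded below, so a priori it could converge to a limit strictly above $\hat S_2$; nothing you cite forces the crossing. The follow-up claim that ``by Lemma~\ref{lemma linearization of G_8 at K_2} such a crossing is only possible at $K=\hat K_2$'' is also not what that lemma says: it describes the local structure of equilibria in a neighbourhood of the point $\hat G_7$, not the set of $K$ for which a coexistence equilibrium can have $S$-coordinate equal to $\hat S_2$. To close this case along your route you would need an independent argument, for instance that by Lemma~\ref{lemma: interval properties og G_8}(iv) a branch surviving to $K=\infty$ would converge to an equilibrium of the limit system, whose $I_1$-component is $\frac{r}{\Delta_\alpha}(A_2-\eta_2)+O(\bar\gamma)$ and hence negative when $\eta_2^*>1$ — a contradiction. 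The paper's reversed construction sidesteps this issue entirely, which is why it is the cleaner proof; as written, your case (i) is incomplete at precisely this step.
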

\begin{proof} (i) By Lemma \ref{lemma linearization of G_8 at K_2} there is a branch of coexistence equilibrium points ending at $\hat{G}_7$, $K=\hat{K}_2$ and defined for small $\hat{K}_2-K>0$. By Lemma \ref{lem:contG_8} it can be continued to the interval $(\hat{K}_1,\hat{K}_2)$ and the limit when $K\to \hat{K}_1$ is equal to $\hat{G}_6$. If we take any coexistence equilibrium then by Lemma \ref{lem:contG_8} it must lie on an equilibrium curve starting at $\hat{G}_6$. Then by uniqueness in Lemma \ref{lemma about bifurcation at S_1} this curve must coincide with the coexistence equilibrium branch constructed in the beginning.

(ii) In this case there is no bifurcation point $\hat{G}_7$ and the proof repeats with some simplifications the proof of (i).

\end{proof}

\section{Stability of coexistence equilibrium points}

\subsection{Auxiliary assertion}

Let $Q$ and $q$ be two positive constants. We introduce the set of $Y=(Y_1,Y_2,Y_3,Y_4)$
\begin{equation*}\label{M14a}
{\mathcal Y}=\{Y:Y_k\leq Q \,  \min( Y_1,Y_4)\geq q,Y_2+Y_3\geq q,\;\min(Y_2, Y_3)\geq 0\}.
\end{equation*}
Consider the matrix depending on the parameters $Y$ and $K$:
\begin{equation*}\label{M14aa}
{\mathcal M}={\mathcal M}(Y,K)={\rm diag}(Y_1,Y_2,Y_3,Y_4)M,\;\;M=\left(\begin{matrix}-\frac{r}{K}&-\alpha_1&-\alpha_2&-\alpha_3\\
\alpha_1&0&0&-\eta_1\\
\alpha_2&0&0&-\eta_2\\
\alpha_3&\eta_1&\eta_2&0
\end{matrix}\right).
\end{equation*}
Let also $\lambda_k=\lambda_k(Y,K)$, $k=1,2,3,4$, be their eigenvalues numerated according to the order $|\lambda_1|\geq |\lambda_2|\geq |\lambda_3|\geq |\lambda_4|$.
In the next lemma we give some more information about the first three eigenvalues.
\begin{lemma}\label{lemma: biggest eigenvalue less then zero} Let $0<K_1<K_2$. Then

\begin{equation}\label{M15a1}
\Xi=\max_{j=1,2,3}\max_{Y\in {\mathcal Y}}\max_{K_1\leq K\leq K_2} \Re\lambda_j(Y,K)<0.
\end{equation}

\end{lemma}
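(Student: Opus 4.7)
The plan is to exploit a nearly-antisymmetric structure of $M$ together with compactness. The key structural observation is that the symmetric part of $M$ is rank one and negative semidefinite:
\[
\tfrac{1}{2}(M + M^{T}) = -\tfrac{r}{K}\, e_1 e_1^{T},
\]
where $e_1$ denotes the first standard basis vector; equivalently, $M$ is a rank-one dissipative perturbation of an antisymmetric matrix, which will confine the spectrum of $\mathcal M$ to the closed left half-plane pointwise.

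First I treat the interior case $Y_2, Y_3 > 0$. Then $D := \mathrm{diag}(Y_1, Y_2, Y_3, Y_4)$ is positive definite, so $\mathcal M = DM$ is similar to $D^{1/2} M D^{1/2}$, whose symmetric part equals $-\tfrac{r}{K} Y_1 e_1 e_1^{T}$. For any unit eigenvector $v$ with eigenvalue $\lambda$, the Rayleigh formula yields $\Re \lambda = -\tfrac{r}{K} Y_1 |v_1|^2 \le 0$. To rule out equality, I will assume $v_1 = 0$ and show it forces $v \equiv 0$: rows 2 and 3 of the eigenvalue equation yield (for $\lambda \ne 0$) $v_2, v_3$ proportional to $v_4/\lambda$; substituting into row 4 gives $\lambda^2 = -Y_4(\eta_1^2 Y_2 + \eta_2^2 Y_3) < 0$ (using $Y_4 \ge q$ and $Y_2 + Y_3 \ge q$), whereas substituting into row 1 gives $\lambda = \alpha_3^{-1}(\alpha_1\eta_1 Y_2 + \alpha_2\eta_2 Y_3) \ge 0$; these are incompatible. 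The degenerate cases $\lambda = 0$ and $v_4 = 0$ are excluded using $\Delta_\alpha \ne 0$ from~\eqref{deltamu}, so all four eigenvalues have strictly negative real parts.

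Next I handle the boundary $Y_2 = 0$ (the case $Y_3 = 0$ is symmetric). Row 2 of $\mathcal M$ vanishes identically, and Laplace expansion along that row gives
\[
\det(\lambda I - \mathcal M) = \lambda \cdot \det(\lambda I - M_3), \qquad M_3 := \mathrm{diag}(Y_1, Y_3, Y_4)\, M_3',
\]
where $M_3'$ has the same dissipative structure $M_3' + (M_3')^{T} = -\tfrac{2r}{K} e_1 e_1^{T}$ and nonvanishing determinant $\det M_3' = -\tfrac{r}{K}\eta_2^2$. Applying the Rayleigh argument of the previous step to $M_3$ (using $Y_1, Y_3, Y_4 \ge q$) shows the three eigenvalues of $M_3$ have strictly negative real parts. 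Thus $\lambda_4 = 0$ is a simple eigenvalue of $\mathcal M$ and the top three $\lambda_1, \lambda_2, \lambda_3$ coincide with those of $M_3$, all with $\Re < 0$.

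Finally, the uniform bound~\eqref{M15a1} follows from compactness. The set $\mathcal Y \times [K_1, K_2]$ is compact and the spectrum of $\mathcal M(Y, K)$ depends continuously on $(Y, K)$ as a multiset. The main obstacle here is that the labelling of eigenvalues by decreasing modulus can jump across the boundary $Y_2 Y_3 = 0$, where a new zero eigenvalue joins the spectrum and occupies the slot $\lambda_4$. I resolve this by a subsequence/pigeon-hole argument: assuming for contradiction $\Xi \ge 0$, extract a convergent sequence $(Y^{(n)}, K^{(n)}) \to (Y^*, K^*)$ together with a constant index $j^* \in \{1,2,3\}$ and a convergent eigenvalue selection $\lambda_{j^{(n)}} \to \lambda^*$; since the three eigenvalues of $\mathcal M(Y^*, K^*)$ with strictly negative real parts are isolated from any putative zero by the pointwise analysis, they account for the top-three slots in any sufficiently small neighborhood of $(Y^*, K^*)$, so $\lambda^*$ must have $\Re < 0$, contradicting $\Re \lambda^* = \Xi \ge 0$.
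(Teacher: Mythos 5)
Your proof is correct and follows essentially the same route as the paper's: the weighted Rayleigh-quotient identity giving $\Re\lambda=-\tfrac{r}{K}Y_1|v_1|^2\le 0$, the algebraic exclusion of purely imaginary eigenvalues (using $\Delta_\alpha\neq 0$ for the $\lambda=0$ case), the reduction to the $3\times 3$ problem when $Y_2$ or $Y_3$ vanishes, and a compactness argument. Your explicit handling of the modulus-ordering of eigenvalues across the boundary $Y_2Y_3=0$ is more careful than the paper's one-line appeal to continuity of the spectrum, but it is the same argument in substance.
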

\begin{proof}
First assume that all components of $Y$ are non-zero. Let $\lambda\in\Bbb C$ be an eigenvalue of ${\mathcal M}$, i.e.
\begin{equation}\label{M15a}
{\mathcal M}X=\lambda X,\;\;X=(X_1,X_2,X_3,X_4)^T\in\Bbb C^4,\;\;X\neq 0.
\end{equation}
This implies
\begin{equation*}
\Re ({\mathcal M}X,D^{-1} X)=-\frac{r}{K}|X_1|^2=\Re\lambda(D^{-1}X,X),\end{equation*}
where $D={\rm diag}(Y_1,Y_2,Y_3,Y_4)$ and $(\cdot,\cdot)$ is the inner product in $\Bbb C^4$. Therefore
$$
\Re\lambda=-\frac{r}{K}\frac{|X_1|^2}{(D^{-1}X,X)}.
$$
This gives $\Re\lambda\leq 0$. Assume now that $\lambda=i\tau$, $\tau\in\Bbb R$, which implies $X_1=0$. Then (\ref{M15a}) implies
\begin{eqnarray}\label{M15aa}
&&\alpha_1X_2+\alpha_2X_3+\alpha_3X_4=0\nonumber\\
&&-\eta_1Y_2X_4=\lambda X_2,\;\;-\eta_2Y_3X_4=\lambda X_3\nonumber\\
&&Y_4(\eta_1X_2+\eta_2X_3)=\lambda X_4.
\end{eqnarray}
If $\lambda=0$ then $X_4=0$ and from the first and last equations in (\ref{M15aa}) we get that $X_2=X_3=0$. If $X_4=0$ and $\lambda\neq 0$ then from the middle equations in (\ref{M15aa}) we obtain $X_2=X_3=0$. Consider the case when $\lambda\neq 0$ and $X_4\neq 0$. Expressing $X_2$ and $X_3$ through $X_4$ from the middle equations in (\ref{M15aa}) and putting them in the first equation, we get
$$
X_4\Big(-\frac{\alpha_1\eta_1Y_2+\alpha_2\eta_2Y_3}{\lambda}+\alpha_3\Big)=0,
$$
which implies $X_4=0$. Thus we have shown that there are no eigenvalues of ${\mathcal M}$ on the imaginary line, i.e. $\Re\lambda_j<0$, $j=1,2,3,4$, provided all $Y_j$ ar positive.

Next consider the case $Y_2=0$. Then one eigenvalue of ${\mathcal M}$ is zero and the remaining  three  can be found from the eigenvalue problem
\begin{equation}\label{M15b}
{\bf diag}(Y_1,Y_3,Y_4)\left(\begin{matrix}-\frac{r}{K}&-\alpha_2&-\alpha_3\\
\alpha_2&0&-\eta_2\\
\alpha_3&\eta_2&0
\end{matrix}\right)(X_1,X_3,X_4)^T=\lambda (X_1,X_3,X_4)^T.
\end{equation}
Similar to the eigenvalue problem (\ref{M15a}) one can show that $\Re\lambda <0$ for (\ref{M15b}).

The argument in the case $Y_3=0$ is the same as in the case $Y_2=0$. Thus we have shown that for all
$(Y,K)\in {\mathcal Y}$, $\Re\lambda_j(Y,K)<0$, $j=1,2,3$. Since the eigenvalues continuously depend on $(Y,K)$ and the set ${\mathcal Y}$ is compact, we arrive at (\ref{M15a1}).

\end{proof}

\subsection{Local stability in the case $\eta_1^*>\eta_2^*>1$}\label{sec:locstab}

The main stability result for the equilibrium points branch in Lemma \ref{lemma: biggest eigenvalue less then zero} is the following

\begin{proposition}\label{pro:eta_2-1} Let $\eta_1^*>\eta_2^*>1$ and $G_8(K)$, $\hat{K}_1\leq K\leq\hat{K}_2$ be the branch constructed in Lemma \ref{lemma:exG8} (i). Then there exists a constant $\delta$ depending only on $\alpha_j$, $j=1,2,3$, and $\eta_1$, $\eta_2$ such that if $\bar{\gamma}\leq \delta$ then all points on this branch for $\hat{K}_1< K<\hat{K}_2$ are locally stable.

\end{proposition}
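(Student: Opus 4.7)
The plan is to view the Jacobian $J_8=\mathrm{diag}(S,I_1,I_2,I_{12})B$ at a coexistence equilibrium as a perturbation of the matrix $\mathrm{diag}(Y)M$ studied in Lemma~\ref{lemma: biggest eigenvalue less then zero}. Indeed, the only entries in which $B$ differs from $M$ sit in the bottom row and equal $\bar\gamma r_2$, $\bar\gamma r_1$ and $-\bar\gamma r_1 r_2$ with $r_1=I_1/I_{12}$, $r_2=I_2/I_{12}$ as in \eqref{r1r2}. Once the ratios $r_1,r_2$ are controlled uniformly along the branch, one has $\|B-M\|=O(\bar\gamma)$ uniformly in $K\in[\hat K_1,\hat K_2]$, and local stability will follow from continuity of eigenvalues combined with the spectral gap asserted by Lemma~\ref{lemma: biggest eigenvalue less then zero}.

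The first step is to produce constants $q,Q>0$ (independent of $\bar\gamma$) such that the equilibrium coordinate vector $(S(K),I_1(K),I_2(K),I_{12}(K))$ lies in the set $\mathcal Y$ of Lemma~\ref{lemma: biggest eigenvalue less then zero} for every $K\in[\hat K_1,\hat K_2]$. Upper bounds $Y_j\le Q$ are immediate from continuity on a compact interval. For the lower bounds on $S$ and $I_{12}$ I invoke Lemma~\ref{lemma: interval properties og G_8}(i), which gives monotonicity in $K$, combined with the explicit positive limits at $\hat G_6$ and $\hat G_7$ established in Lemmas~\ref{lemma about bifurcation at S_1} and \ref{lemma linearization of G_8 at K_2}; thus $Y_1,Y_4\ge q$. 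For $I_1+I_2$ the boundary values $I_1(\hat K_1)>0$ and $I_2(\hat K_2)>0$, together with strict positivity of both components at every interior point of $(\hat K_1,\hat K_2)$, imply that the continuous function $K\mapsto I_1(K)+I_2(K)$ is strictly positive on the closed interval, hence bounded below by some $q>0$. Consequently $r_1,r_2\le Q/q$, and $\|B-M\|\le C\bar\gamma$ for some $C$ that depends only on the model parameters.

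Combining this with Lemma~\ref{lemma: biggest eigenvalue less then zero}, which provides a uniform bound $\Xi<0$ on the real parts of three of the four eigenvalues of $\mathrm{diag}(Y)M$ throughout $\mathcal Y\times[\hat K_1,\hat K_2]$, and applying continuity of eigenvalues in the matrix entries, I conclude that there exists $\delta>0$ such that for all $\bar\gamma\le\delta$ three eigenvalues of $J_8$ satisfy $\mathrm{Re}\,\lambda_j\le\Xi/2<0$ uniformly along the branch. The fourth eigenvalue is handled via the determinant: by \eqref{May1a} and \eqref{May1aa},
\begin{equation*}
\det J_8 = S\,I_1\,I_2\,I_{12}\det B = S\,I_1\,I_2\,\frac{\partial P(S)}{\partial S}>0
\end{equation*}
on $(\hat K_1,\hat K_2)$. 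Since $J_8$ is real, any non-real eigenvalues occur in conjugate pairs, which forces the remaining fourth eigenvalue to be real. The product of the three controlled eigenvalues is negative (either three real negatives or one real negative together with a conjugate pair $a\pm ib$, $a<0$, giving product $(a^2+b^2)c<0$), hence $\lambda_4=\det J_8/(\lambda_1\lambda_2\lambda_3)<0$, and $J_8$ is stable.

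The principal obstacle I anticipate is the uniform lower bound on $I_1+I_2$ on the interior of the branch: one must exclude the possibility that either $I_1$ or $I_2$ vanishes at some intermediate $K\in(\hat K_1,\hat K_2)$. This is ruled out because such a point would produce an equilibrium on the boundary of the positive octant distinct from $\hat G_6$ or $\hat G_7$, and by Lemma~\ref{lem:contG_8} together with the uniqueness statements in Lemmas~\ref{lemma about bifurcation at S_1} and \ref{lemma linearization of G_8 at K_2} the branch $G_8(K)$ can only reach the boundary at $K=\hat K_1$ or $K=\hat K_2$.
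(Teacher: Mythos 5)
Your overall strategy coincides with the paper's: write $J_8=\mathrm{diag}(S,I_1,I_2,I_{12})(M+O(\bar\gamma))$, place the coordinate vector in the compact set $\mathcal Y$ of Lemma~\ref{lemma: biggest eigenvalue less then zero} so that three eigenvalues have real part at most $\Xi/2<0$ for $\bar\gamma$ small, and finish with $\det J_8=SI_1I_2\,\partial_S P>0$. Two small slips along the way: $B$ and $M$ also differ in the entries $-\gamma_1$, $-\gamma_2$ of rows 2 and 3, not only in the bottom row (harmless, since these are also $O(\bar\gamma)$); and the fourth eigenvalue need not be real --- but if it is non-real its conjugate is one of the three controlled eigenvalues, so it has negative real part in that case anyway, and your determinant argument covers the real case.

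The genuine weak point is the lower bound $I_1(K)+I_2(K)\ge q$. Your compactness argument produces a constant $q=q(\bar\gamma)$: for each fixed $\bar\gamma$ the function is continuous and positive on the closed interval, hence bounded below, but nothing in the argument prevents this bound from degenerating as $\bar\gamma$ varies. Since the spectral gap $\Xi$ of Lemma~\ref{lemma: biggest eigenvalue less then zero} depends on $q$ and $Q$, and the admissible size $\delta$ of $\bar\gamma$ is chosen in terms of $\Xi$, a $\bar\gamma$-dependent $q$ makes the choice of $\delta$ circular and fails to produce a $\delta$ depending only on $\alpha_j$ and $\eta_j$, which is precisely what the proposition asserts. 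The paper closes this by reading the bound off the equilibrium equations: the last equation of \eqref{Equilib1} gives $\eta_1I_1+\eta_2I_2=\alpha_3(\sigma_3-S)-\bar\gamma I_1I_2/I_{12}$, and combining this with $S=\Delta_\mu/\Delta_\alpha+O(\bar\gamma)$ yields $\eta_1I_1+\eta_2I_2=rA_1A_2(\eta_1^*-\eta_2^*)/(\alpha_3\Delta_\alpha)+O(\bar\gamma)$, whose leading term is a positive constant independent of $\gamma$; the same explicit asymptotics give $\gamma$-uniform lower bounds for $S$ and $I_{12}$. With that replacement (and noting that a fixed compact $K$-interval containing all the intervals $[\hat K_1,\hat K_2]$ for small $\bar\gamma$ can be used in Lemma~\ref{lemma: biggest eigenvalue less then zero}), your proof matches the paper's.
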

\begin{proof} Consider equilibrium points $G_8(K)=(S(K),I_1(K),I_2(K),I_{12}(K))$, $K\in [\hat{K}_1,\hat{K}_2]$. By Corollary 2 of \cite{part1} and Lemma~\ref{lemma:exG8} all of the components are non-negative and bounded by a certain constant independent of $K$ and $\gamma_1,\,\gamma_2$. Solving for $S$ and $I_{12}$ in the second and third of \eqref{Equilib1} gives
\begin{equation}\label{M15ba}
S(K)=\frac{\Delta_\mu}{\Delta_\alpha}+O(\bar{\gamma}),\;\;I_{12}(K)=\frac{\alpha_1\alpha_2(\sigma_2-\sigma_1)}{\Delta_\alpha}
+O(\bar{\gamma}).
\end{equation}
Furthermore, from the last equation in \eqref{Equilib1} we get
$$
\eta_1I_1+\eta_2I_2=\alpha_3(\sigma_3-S)-\bar{\gamma}\frac{I_1I_2}{I_{12}}
$$
which implies due to \eqref{M15ba}
\begin{equation*}\label{M15bb}
\eta_1I_1+\eta_2I_2=\frac{rA_1A_2(\eta_1^*-\eta_2^*)}{\alpha_3\Delta_\alpha}+O(\bar{\gamma}).
\end{equation*}
We will keep the notation ${\mathcal Y}$ for our case ($Y=(S,I_1,I_2,I_{12})$), where
$$
q=\frac{1}{2}\min\Big(\frac{\Delta_\mu}{\Delta_\alpha},\frac{\alpha_1\alpha_2(\sigma_2-\sigma_1)}{\Delta_\alpha},
\frac{1}{\min(\eta_1,\eta_2)}\frac{rA_1A_2(\eta_1^*-\eta_2^*)}{\alpha_3\Delta_\alpha}\Big),
$$
$$
Q=\max\Big(\sigma_3,\frac{r}{\sigma_1}\Big),
$$
and $\bar{\gamma}$ is kept sufficiently small. Then we can use Lemma \ref{lemma: biggest eigenvalue less then zero}, where $K_j=\hat{K}_j$, $j=1,2$.

We introduce two subsets of ${\mathcal Y}\times [\hat{K}_1,\hat{K}_2]$. The first one $\hat{\mathcal Y}_1$ consists of all $(Y;K)\in {\mathcal Y}\times [\hat{K}_1,\hat{K}_2] $ such that $\Re\lambda_1\geq \Xi/2$, where $\Xi$ is the constant from Lemma \ref{lemma: biggest eigenvalue less then zero} (in our case it depends only on $\alpha_j$, $j=1,2,3,$ and $\eta_1$, $\eta_2$. The second set $\hat{\mathcal Y}_2$ consists of all $(Y;K)\in {\mathcal Y}\times [\hat{K}_1,\hat{K}_2] $ such that $\Re\lambda_1\leq \Xi/2$. Introduce the contours
$$
\Gamma_1=\{\lambda\in\Bbb C:\Re\lambda=\Xi/4, |\Im\lambda|\leq C,\;\lambda-\Xi/4=Ce^{i\varphi},\varphi\in (\pi/2,3\pi/2)\}
$$
and
$$
\Gamma_2=\{\lambda\in\Bbb C:\Re\lambda=3\Xi/4, |\Im\lambda|\leq C,\;\lambda-3\Xi/4=Ce^{i\varphi},\varphi\in (\pi/2,3\pi/2)\},
$$
where $C$ is sufficiently large.
Put
$$
a_k=\max_{\hat{\mathcal Y}_k}\max_{\lambda\in\Gamma_k}||({\mathcal M}-\lambda)^{-1}||,\;\;k=1,2.
$$

By Lemma 6 there are at least 3 eigenvalues of $\mathcal{M}$ with $\Re \lambda\leq \Xi$ Consider two cases $(i)$ the remaining eigenvalue satisfies $\Re \lambda<\frac{5}{8}\Xi$ or $(ii)$ it satisfies $\Re \lambda\geq \frac{5}{8}\Xi$. Since the norm of the matrix
\begin{equation*}
    \mathcal{N}=\mathrm{diag}(S,I_1,I_2,I_{12})\left(\begin{matrix}
    0 & 0 & 0 & 0
    \\
    0 & 0 & -\gamma_1 & 0
    \\
    0 & -\gamma_2 & 0 & 0
    \\
    0 & \bar \gamma r_2 & \bar \gamma r_1 & -\bar \gamma r_1 r_2
    \end{matrix}\right)
\end{equation*}
is estimated by $C_1\bar \gamma$ with $C_1$ independent on $\gamma$ and $K$ we conclude that by Rouche's theorem the number of eigenvalues inside $\Gamma_2$ of the matrix $\mathcal{M}$ and $\mathcal{M+N}$ is the same for small $\bar \gamma$ in the case (i). Similarly we have that in the case $(ii)$ the number of eigenvalues of $\mathcal{M}$ and $\mathcal{M+N}$ is the same inside the contour $\Gamma_1$ for small $\bar \gamma$ and this number is equal to $4$.

This implies that for small $\bar{\gamma}$ there are at least three eigenvalues of the Jacobian matrix $J_8$ with negative real part on the branch $G_8(K)$. Since $$\det J_8(G_8(K))>0 \quad \text{for $K\in (\hat{K}_1,\hat{K}_2)$}$$
we conclude that all eigenvalues of $J_8(G_8(K))$ must have negative real part. This proves the proposition.

\end{proof}

\subsection{Instability for large $K$}

In this section we assume that
\begin{equation}\label{M18a}
\eta_1^*>1>\eta_2^*.
\end{equation}
According to Lemma \ref{lemma:exG8} there exists a branch $G_8(K)$, $K\in [\hat{K}_1,\infty )$, of coexistence equilibrium points starting from $G_8( \hat{K}_1)=\hat{G}_6$.
For $K=\infty$ and $\gamma=0$ the interior point has the coordinates
\begin{equation*}
 G_8 (\infty)|_{\gamma=0}=  (S^*,I_1^*,I^*_2,I^*_{12})=\Big(\frac{\Delta_\mu}{\Delta_\alpha},\frac{r}{\Delta_\alpha}(A_2-\eta_2),
  \frac{r}{\Delta_\alpha}(\eta_1-A_1),\frac{r}{\Delta_\alpha}A_3\Big).
\end{equation*}
All eigenvalues of the corresponding Jacobian matrix lie on the imaginary axis.

For $K=\infty$ and small $\gamma>0$ the interior point has the coordinates $G_\infty(\gamma)= G_\infty (0)+O(|\gamma|)$, where $\gamma=(\gamma_1,\gamma_2)$. Our goal is to analyze the location of eigenvalues of the Jacobian matrix when $\gamma$ is small.

The characteristic polynomial of the Jacobian matrix of the interior point is (up to a positive factor)
$$
   \frac{1}{SI_1I_2I_{12}} \det(J_8-\lambda I)=p(\lambda):=\left|\begin{matrix}
    -\frac{\lambda}{S} & -\alpha_1 & -\alpha_2 & -\alpha_3
    \\
    \alpha_1 & -\frac{\lambda}{I_1} & -\gamma_1 & -\eta_1
    \\
    \alpha_2 & -\gamma_2 & -\frac{\lambda}{I_2} & -\eta_2
    \\
    \alpha_3 & \eta_1+\bar \gamma r_2 & \eta_2+\bar \gamma r_1 & -\bar \gamma r_1r_2-\frac{\lambda}{I_{12}}
    \end{matrix}\right|
$$
where $r_i$ are defined in \eqref{r1r2}. It is clear that the polynomial $p$ is monic. The necessary condition for stability of the polynomial $p$ is the positivity of all its coefficients. Let us evaluate  the coefficient $p_1$ of the $\lambda$ term and show that it can be negative for certain choice of  parameters  (we note that for $\gamma=0$ this coefficient is zero). This will imply that some of all eigenvalues must have positive real part. We have
\begin{align*}
  p_1=&-
    \Big(
  \frac{1}{S}
  \left|\begin{matrix}
  0 & -\gamma_1 & -\eta_1
  \\
  -\gamma_2 & 0 & -\eta_2
  \\
  \eta_1+\bar \gamma r_2 & \eta_2+\bar \gamma r_1 & -\bar \gamma r_1r_2
  \end{matrix}\right|+\frac{1}{I_1}
  \left|
  \begin{matrix}
  0 & -\alpha_2 & -\alpha_3
  \\
  \alpha_2 & 0 & -\eta_2
  \\
  \alpha_3 & \eta_2+\bar \gamma r_1 & -\bar \gamma r_1r_2
  \end{matrix}
\right|
\\
&+\frac{1}{I_2}\left|
\begin{matrix}
0 & -\alpha_1 & -\alpha_3
\\
\alpha_1 & 0 &-\eta_1
\\
\alpha_3 & \eta_1+\bar \gamma r_2 & -\bar \gamma r_1r_2
\end{matrix}
\right|
+\frac{1}{I_{12}}
\left|
\begin{matrix}
0 & -\alpha_1 & -\alpha_2
\\
\alpha_1 & 0 & -\gamma_1
\\
\alpha_2 & -\gamma_2 & 0
\end{matrix}
\right|
    \Big)
   \\
    &=\bar \gamma\left(
    -\frac{ \eta_1\eta_2}{S} + \frac{ r_1\alpha_2(\alpha_3+\alpha_2r_2)}{I_1} +\frac{ \alpha_1r_2(\alpha_1r_1+\alpha_3)}{I_2}-\frac{ \alpha_1\alpha_2}{I_{12}}
    \right)+O(\bar \gamma^2)
    \\
    &=\bar \gamma\left(
    -\frac{ \eta_1\eta_2}{S} + \frac{ \alpha_2(\alpha_3+\alpha_2r_2)+\alpha_1(\alpha_1r_1+\alpha_3)-\alpha_1\alpha_2}{I_{12}}
    \right)+O(\bar \gamma^2)
\end{align*}

Plugging in the values of $S,I_{12},r_1,r_2,$  for $K=\infty$ and $\gamma=0$ we continue the above equalities
\begin{align*}
  p_1=&
\bar \gamma\Delta_\alpha\left(-\frac{\eta_1\eta_2 }{\Delta_\mu}+\frac{(\alpha_2+\alpha_1)\alpha_3-\alpha_1\alpha_2}{rA_3}+\frac{\alpha_2^2(\eta_1-A_1)+\alpha_1^2(A_2-\eta_2)}{rA_3^2}\right)+O(\gamma^2)
\\
=&
\bar \gamma\Delta_\alpha\left(-\frac{\eta_1\eta_2}{\Delta_\mu}+\frac{\alpha_2^2\eta_1-\alpha_1^2\eta_2}{rA_3^2}+\frac{(\alpha_2+\alpha_1)\alpha_3-\alpha_1\alpha_2}{rA_3}+\frac{\alpha_1^2A_2-\alpha_2^2A_1}{rA_3^2}\right)+O(\gamma^2)
\end{align*}
with
\begin{align*}
    b-\mu_0=1
    \\
    \mu_i=1
    \\
    \alpha_1=10,\;\alpha_2=9.9,\;\alpha_3=1,
\end{align*}
 \eqref{sigma_i ordering} and \eqref{deltamu} is satisfied and we get
\begin{align*}
    A_1=9
    \\
    A_2=0.1
    \\
    A_3=8.9.
\end{align*}
We can now choose $\eta_1$ and $\eta_2$ such that $\eta_1*>1>\eta_2^*$ and $\eta_1^*,\eta_2^*\approx 1$. For these values Lemma~\ref{lem:contG_8} tells us that there exist a coexistence equilibrium branch defined for infinitely large $K$. On the other hand the coefficient of the $\lambda^1$ term is approximately
$$
    SI_1I_2I_{12}\gamma\Delta_\alpha(-\frac{10*9.9}{0.1}+\frac{19.9-9.9}{0.1}+0)=SI_1I_2I_{12}\gamma\Delta_\alpha(-890)<0
$$
and so when $K$ is sufficiently large and $\gamma$ is sufficiently small this equilibrium branch is unstable.

\subsection{Hopf bifurcation}

In this section we assume that (\ref{M18a}) is satisfied. Thus there exists a branch of coexistence equilibrium points $G_8(K)$ defined for $K>\hat{K}_1$. We assume also that the parameters $\alpha_1,\,\alpha_2,\,\alpha_3$ and $\eta_1,\,\eta_2$ are chosen such that the stability is lost when $K\bar \gamma$ is large.
Since the point $G_8(K)$ is stable when $K$ is close to $\hat K_1$ there exist a point $K=K_c$ where the local stability of $G_8$ is lost. Since the trace of the Jacobian matrix is always negative the eigenvalues can only reach the imaginary axis in pairs.
If we assume that the derivative of their real part at $K=K_c$ is positive then there is a simple Hopf bifurcation so for $K$ close to $K_c$ there are periodic oscillations, see \cite{liu1994criterion}.


\subsection{Local stability  in the case $\eta_1^*>1>\eta_2^*$}\label{sec: Local stability in the case eta_1^*>1>eta_2^*}
\begin{theorem}\label{the:Kgamma1} Let $\eta_1^*>1>\eta_2^*$ and let $G_8(K)$, $K\in (\hat{K}_1,\infty)$ be the branch of equilibrium points starting at $\hat{G}_6$.
There exists a constant $\omega>0$ depending on $\alpha_1,\alpha_2,\alpha_3$ and $\eta_1,\eta_2$ such that if $K\bar \gamma\leq\omega$ for $K\in (\hat{K}_1,\infty)$, then the inner equilibrium points $G_8(K)$ are locally stable.
\end{theorem}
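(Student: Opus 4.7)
The plan is to mimic the Rouche-perturbation scheme of Proposition~\ref{pro:eta_2-1}, but over the unbounded branch $K \in (\hat K_1,\infty)$. The main obstacle is the degeneration of the spectral gap of the unperturbed matrix $\mathcal M = DM$ as $K\to\infty$: at $K=\infty$ and $\gamma=0$ the matrix $M$ becomes skew-symmetric and $\mathcal M$ becomes similar to the skew-Hermitian matrix $D^{1/2}M D^{1/2}$, whose spectrum lies entirely on the imaginary axis. The hypothesis $K\bar\gamma\le\omega$ is exactly the compensating scaling, since the eigenvalues of $\mathcal M(Y(K),K)$ sit at distance of order $1/K$ from the imaginary axis while the perturbation $\mathcal N$ has norm of order $\bar\gamma$.

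First I would verify that along $G_8(K)$, $K\in(\hat K_1,\infty)$, the components $(S,I_1,I_2,I_{12})$ lie in a fixed compact subset of $(0,\infty)^4$ for all sufficiently small $\bar\gamma$. Under $\eta_1^*>1>\eta_2^*$ the explicit value of $G_8(\infty)|_{\gamma=0}$ displayed in the preceding subsection has all four components strictly positive; combined with the monotonicity of $S$ and $I_{12}$ from Lemma~\ref{lemma: interval properties og G_8}(i) and a continuity argument in $\bar\gamma$, this gives the compactness. Boundedness of the ratios $r_1,r_2$ of \eqref{r1r2} follows, and as in Proposition~\ref{pro:eta_2-1} one has $\|\mathcal N\|\le C_1\bar\gamma$ uniformly in $K$, with $C_1=C_1(\alpha_i,\mu_i,\eta_i)$.

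Second I would quantify the spectral gap of $\mathcal M(Y(K),K)$. A short Pfaffian computation gives $\det M|_{K=\infty} = \Delta_\alpha^2>0$, so all four eigenvalues of $\mathcal M|_{K=\infty}$ are nonzero purely imaginary; inspection of the eigenvalue equation $Mv = \lambda D^{-1}v$ at $K=\infty$ rules out $v_1=0$ for any eigenvector. Combined with the identity
\begin{equation*}
\Re\lambda \;=\; -\frac{r\,|v_1|^2}{K\,(D^{-1}v,v)}
\end{equation*}
from the proof of Lemma~\ref{lemma: biggest eigenvalue less then zero}, this yields $|\Re\lambda_j(Y(K),K)|\ge c/K$ for all four eigenvalues and all $K\ge K_0$ large, with $c=c(\alpha_i,\eta_i)>0$. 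On the bounded initial interval $[\hat K_1,K_0]$, Lemma~\ref{lemma: biggest eigenvalue less then zero} applied to the strictly positive set $\{\min_j Y_j\ge q\}$ provides a uniform positive gap directly.

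Finally, close with Rouche's theorem. Enclose $\mathrm{spec}(\mathcal M)$ by a contour $\Gamma_K$ consisting of the vertical segment $\Re\lambda = -c/(2K)$ plus a large left half-disk; the distance from $\Gamma_K$ to the spectrum is at least $c/(2K)$. Since $\mathcal M$ at $K=\infty$ is similar to a skew-Hermitian matrix, its eigenvector basis has condition number bounded by $(\max_j Y_j / \min_j Y_j)^{1/2}$, and the additional $O(1/K)$ perturbation preserves this conditioning for $K\ge K_0$; hence $\|(\mathcal M-\lambda)^{-1}\|\le C_2 K$ on $\Gamma_K$. The Rouche condition $\|\mathcal N\|\cdot\|(\mathcal M-\lambda)^{-1}\|<1$ reduces to $C_1 C_2\, K\bar\gamma<1$, so $\omega:=1/(2C_1 C_2)$ works. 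Rouche applied to the homotopy $\mathcal M + t\mathcal N$, $t\in[0,1]$, keeps all four eigenvalues of $J_8$ inside $\Gamma_K$, proving local stability. The hardest step will be the uniform resolvent bound on $\Gamma_K$, and in particular the control of eigenvector conditioning through possible eigenvalue crossings of $\mathcal M|_{K=\infty}$; this can be addressed by working with Riesz spectral projectors rather than individual eigenvectors in the transitional regime $K\gtrsim K_0$.
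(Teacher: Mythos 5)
Your proposal takes a genuinely different route from the paper for the main (large-$K$) regime. The paper never introduces a contour or a resolvent bound there: it assumes a purely imaginary eigenvalue $\lambda=i\tau$ of the full Jacobian exists, solves the second and third eigenvector equations explicitly for $u_2,u_3$ in terms of $u_1,u_4$, deduces $|u_4|\le C|u_1|$ and then $|u_2|,|u_3|\le C|u_1|$, and feeds these bounds into the weighted real-part identity \eqref{realpartof(K+gamma)u,uin complicated form} to obtain $-\frac{r}{K}|u_1|^2+C_1\bar\gamma|u_1|^2=0$, impossible when $K\bar\gamma$ is small; a continuity argument in $\gamma$ from the case $\gamma=0$ then finishes. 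This sidesteps entirely the eigenvector-conditioning question that your Rouch\'e scheme must confront, at the price of some explicit algebra. Your scheme, if completed, would give essentially the same constant $\omega$ and arguably a cleaner geometric picture.

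However, as written your argument has a genuine gap exactly where you flag it: the resolvent bound $\|(\mathcal M-\lambda)^{-1}\|\le C_2K$ on $\Gamma_K$. The issue is not cosmetic. If the limiting skew-symmetric matrix $D^{1/2}M D^{1/2}$ at $K=\infty$ had a double nonzero eigenvalue, the rank-one perturbation $-\frac{rS}{K}e_1e_1^T$ could produce a near-defective pair at distance $O(1/K)$ from the imaginary axis, and a further perturbation of size $\bar\gamma$ could then move eigenvalues by $O(\sqrt{\bar\gamma})$; keeping them in the left half-plane would require $K^2\bar\gamma$ small, not $K\bar\gamma$ small, so ``working with Riesz projectors'' does not by itself rescue the stated scaling. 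The gap is closable, and in fact by an observation you already make: if $i\tau$, $\tau\neq0$, had a two-dimensional eigenspace for $DM|_{K=\infty}$ (the matrix is diagonalizable, being similar to a real skew-symmetric one), that eigenspace would contain a nonzero vector with first component zero, which your own computation excludes. Hence the nonzero eigenvalues $\pm i\tau_1,\pm i\tau_2$ are automatically simple, the eigenvector basis of $\mathcal M$ has uniformly bounded condition number for $K\ge K_0$ and $\bar\gamma$ small, and the bound $\|(\mathcal M-\lambda)^{-1}\|\le C/\mathrm{dist}(\lambda,\mathrm{spec}\,\mathcal M)\le C_2K$ on $\Gamma_K$ follows. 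You should state and prove this simplicity explicitly; without it the proof does not yield the claimed $K\bar\gamma\le\omega$ threshold.

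One smaller error: the claim that all four components of $G_8(K)$ lie in a fixed compact subset of $(0,\infty)^4$ over the \emph{whole} branch is false, since $I_2(K)\to0$ as $K\to\hat K_1^+$ (the branch starts at $\hat G_6$, where $I_2=0$). This is precisely why Lemma~\ref{lemma: biggest eigenvalue less then zero} is formulated on the set $\mathcal Y$ with $Y_2+Y_3\ge q$ rather than $\min_jY_j\ge q$, and why that lemma only controls three of the four eigenvalues, the fourth being recovered from $\det J_8>0$ as in Proposition~\ref{pro:eta_2-1}. Your treatment of the bounded interval $[\hat K_1,K_0]$ should be routed through that argument rather than through a uniform positivity assumption; the uniform positivity \eqref{M16aa} is legitimate only for $K\ge K_0$.
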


\begin{proof} In what follows in the proof we will denote by $c$ and $C$, possibly with indexes, various positive constants depending on $\alpha_1,\,\alpha_2,\,\alpha_3$ and $\eta_1,\,\eta_2$. The Jacobian matrix is equal to $J_8(K)=D(A+K+\Gamma)$, where $D={\rm diag}(S,I_1,I_2,I_{12})$,
\begin{equation*}
    K\!=\!\left(
    \begin{matrix}
    -\frac{r}{K} & 0 & 0 & 0
    \\
    0 & 0 & 0 & 0
    \\
    0 & 0 & 0 & 0
    \\
    0 & 0 & 0 & -\bar \gamma r_1r_2
    \end{matrix}\right),\;\;
    A\!=\!\left(
    \begin{matrix}
    0 & -\alpha_1 & -\alpha_2 & -\alpha_3
    \\
    \alpha_1 & 0 & 0 & -\eta_1
    \\
    \alpha_2 & 0 & 0 & -\eta_2
    \\
    \alpha_3 & \eta_1 & \eta_2 & 0
    \end{matrix}
    \right),\;
\Gamma\!=\!\left(
\begin{matrix}
0 & 0 & 0 & 0
\\
0 & 0 & -\gamma_1 & 0
\\
0 & -\gamma_2 & 0 & 0
\\
0 & \bar \gamma r_2 & \bar \gamma r_1 & 0
\end{matrix}
\right)
\end{equation*}
Consider the eigenvalue problem
\begin{equation}\label{theeigenvalueequation}
   D(A+K+\Gamma)u=\lambda u.
\end{equation}
If $\gamma=0$ then the eigenvalue of this problem lie in the half-plane $\Re\lambda<0$. If we show that the are no eigenvalues of (\ref{theeigenvalueequation}) with $\lambda=i\tau$, $\tau\in\Bbb R$, for all $\gamma$ and $K$ satisfying $K\bar{\gamma}\leq \omega$ then by continuity argument for eigenvalues we obtain the required result. Therefore let us assume that one of eigenvalues has the form $\lambda=i\tau,\;\tau\in\mathbb{R}$ and that no eigenvalue has positive real part. We will now show that the problem (\ref{theeigenvalueequation}) has only the trivial solution. For large $K$ and small $\gamma$ (say $K\geq K_*$ and $\bar{\gamma}\leq\bar{\gamma_*}$) we have
\begin{equation}\label{M16aa}
G_8(K)=\Big(\frac{\Delta_\mu}{\Delta_\alpha},\frac{r}{\Delta_\alpha}(A_2-\eta_2),
  \frac{r}{\Delta_\alpha}(\eta_1-A_1),\frac{r}{\Delta_\alpha}A_3\Big)+O(\bar{\gamma} +K^{-1}).
\end{equation}
Therefore
\begin{equation}\label{M16a}
c\leq S\leq C,\,\;c\leq I_1\leq C,\;\;c\leq I_2\leq C,\;\;c\leq I_{12}\leq C,
\end{equation}
where $C$ and $c$ are positive constants depending on $\alpha$ and $\eta$.  Furthermore, the Jacobian matrix at the point
(\ref{M16aa}) is
$$
D\left(\begin{matrix}0&-\alpha_1&-\alpha_2&-\alpha_3\\
\alpha_1&0&0&-\eta_1\\
\alpha_2&0&0&-\eta_2\\
\alpha_3&\eta_1&\eta_2&0
\end{matrix}\right)+O(\bar{\gamma}+K^{-1})
$$
and therefore,
$$
\det J_8(K)=SI_1I_2I_{12}\Delta_\alpha^2+O(\bar{\gamma}+K^{-1}).
$$
Thus we may assume that $\det J_8(K)\geq c_1>0$. This fact together with (\ref{M16a}) gives
$$
0<c_2\leq| \lambda_j(K)\textcolor{black}{|\leq c_3},\;\;j=1,2,3,4,\;\;K\geq K_*\;\;\textcolor{black}{\bar \gamma \leq \bar \gamma_*},
$$
where $\lambda_j(K)$, $j=1,2,3,4$, are eigenvalues of $J_8(K)$.

%

Assume that $\lambda=i\tau,$ $c_2\leq \tau\leq c_3$ is an eigenvalue to $J_8$.
We will now show that this leads to a contradiction. Multiplying both sides of (\ref{theeigenvalueequation}) by $D^{-1}\bar u$ and taking the real part and using that $\Re(Au,u)=0$ we get $=\Re ((K+\Gamma)u,u)=0$
or
\begin{equation}\label{realpartof(K+gamma)u,uin complicated form}
    -\frac{r}{K}|u_1|^2-\bar \gamma r_1r_2|u_4|^2 -\gamma_1\Re(u_3\bar u_2)-\gamma_2 \Re(u_2\bar u_3) + \bar \gamma\Re\{(r_2u_2+r_1u_3)\bar u_4\}=0
 \end{equation}
Let us derive some relations between $u_1,u_2,u_3$ and $u_4$. From the first three equations in (\ref{theeigenvalueequation}) we obtain
\begin{align}\label{firstthreeequationsofsimpleeigenvalueequation}
    S(-\frac{r}{K}u_1-\alpha_1u_2-\alpha_2u_3-\alpha_3u_4)&=i\tau u_1
    \\
    I_1(\alpha_1u_1-\gamma_1u_3-\eta_1u_4)&=i\tau u_2\nonumber
    \\
    I_2(\alpha_2u_1-\gamma_2u_2-\eta_2u_4)&=i\tau u_3\nonumber
\end{align}
We rewrite the last two equations as
\begin{align*}
i\tau u_2+\gamma_1I_1u_3&=\alpha_1I_1u_1-\eta_1I_1u_4
\\
i\tau u_3+\gamma_2I_2u_2&=\alpha_2I_2u_1-\eta_2I_2u_4
\end{align*}
and solving them we obtain
\begin{eqnarray}\label{u_2expressedinu_u_4 and u_1}
    &&u_2=\frac{(-i\tau \alpha_1I_1+\alpha_2\gamma_1I_1I_2)u_1-(\eta_2\gamma_1I_1I_2-i\tau\eta_1I_1)u_4}{\tau^2+\gamma_1\gamma_2I_1I_2}\\
   && u_3=\frac{(\alpha_1\gamma_2I_1I_2-i\tau\alpha_2I_2)u_1    -(\eta_1\gamma_2I_1I_2-i\tau\eta_2I_2)u_4}{\tau^2+\gamma_1\gamma_2I_1I_2}.\label{u_3 expressed in u_1 and u_4}
\end{eqnarray}
Inserting these relations in (\ref{firstthreeequationsofsimpleeigenvalueequation}) we get
\begin{align*}
    &u_1(\frac{i\tau}{S}+\frac{r}{K}+\alpha_1\frac{-i\tau\alpha_1I_1+\alpha_2\gamma_1I_1I_2}{\tau^2+\gamma_1\gamma_2I_1I_2}+ \alpha_2\frac{\alpha_1\gamma_2I_1I_2-i\tau\alpha_2I_2}{\tau^2+\gamma_1\gamma_2I_1I_2})
    \\
    &=u_4(-\alpha_3+\alpha_1\frac{-i\tau\eta_1I_1+\eta_2\gamma_1I_1I_2}{\tau^2+\gamma_1\gamma_2I_1I_2}+\alpha_2\frac{\eta_1\gamma_2I_1I_2-i\tau\eta_2I_2}{\tau^2+\gamma_1\gamma_2I_1I_2})
\end{align*}
This leads to
\begin{equation}\label{|u_4|<c|u_1|}
    |u_4|\leq C_3|u_1|
\end{equation}
The relations (\ref{u_2expressedinu_u_4 and u_1}) and (\ref{u_3 expressed in u_1 and u_4}) together with (\ref{|u_4|<c|u_1|}) gives
$$
    |u_2|,\;|u_3|\leq C_4|u_1|
$$
Now (\ref{realpartof(K+gamma)u,uin complicated form}) implies that
$$
    -\frac{r}{K}|u_1|^2+C_1\bar{\gamma}|u_1|^2=0.
$$
This is impossible if
$C_1$ is sufficiently small. Thus the local stability of $G_8(K)$, $\textcolor{black}{K}\geq K_*$, is proved.

The local stability of $G_8(K)$ for $K\in (\hat{K}_1,K_*]$ is proved in the same manner as in the proof of Proposition \ref{pro:eta_2-1}.

\end{proof}

\section{Equilibrium transition with increasing $K$}

In this section we finalize our results in two theorems describing the equilibrium branch for the sets of parameter $\eta_1^*>\eta^*_2>1$ and $\eta_1^*>1>\eta^*_2$.
\subsection{Equilibrium transition when $\eta_1^*>\eta_2^*>1$}
In this section we will prove that there exist an equilibrium branch $G_2\rightarrow G_3 \rightarrow G_6 \rightarrow G_8 \rightarrow G_7\rightarrow G_5$ in the case $\eta_1^*>\eta_2^*>1$.
By Corollary $5$ in \cite{part1} we know that for these parameters there is an equilibrium branch
$$
    G_2\rightarrow G_3 \rightarrow G_6 \rightarrow \ldots
$$
Furthermore from section \eqref{sec:g6}
we know the this branch continues onto $G_8$ at $K=K_1$.
From section \ref{section the equilibrium state G7} and section \ref{section bifurcation of G7} in this paper as well as Theorem~1 from \cite{part1}, we get that there exist an equilibrium branch
$$
\ldots\rightarrow G_8\rightarrow G_7\rightarrow G_5.
$$
One could suspect that these two equilibrium branches are the two parts of a complete equilibrium branch. We shall now prove that indeed that is the case.

\begin{theorem} Let (\ref{May1aa}), (\ref{Ko1}) and Assumption II hold and let  $\eta_1^*>\eta_2^*>1$. Then there exist a unique  branch of equilibrium points $G^*(K)$ parameterised by $K\in (0,\infty)$:
\begin{enumerate}
    \item [$(a)$] for $0< K\leq \sigma_1$ the point $G^*(K)$ is of type $G_2$

    \item[$(b)$] for $\sigma_1<K\leq \frac{\sigma_1\eta_1^*}{\eta^*_1-1}$ the point $G^*(K)$ is of type $G_3$;
    \item[$(c)$] for $\frac{\sigma_1\eta^*_1}{\eta^*_1-1}<K\leq\frac{\hat S_1\eta_1^*}{\eta^*_1-1}$ the point $G^*(K)$ is of type $G_6$;
    \item[$(d)$] for $\frac{\hat S_1\eta_1^*}{\eta_1^*-1}< K<\frac{\hat S_2\eta_2^*}{\eta_2^*-1}$ the point $G^*(K)$ is of type $G_8$;
    \item[$(e)$] for $\frac{\hat S_2\eta_2^*}{\eta_2^*-1}\leq K<\frac{\sigma_3\eta_2^*}{\eta_2^*-1}$ the point $G^*(K)$ is of type $G_7$;
    \item[($f$)] for $K\geq\frac{\sigma_3\eta_2^*}{\eta_2^*-1}$ the point $G^*(K)$ is of type $G_5$.
\end{enumerate}
we display this schematically as (see figure \ref{figure1})
\begin{equation}
    G_2\rightarrow G_3 \rightarrow G_6 \rightarrow G_8 \rightarrow G_7 \rightarrow G_5.
\end{equation}

The point $G^*(K)$ is locally stable whenever it is not a coexistence point. It is also locally stable near the end on the interval $\hat K_1 <K< \hat K_2$ and it is locally stable on the whole interval if $\bar \gamma$ is small
\end{theorem}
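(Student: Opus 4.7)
The plan is to build the branch $G^*(K)$ piecewise on the six intervals listed in $(a)$--$(f)$, verify that the pieces glue at their common endpoints into a single continuous curve of equilibria, and then establish uniqueness and stability. For the initial segment $(a)$--$(c)$ I would invoke directly the construction already performed in \cite{part1}, which under assumption \eqref{M3a} produces a branch $G_2 \to G_3 \to G_6$ with the threshold values $K=\sigma_1$ and $K=\sigma_1\eta_1^*/(\eta_1^*-1)$; the upper endpoint of $(c)$, namely $K=\hat S_1 \eta_1^*/(\eta_1^*-1)=\hat K_1$, coincides with the bifurcation point of $G_6$ computed in Section~\ref{sec:g6}. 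Symmetrically, for $(e)$--$(f)$ I would obtain the piece $G_7\to G_5$ from the description of $G_7$ in Section~\ref{section the equilibrium state G7} and Corollary~\ref{cor:G7stable}, together with the exit into $G_5$ at $K=\sigma_3\eta_2^*/(\eta_2^*-1)$ established in \cite{part1}; the left endpoint of $(e)$ is exactly $\hat K_2$ by \eqref{eq: K_2,S_2}. For the coexistence interval $(d)$, Lemma~\ref{lemma:exG8}(i) directly supplies a coexistence branch $G_8(K)$ defined for $\hat K_1<K<\hat K_2$ and bifurcating from $\hat G_6$ at $K=\hat K_1$ and from $\hat G_7$ at $K=\hat K_2$.

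Concatenating the pieces into one continuous curve is forced by the bifurcation lemmas. At $K=\hat K_1$, Lemma~\ref{lemma about bifurcation at S_1} asserts that in a small neighbourhood of $\hat G_6$ the full set of equilibria is exhausted by the two smooth branches $G_6(K)$ and $G_8(K)$; therefore the $G_6$-branch of $(c)$ and the $G_8$-branch of $(d)$ necessarily share the common limit $\hat G_6$, yielding a continuous transition $(c)\to(d)$. The same argument, now via Lemma~\ref{lemma linearization of G_8 at K_2} at $\hat K_2$, produces the transition $(d)\to(e)$. Global uniqueness of the branch would then be pieced together from three observations: on non-coexistence intervals the type $G_j$ is determined by its explicit formula in Proposition~\ref{pro:equil}; on the coexistence interval every coexistence equilibrium lies on our branch by Lemma~\ref{lem:contG_8} and Lemma~\ref{lemma:exG8}(i); and the exhaustion clauses in Lemmas~\ref{lemma about bifurcation at S_1} and \ref{lemma linearization of G_8 at K_2} prevent any parasitic branch from appearing at the two transitions.

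For stability I would collect the already available assertions: local stability of $G_2,G_3,G_6$ on their respective intervals follows from standard Routh--Hurwitz analysis in \cite{part1} and Proposition~\ref{pro:G6}, and the same is true for $G_7$ and $G_5$ via Proposition~\ref{pro:G7} and Corollary~\ref{cor:G7stable}. On the coexistence interval, Lemma~\ref{lemma about bifurcation at S_1} gives local stability of $G_8(K)$ for $\hat K_1<K\le\hat K_1+\varepsilon$, Lemma~\ref{lemma linearization of G_8 at K_2} gives it for $\hat K_2-\varepsilon\le K<\hat K_2$, and Proposition~\ref{pro:eta_2-1} extends stability to the entire interval provided $\bar\gamma$ is sufficiently small. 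The main obstacle I anticipate is purely a bookkeeping issue: one must check carefully that the interval endpoints in $(c)$--$(e)$ agree with the bifurcation values $\hat K_1$ and $\hat K_2$ as defined in \eqref{Apr30b} and \eqref{eq: K_2,S_2}, and that the inequality $\hat S_1>\hat S_2$ from \eqref{S1S2} is compatible with the monotonicity $\partial S/\partial K<0$ from Lemma~\ref{lemma: interval properties og G_8}, so that the $S$-coordinate of $G^*(K)$ decreases continuously across the gluing points. Once this consistency is verified, the theorem follows.
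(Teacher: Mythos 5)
Your proposal is correct and follows essentially the same route as the paper: the paper's own proof is a one-line appeal to Lemma~\ref{lem:contG_8} and Proposition~\ref{pro:eta_2-1}, relying on the discussion immediately preceding the theorem (Corollary~5 and Theorem~1 of \cite{part1} for the $G_2\to G_3\to G_6$ and $G_7\to G_5$ segments, and the bifurcation analysis of Sections~\ref{sec:g6} and~\ref{section bifurcation of G7} for the gluing at $\hat K_1$ and $\hat K_2$), which is exactly the material you have assembled and made explicit. Your version is in fact a more careful write-up of the same argument, including the exhaustion clauses of Lemmas~\ref{lemma about bifurcation at S_1} and~\ref{lemma linearization of G_8 at K_2} for uniqueness at the transition points.
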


\begin{proof}
This theorem follow from Lemma~\ref{lem:contG_8} and Proposition~\ref{pro:eta_2-1} in section~\ref{sec:locstab}

\end{proof}
\begin{figure}[!ht]
\begin{tikzpicture}[scale=1.1]
\draw[->,name path=xaxis] (0,0) -- (10.2,0) node[right] {$K$};
\draw[->,name path=yaxis] (0,0) -- (0,5.2) node[above] {$S^*$};
\draw[very thick,dashed,color=black,name path=plot,domain=0:5.2] plot (1.5*1,\x);
\draw[very thick,dashed,color=black,name path=plot,domain=0:5.2] plot (1.5*1.5,\x);
\draw[very thick,dashed,color=black,name path=plot,domain=0:5.2] plot (1.5*4,\x);
\draw[very thick,dashed,color=black,name path=plot,domain=0:5.2] plot (1.5*4.66,\x);
\draw[very thick,dashed,color=black,name path=plot,domain=0:5.2] plot (1.5*6,\x);
\draw[very thick,dashed,color=black,name path=plot,domain=0:1.5*7] plot (\x,1.5*2.33);
\draw[very thick,dashed,color=black,name path=plot,domain=0:1.5*7] plot (\x,1.5*2.66);
 %
\node[scale=1] (X) at (1.5*1/2,1.5*1.5) { $G_2$};
\node[scale=1] (X) at (1.5*1.25,1.5*1.5) { $G_3$};
\node[scale=1] (X) at (1.5*5.5/2,1.5*1.5) { $G_6$};
\node[scale=1] (X) at (1.5*4.33,1.5*1.5) { $G_8$};
\node[scale=1] (X) at (1.5*5.33,1.5*1.5) { $G_7$};
\node[scale=1] (X) at (1.5*6.5,1.5*1.5) { $G_5$};
\node[scale=1] (X) at (1.5*1,0) { $|$};
\node[below,scale=1] (X) at (1.5*1,0) {$\sigma_1$};
\node[scale=1] (X) at (1.5*1.5,0) { $|$};
\node[below,scale=1] (X) at (1.5*1.5,0) {$\frac{\sigma_1\eta_1^*}{\eta_1^*-1}$};
\node[scale=1] (X) at (1.5*4,0) {$|$};
\node[below,scale=1] (X) at (1.5*4,0) {$\frac{S_1\eta_1^*}{\eta^*_1-1}$};
\node[scale=1] (X) at (1.5*4.66,0) { $|$};
\node[below, scale=1] (X) at (1.5*4.66,0) { $\frac{S_2\eta_2^*}{\eta^*_2-1}$};
\node[scale=1] (X) at (1.5*6,0) { $|$};
\node[below,scale=1] (X) at (1.5*6,0) {$\frac{\sigma_3\eta_2^*}{\eta^*_2-1}$};
\node[left,scale=1] (X) at (0,1.5) {$\sigma_1$};
\node[scale=1] (X) at (0,1.5) { $-$};
\node[left,scale=1] (X) at (0,3) {$\sigma_2$};
\node[scale=1] (X) at (0,3) { $-$};
\node[left,scale=1] (X) at (0,4) {$S_1$};
\node[scale=1] (X) at (0,4) { $-$};
\node[left,scale=1] (X) at (0,3.5) {$S_2$};
\node[scale=1] (X) at (0,3.5) { $-$};
\node[left,scale=1] (X) at (0,4.5) {$\sigma_3$};
\node[scale=1] (X) at (0,4.5) { $-$};
\draw[-,very thick,color=black,name path=plot,domain=0:1*1.5]
 plot(\x,\x);
 \draw[-,very thick,color=black,name path=plot,domain=1.5*1:1.5*1.5]
 plot(\x,1.5);
  \draw[-,very thick,color=black,name path=plot,domain=1.5*1.5:1.5*4]
 plot(\x,{1.5+2/3*(\x-1.5*1.5)});
 \draw[-,very thick,color=black,name path=plot,domain=1.5*4:1.5*4.66]
plot (\x,{0.05*sin(2*3.14/(1.5*0.66)*(\x-1.5*4) r)+(1.5*2.66-1/2*(\x-1.5*4))});
 \draw[-,very thick,color=black,name path=plot,domain=1.5*4.66:1.5*6]
 plot(\x,{1.5*2.33+1/2*(\x-1.5*4.66)});
  \draw[-,very thick,color=black,name path=plot,domain=9:10]
 plot(\x,1.5*3);
\end{tikzpicture}
\caption{\label{figure1}
This graphs gives the idea of how the $S^*$ component of the equilibrium branch changes with $K$ and shows the type of the equilibrium point. The function $S^*(K)$ is a piecewise linear function except in the interval $\frac{S_1\eta^*_1}{\eta^*_1-1}<K<\frac{S_1\eta^*_2}{\eta^*_2-1}$ where it is strictly decreasing. Note that the order of the elements on both axis is correct.
}
\end{figure}
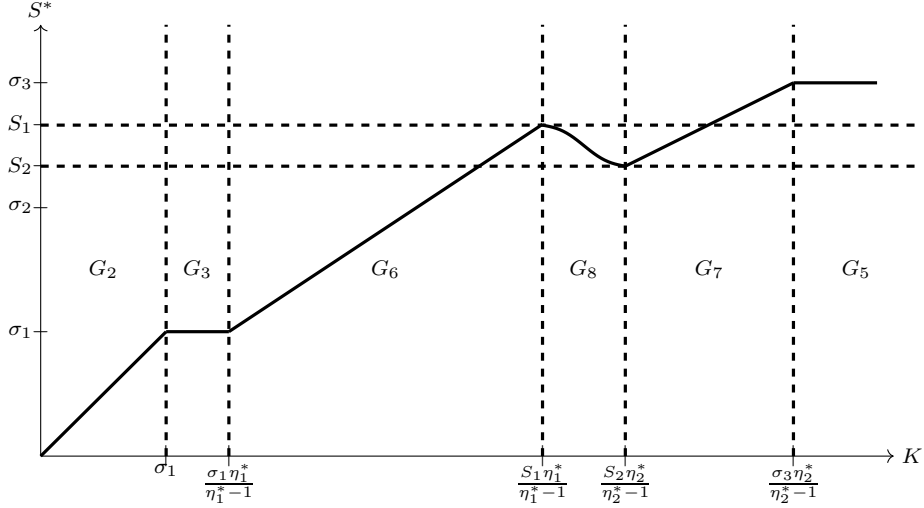
\subsection{Equilibrium transition when $\eta_1^*>1>\eta_2^*$}
 In this section we will prove that there exist an equilibrium branch $\hat G_2\rightarrow \hat G_3 \rightarrow \hat G_6 \rightarrow \hat G_8 $ in the case $\eta_1^*>1>\eta_2^*$.
By Corollary $5$ in \cite{part1} we know that for these parameters there is an equilibrium branch
$$
    \hat G_2\rightarrow \hat G_3 \rightarrow \hat G_6 \rightarrow \ldots
$$
Furthermore from section \ref{sec:g6}
we know the this branch continues onto $G_8$ at $K=\hat K_1$. We are left to prove that this equilibrium does persists.

\begin{theorem} Let (\ref{May1aa}), (\ref{Ko1}) and Assumption II hold and let
 $\eta_1^*>1>\eta_2^*$. Then there exists a unique branch of equilibrium points $G^*(K)$ parameterised by $K\in (0,\infty)$:

\begin{enumerate}
    \item [$(a)$] for $0<K\leq \sigma_1$ the point $G^*(K)$ is of type $G_2$
    \item [$(b)$]  for $\sigma_1<K\leq \frac{\sigma_1\eta_1^*}{\eta^*_1-1}$ the point $G^*(K)$  is of type $G_3$
    \item [$(c)$] for $\frac{\sigma_1\eta^*_1}{\eta^*_1-1}<K\leq\frac{\hat S_1\eta_1^*}{\eta^*_1-1}$ the point $G^*(K)$ is of type $G_6$;
       \item[$(d)$] for $K>\frac{\hat S_1\eta_1^*}{\eta_1^*-1}$ the point $G^*(K)$ is of type $G_8$;
\end{enumerate}

We display this schematically as (see figure \ref{figure2})
\begin{equation}
    \hat G_2\rightarrow \hat G_3 \rightarrow \hat G_6 \rightarrow \hat G_8.
\end{equation}
The point $G^*(K)$ is locally stable whenever it is not a coexistence point. It is also locally stable near the left end on the interval ${\hat K}_1<K<\infty$ and it is locally stable if $K\overline{\gamma}$ is small.

\end{theorem}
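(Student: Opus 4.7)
The plan is to assemble the branch by concatenating pieces already established in the paper and then invoke the stability results uniformly. For the range $0<K\le \hat{K}_1$, parts (a)--(c) follow directly from Corollary 5 of \cite{part1}, which produces the segment $G_2 \to G_3 \to G_6$ with the stated thresholds $\sigma_1$ and $\frac{\sigma_1\eta_1^*}{\eta_1^*-1}$; local stability away from the bifurcation points is part of that corollary. The key observation for (c) under the present hypothesis $\eta_1^*>1>\eta_2^*$ is that, by Proposition~\ref{pro:G6}, the stability of $G_6$ is preserved until $S^*=\hat S_1$, i.e.\ until $K=\hat K_1$, because the case $\eta_2^*<\eta_1^*$ forces $Q=\hat S_1$.

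For the transition at $K=\hat K_1$, I would apply Lemma~\ref{lemma about bifurcation at S_1}: under Assumption II (which guarantees $\partial_SP(\hat S_1)>0$) there exist exactly two branches through $\hat G_6$, namely $G_6(K)$ and a smooth coexistence branch $G_8(K)$; the latter is locally stable for $K$ slightly larger than $\hat K_1$, while $G_6(K)$ loses stability there (an exchange of stability). This justifies the switch to type $G_8$ and delivers local stability of $G^*(K)$ near the left end of the interval in (d).

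To extend (d) to all $K>\hat K_1$, I would invoke Lemma~\ref{lem:contG_8}(iii), which, under $\eta_1^*>1>\eta_2^*$, shows that the branch starting at $\hat G_6$ persists as a coexistence branch for all $K\in(\hat K_1,\infty)$, together with Lemma~\ref{lemma:exG8}(ii), which guarantees that every coexistence equilibrium sits on this particular branch (hence uniqueness). Uniqueness of the whole branch $G^*(K)$ is then assembled by combining: uniqueness of $G_2$, $G_3$, $G_6$ on the relevant intervals from \cite{part1}; uniqueness in a neighbourhood of $\hat G_6$ from Lemma~\ref{lemma about bifurcation at S_1}; and the coexistence uniqueness from Lemma~\ref{lemma:exG8}(ii). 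For the local stability claim in (d) when $K\bar{\gamma}$ is small, I would apply Theorem~\ref{the:Kgamma1} directly, noting that its hypotheses match the present setting.

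The only nontrivial point is showing that no alternative stable equilibrium detaches the branch on $(\hat K_1,\infty)$: one must rule out that $G^*(K)$ could switch to $G_5$ or $G_7$. For $G_5$ this is excluded because $\eta_2^*<1$ makes the carrying capacity thresholds for $G_5$ in \cite{part1} inconsistent with stability (the relevant bifurcation to $G_5$ requires $\eta_2^*>1$ so that $G_7$ can intervene). For $G_7$ it is excluded because Corollary~\ref{cor:G7stable} requires $\eta_2^*>1$ for existence of a stable $G_7$. Thus the coexistence branch cannot terminate at a boundary equilibrium for finite $K$, and Lemma~\ref{lem:contG_8}(iii) gives its continuation up to $K=\infty$. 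The main obstacle I anticipate is bookkeeping uniqueness carefully across the gluing points $K=\sigma_1$, $K=\frac{\sigma_1\eta_1^*}{\eta_1^*-1}$, $K=\hat K_1$; this is routine once one uses the local bifurcation lemma at $\hat K_1$ and the a priori exclusion of $G_4$, $G_5$, $G_7$ supplied by \eqref{M3a}, \eqref{Ko1} and the hypothesis $\eta_2^*<1$.
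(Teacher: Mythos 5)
Your proposal is correct and follows essentially the same route as the paper: the paper's own proof is a one-line appeal to Lemma~\ref{lem:contG_8} and Theorem~\ref{the:Kgamma1}, with the preceding discussion supplying exactly the ingredients you list (Corollary 5 of the first part for the $G_2\to G_3\to G_6$ segment, the bifurcation Lemma~\ref{lemma about bifurcation at S_1} at $\hat K_1$, and Lemma~\ref{lemma:exG8}(ii) for uniqueness of the coexistence branch). Your additional remarks ruling out $G_5$ and $G_7$ are not needed beyond what Lemma~\ref{lem:contG_8}(iii) already provides, but they do no harm.
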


%
\begin{proof}
This theorem follow from Lemma  \ref{lem:contG_8} and Theorem \ref{the:Kgamma1} in section \ref{sec: Local stability in the case eta_1^*>1>eta_2^*}.
\end{proof}

\begin{figure}[!ht]
\begin{tikzpicture}[scale=1.1]
\draw[->,name path=xaxis] (0,0) -- (10.2,0) node[right] {$K$};
\draw[->,name path=yaxis] (0,0) -- (0,5.2) node[above] {$S^*$};
\draw[very thick,dashed,color=black,name path=plot,domain=0:5.2] plot (1.5*1,\x);
\draw[very thick,dashed,color=black,name path=plot,domain=0:5.2] plot (1.5*1.5,\x);
\draw[very thick,dashed,color=black,name path=plot,domain=0:5.2] plot (1.5*4,\x);
\draw[very thick,dashed,color=black,name path=plot,domain=0:7] plot (\x,1.5*2.66);
 %
\node[scale=1] (X) at (1.5*1/2,1.5*1.5) { $G_2$};
\node[scale=1] (X) at (1.5*1.25,1.5*1.5) { $G_3$};
\node[scale=1] (X) at (1.5*5.5/2,1.5*1.5) { $G_6$};
\node[scale=1] (X) at (1.5*1,0) { $|$};
\node[below,scale=1] (X) at (1.5*1,0) {$\sigma_1$};
\node[scale=1] (X) at (1.5*1.5,0) { $|$};
\node[below,scale=1] (X) at (1.5*1.5,0) {$\frac{\sigma_1\eta_1^*}{\eta_1^*-1}$};
\node[scale=1] (X) at (1.5*4,0) {$|$};
\node[below,scale=1] (X) at (1.5*4,0) {$\frac{S_1\eta_1^*}{\eta^*_1-1}$};
\node[left,scale=1] (X) at (0,1.5) {$\sigma_1$};
\node[scale=1] (X) at (0,1.5) { $-$};
\node[left,scale=1] (X) at (0,3) {$\sigma_2$};
\node[scale=1] (X) at (0,3) { $-$};
\node[left,scale=1] (X) at (0,4) {$S_1$};
\node[scale=1] (X) at (0,4) { $-$};
\node[left,scale=1] (X) at (0,3.5) {$S_2$};
\node[scale=1] (X) at (0,3.5) { $-$};
\node[left,scale=1] (X) at (0,4.5) {$\sigma_3$};
\node[scale=1] (X) at (0,4.5) { $-$};
\draw[-,very thick,color=black,name path=plot,domain=0:1*1.5]
 plot(\x,\x);
 \draw[-,very thick,color=black,name path=plot,domain=1.5*1:1.5*1.5]
 plot(\x,1.5);
  \draw[-,very thick,color=black,name path=plot,domain=1.5*1.5:1.5*4]
 plot(\x,{1.5+2/3*(\x-1.5*1.5)});
\draw[-,very thick,color=black,name path=plot,domain=1.5*4:1.5*7]
plot(\x,{(2.66*1.5+1.5*2.5*(\x-1.5*4))/(1+\x-1.5*4)});
\end{tikzpicture}
\caption{\label{figure2}
This graphs gives the idea of how the $S^*$ component of the equilibrium branch changes with $K$ and shows the type of the equilibrium point. The function $S^*(K)$ is a piecewise linear function except when $K>\frac{S_1\eta^*_1}{\eta^*_1-1}$ where it is strictly decreasing and converging to a value between $\hat S_1$ and $\hat S_2$. Note that the order of the elements on both axis is correct.
}
\end{figure}
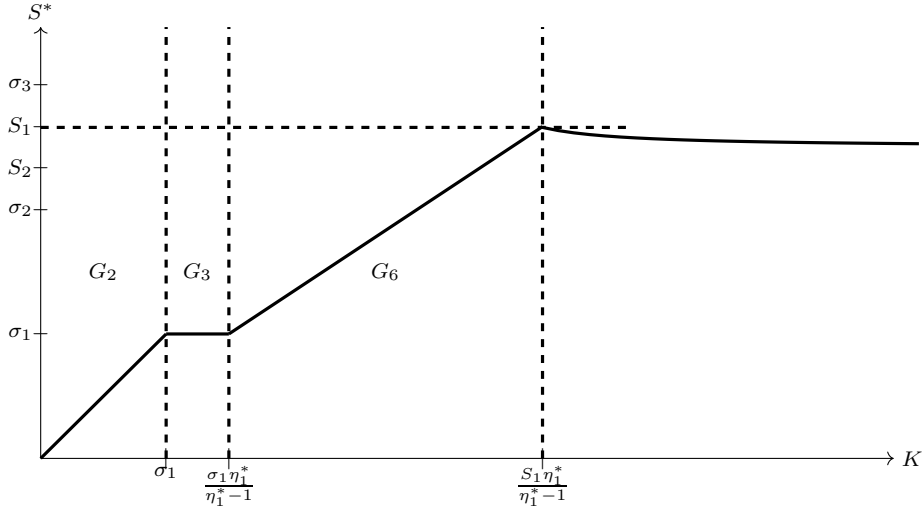

\section{Some concluding remarks}

Below we briefly comment on our results from the biological point of view. We start from $K=0$ and reason how the dynamics changes as $K$ increases.
For small carrying capacity $K$ the susceptible population will be kept so low that the likelihood of an infected individual spreading its \textcolor{black}{disease} will be too low (below 50\% ) for \textcolor{black}{any disease} to spread. As $K$ increases the stable susceptible population increase.

When the stable susceptible population reaches $\sigma_1$, any increase in $S^*$ due to increased $K$ will result in the disease 1 with highest transmission rate to be able to spread. But it can only spread until the susceptible population is equal to $\sigma_1$.
So from now on $S^*=\sigma_1$ and an increases in $K$ gives an increase of $I^*_1$ . \textcolor{black}{Disease 2, with lower transmission rates then disease 1, can not spread since it is outcompeted by disease 1.}

\textcolor{black}{The disease 2 can however spread through the population of infected with disease 1. Under the condition $\frac{\sigma_1\eta_1^*}{\eta_1^*-1}<K<\frac{\min(\sigma_2,\textcolor{black}{\hat S_1})\eta_1^*}{\eta_1^*-1}$ the sum of susceptibles and infected of disease 1 will be so high that disease 2 can spread. However disease 2 will only occur as a coinfection in the stable state. This is a result of the fact that we assume that coinfected individuals can only spread both disease simultaneously. The single infections of disease 2 are either outcompeted by disease 1 or they become part of the coinfected compartment. For these $K$ the compartment of single infected of disease 1 will decrease with $K$. This does however not mean that disease 1 becomes less prevalent, only that it occurs more as a coinfection. The susceptibles increase for these $K$. This is a consequence of the assumption of the coinfection being less transmissible then single infection. When the coinfection rises the average transmission rate of the diseases decrease allowing the susceptible population to increase.}


\textcolor{black}{
For the parameters dealt with in this paper ($\eta^*_1>1$) it will happen that as the average transmission rate of the disease decrease eventually single infection of disease 2 will be more transmissible then disease 1 and the coinfection and will thus be able to spread as a single infection giving rise to a stable coexistence point.
From there either the equilibrium point stays as a coexistence point for all large $K$ or the single infections starts to only occur in coinfections, with disease 1 being the first to stop occurring as a single infection. The susceptible population can only increase to $\sigma_3$ at which point any increase in susceptibles would even be absorbed be the least transmittable compartment (coinfection). The sick population can by assumption not reproduce and so it must also have an upper bound. If this upper bound is large compared to $\sigma_3$ we will have a situation where a large proportion of the population is sick making coinfection far more likely to occur then single infections resulting in the diseases only occuring as coinfection.
while the overall sick population can increase indefinitely. So when $K$ is large enough the number of sick individuals will be far more then the susceptibles making coinfections far more likely to occur then single infection leading to a stable state of coinfection with no single infections.}
%

\appendix

\section{Implicit function theorem}\label{thm:implicitfunction}
Let
$$
{\mathcal F}:\Bbb R^n\times\Bbb R^m\rightarrow\Bbb R^n
$$
be a $C^2$ mapping. Let us consider the equation
\begin{equation}\label{1z}
{\mathcal F}(x,y)=0.
\end{equation}
We assume that
$$
{\mathcal F}(0,0)=0\;\;\mbox{and that the matrix}\;\; A:=D_x{\mathcal F}(0,0)\;\;\mbox{is invertible}.
$$
Our aim is to find a solution to (\ref{1}) $x=x(y)$ such that $x(0)=0$ and estimate the region where such solution exists. We fix positive numbers $a$ and $b$  and put
$$
\Lambda=\Lambda_{a,b}=\{ (x,y)\,:\, |x|\leq a,\;|y|\leq b\}.
$$
Let also
$$
B_a=\{x\,:\, |x|\leq a\}.
$$
We introduce the quantities
$$
M=\max_{\Lambda}||D_xD_x{\mathcal F}(x,y)||,\;\;M_1=\max_{\Lambda}||D_yD_x{\mathcal F}(x,y)||.
$$
Here the above norms are understood in the following sense
$$
||D_xD_x{\mathcal F}(x,y)||=\max_{|\zeta|,|\xi|=1}|\sum_{i,j=1}^n\partial_{x_i}\partial_{x_j}{\mathcal F}(x,y)\zeta_i\xi_j|
$$
Here $|\cdot|$ is the usual euclidian norm.
We also introduce
$$
L=\max_{\Lambda}||D_y{\mathcal F}(x,y)||,
$$
where
$$
||D_y{\mathcal F}(x,y)||=\max_{|\xi|=1}|D_y{\mathcal F}(x,y)\xi|.
$$
The following result is a well known implicit function theorem. We supply it with a short proof since we want to include in the formulation a quantitative information about the solution.

\begin{theorem}
If the constants $a$ and $b$ satisfies
\begin{equation}\label{2}
||A^{-1}||(Ma+M_1b)\leq q\;\;\mbox{and}\;\;||A^{-1}||\Big(Ma+M_1b+L\frac{b}{a}\Big)\leq 1,
\end{equation}
where $q<1$ and $||A^{-1}||$ is the usual operator-norm of $A^{-1}$. 
Then there exist a $C^2$-function $x=x(y)$ defined for $|y|\leq b$  which delivers all solutions to (\ref{1z}) from $\Lambda$.
\end{theorem}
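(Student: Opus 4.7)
The plan is to rewrite $\mathcal{F}(x,y)=0$ as a fixed-point problem and apply the Banach contraction principle, with the quantities $a$ and $b$ in \eqref{2} tailored exactly to ensure self-mapping and contraction on the closed ball $B_a$. Specifically, for each fixed $y$ with $|y|\le b$, I introduce the operator
\begin{equation*}
T_y(x):=x-A^{-1}\mathcal{F}(x,y)=-A^{-1}\bigl(\mathcal{F}(x,y)-Ax\bigr),
\end{equation*}
whose fixed points are precisely the solutions to $\mathcal{F}(x,y)=0$.

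First I would verify that $T_y$ maps $B_a$ into itself. Writing $\mathcal{F}(x,y)-Ax=\mathcal{F}(x,y)-\mathcal{F}(0,0)-D_x\mathcal{F}(0,0)x$ and expanding along the segment $t\mapsto(tx,ty)$, one gets an integral representation whose integrand contains the factors $D_x\mathcal{F}(tx,ty)-D_x\mathcal{F}(0,0)$ and $D_y\mathcal{F}(tx,ty)$. The first is bounded in norm by $M|tx|+M_1|ty|\le Ma+M_1b$ by the definitions of $M,M_1$, and the second by $L$. This yields
\begin{equation*}
|T_y(x)|\le \|A^{-1}\|\bigl((Ma+M_1b)a+Lb\bigr),
\end{equation*}
which is $\le a$ precisely by the second inequality in \eqref{2}.

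Next I would establish the contraction property. For $x_1,x_2\in B_a$ one has $T_y(x_1)-T_y(x_2)=-A^{-1}\int_0^1\bigl[D_x\mathcal{F}(x_2+t(x_1-x_2),y)-A\bigr](x_1-x_2)\,dt$, and the same estimate on second derivatives on $\Lambda$ gives
\begin{equation*}
|T_y(x_1)-T_y(x_2)|\le \|A^{-1}\|(Ma+M_1b)\,|x_1-x_2|\le q\,|x_1-x_2|
\end{equation*}
by the first inequality in \eqref{2}. Since $q<1$, Banach's theorem produces a unique fixed point $x(y)\in B_a$ for each $y\in B_b$, and this fixed point collects all solutions of \eqref{1z} in $\Lambda$.

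Finally, for regularity: at $(x(y),y)$ the Jacobian $D_x\mathcal{F}(x(y),y)$ differs from $A$ in norm by at most $Ma+M_1b\le q\|A^{-1}\|^{-1}$, hence it is invertible by a Neumann series argument. Differentiating the identity $\mathcal{F}(x(y),y)=0$ then gives $x'(y)=-[D_x\mathcal{F}(x(y),y)]^{-1}D_y\mathcal{F}(x(y),y)$, which is continuous, and a second differentiation combined with the $C^2$ regularity of $\mathcal{F}$ shows $x\in C^2$. The main obstacle is really just the bookkeeping in the self-mapping estimate: making sure the $Lb$ term (coming from motion in the $y$-variable) is absorbed by the factor $b/a$ in the second inequality of \eqref{2}, as this is what distinguishes the quantitative version from the textbook statement.
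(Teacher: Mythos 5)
Your proposal is correct and follows essentially the same route as the paper: the identical fixed-point operator $x\mapsto A^{-1}\bigl(Ax-\mathcal{F}(x,y)\bigr)$, the same segment-integral expansions bounding the self-mapping constant by $\|A^{-1}\|\bigl((Ma+M_1b)a+Lb\bigr)$ and the Lipschitz constant by $\|A^{-1}\|(Ma+M_1b)\le q$, and Banach's theorem. Your closing regularity argument via invertibility of $D_x\mathcal{F}$ and implicit differentiation is just a slightly more explicit version of what the paper defers to its follow-up theorem on derivative estimates.
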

\begin{proof} We write (\ref{1z}) as a fixed point problem
\begin{equation}\label{1a}
x=F(x,y),\;\;\;\mbox{where}\;\;F(x,y)=A^{-1}\big( Ax-{\mathcal F}(x,y)\big).
\end{equation}
Let us check that $F$ maps $B_a$ into itself and that it is a contraction operator there.

To show the first property we note that
$$
{\mathcal F}(x,y)=\int_0^1\frac{d}{dt}{\mathcal F}(tx,ty)dt=\int_0^1\sum_{i=1}^n \partial_{x_i}{\mathcal F}(tx,ty)x_i+\sum_{k=1}^m \partial_{y_k}{\mathcal F}(tx,ty)y_kdt.
$$
Therefore
$$
F(x,y)=A^{-1}\int_0^1\Big(\sum_{i=1}^n(\partial_{x_i}{\mathcal F}(0,0)- \partial_{x_i}{\mathcal F}(tx,ty))x_i-\sum_{k=1}^m\partial_{y_k}{\mathcal F}(tx,ty)y_k\Big)dt.
$$
Since
\begin{eqnarray*}
&&\partial_{x_i}{\mathcal F}(0,0)- \partial_{x_i}{\mathcal F}(tx,ty)=-\int_0^1 \frac{d}{d\tau}(\partial_{x_i}{\mathcal F})(\tau tx,\tau ty)d\tau\\
&&=-\int_0^1\Big( \sum_{j=1}^n\partial_{x_j}\partial_{x_i}{\mathcal F}(\tau tx,\tau ty)tx_j
+\sum_{k=1}^m\partial_{y_k}\partial_{x_i}{\mathcal F}(\tau tx,\tau ty)y_k\Big)d\tau,
\end{eqnarray*}
we get
\begin{eqnarray*}
&&|F(x,y)|\leq|A^{-1}\int_0^1\int_0^1\sum_{i=1}^n\Big(-\sum_{j=1}^n\partial_{x_j}\partial_{x_i}{\mathcal F}(\tau tx,\tau y)tx_j
+\sum_{k=1}^m\partial_{y_k}\partial_{x_i}{\mathcal F}(\tau tx,\tau y)y_k\Big)x_id\tau dt\\
&&+\Big|\int_0^1\sum_{k=1}^m\partial_{y_k}F(tx,ty)y_kdt\Big |
\leq ||A^{-1}||(Ma+M_1b+L\frac{b}{a})|a|<a,
\end{eqnarray*}
which guarantees that $F$ maps $B_a$ on to itself.

For checking the contraction property we write
\begin{eqnarray*}
&&|F(x_1,y)-F(x_2,y)|=|A^{-1}(A(x_1-x_2)-\int_0^1\frac{d}{dt}{\mathcal F}((x_2+t(x_1-x_2),y)dt|
\\
&&\leq ||A^{-1}||\,\Big|\int_0^1\sum_{i=1}^n\partial_{x_i}{\mathcal F}(0,0)- \partial_{x_i}{\mathcal F}(x_2+t(x_1-x_2),y))(x_1-x_2)_idt\Big|
\\
&&\leq||A^{-1}||\int_0^1\Big( \sum_{i,j}|\partial_{x_j}\partial_{x_i}{\mathcal F}(\tau (x_2+t(x_1-x_2),\tau y)tx_j(x_1-x_2)_i|
\\
&&+|\sum_{i,k}\partial_{y_k}\partial_{x_i}{\mathcal F}(\tau (x_2+t(x_1-x_2),\tau y)y_k(x_1-x_2)_i|\Big)d\tau)dt
\leq q|x_1-x_2|,
\end{eqnarray*}
so $F$ is a contraction and by the Banach fixed point theorem we can conclude that there exist a unique $c^1$-function $x=x(y)$ defined for $|y|\leq b$. Since ${\mathcal F}\in C^2$ the same is true for $x(y)$.
\end{proof}

In the next assertion we present estimates of the derivatives of the solution $x(y)$.
\begin{theorem}
The matrix $D_x{\mathcal F}(x,y)$ is invertible for all $(x,y)\in\Lambda$ and
\begin{equation}\label{Apr18a}
|D_x{\mathcal F}(x,y)^{-1}|\leq \frac{||A^{-1}||}{1-q}.
\end{equation}
Furthermore
\begin{equation}\label{Apr18aa}
|D_yx(y)|\leq
\frac{||A^{-1}||L}{1-q},
\end{equation}
and
\begin{equation}\label{Apr18ab}
    ||D_yD_yx||\leq \frac{||A^{-1}||}{1-q}(M\frac{||A^{-1}||^2L^2}{(1-q)^2}+2N\frac{||A^{-1}||L}{1-q}+M_2),
\end{equation}
where
$$
N=\max_{\Lambda}||D_yD_xF(x,y)||,\;\;M_2=\max_{\Lambda}||D_yD_yF(x,y)||.
$$
\end{theorem}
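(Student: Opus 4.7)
The plan is to establish the three estimates in sequence, each one following from differentiating the identity $\mathcal{F}(x(y),y)\equiv 0$ and applying the bounds from the previous step.

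For \eqref{Apr18a}, I would write $D_x\mathcal{F}(x,y)=A\bigl(I+A^{-1}(D_x\mathcal{F}(x,y)-A)\bigr)$ and estimate the perturbation by integrating $D_x\mathcal{F}$ along a line from $(0,0)$ to $(x,y)$ in $\Lambda$, which gives
\begin{equation*}
\|D_x\mathcal{F}(x,y)-A\|\le M|x|+M_1|y|\le Ma+M_1b.
\end{equation*}
By the first bound in \eqref{2} this yields $\|A^{-1}(D_x\mathcal{F}(x,y)-A)\|\le q<1$, so the Neumann series produces $(D_x\mathcal{F})^{-1}$ with $\|(D_x\mathcal{F})^{-1}\|\le \|A^{-1}\|/(1-q)$.

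For \eqref{Apr18aa}, I would differentiate the identity $\mathcal{F}(x(y),y)=0$ to obtain
\begin{equation*}
D_x\mathcal{F}(x(y),y)\,D_yx(y)+D_y\mathcal{F}(x(y),y)=0,
\end{equation*}
so that $D_yx(y)=-(D_x\mathcal{F})^{-1}D_y\mathcal{F}$. Combining the bound from \eqref{Apr18a} with the definition of $L$ immediately gives $|D_yx(y)|\le \|A^{-1}\|L/(1-q)$.

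For \eqref{Apr18ab}, I would differentiate the relation $D_x\mathcal{F}\cdot D_yx+D_y\mathcal{F}=0$ once more in $y$. Using the chain rule on $D_x\mathcal{F}(x(y),y)$ and $D_y\mathcal{F}(x(y),y)$ produces
\begin{equation*}
D_x\mathcal{F}\cdot D_yD_yx=-\bigl(D_xD_x\mathcal{F}\cdot D_yx\cdot D_yx+2\,D_yD_x\mathcal{F}\cdot D_yx+D_yD_y\mathcal{F}\bigr),
\end{equation*}
where each $D_xD_x\mathcal{F}$, $D_yD_x\mathcal{F}$, $D_yD_y\mathcal{F}$ is evaluated at $(x(y),y)$. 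Bounding these by $M$, $N$, $M_2$ respectively and inserting the estimate \eqref{Apr18aa} for $D_yx$, then inverting $D_x\mathcal{F}$ using \eqref{Apr18a}, gives the claimed inequality.

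The only mildly delicate point is keeping track of norms of multilinear forms so that the composition estimates are clean; once the operator norms are interpreted as in the definitions preceding the statement, each step is a direct estimate. No step presents a substantive obstacle — the whole argument is the standard quantitative differentiation bookkeeping that the first theorem's proof was designed to support.
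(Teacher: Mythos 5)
Your argument is correct and follows essentially the same route as the paper's own proof: a Neumann-series perturbation of $A$ for \eqref{Apr18a}, implicit differentiation of $\mathcal{F}(x(y),y)=0$ for \eqref{Apr18aa}, and a second differentiation with the cross terms combined into $2N$ for \eqref{Apr18ab}. The only difference is the cosmetic choice of factoring $D_x\mathcal{F}=A\bigl(I+A^{-1}(D_x\mathcal{F}-A)\bigr)$ rather than the paper's $B^{-1}=A^{-1}(I+(B-A)A^{-1})^{-1}$, which changes nothing.
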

\begin{proof}
With $B=D_x{\mathcal F}(x,y)$ we have $B^{-1}=A^{-1}(I+(B-A)A^{-1})^{-1})$, which gives (\ref{Apr18a}) because
\begin{equation*}
    ||A^{-1}||||B-A||\leq||A^{-1}||(Ma+M_2b)\leq q.
\end{equation*}

Since
\begin{equation}\label{Apr18b}
{\mathcal F}_{x_k}x^k_{y_i}+{\mathcal F}_{y_i}=0,
\end{equation}
we arrive at (\ref{Apr18aa}) by using (\ref{Apr18a}) and definition of $L$.

Derivating once again (\ref{Apr18b}) with respect to $y$ we obtain
with Einsteins summation index
\begin{equation*}
\frac{\partial^2}{\partial y_i\partial y_j}{\mathcal F}={\mathcal F}_{x_kx_l}x^k_{y_i}x^l_{y_j}+{\mathcal F}_{y_ix_l}x^l_{y_j}+{\mathcal F}_{x_py_j}x^p_{y_i}+{\mathcal F}_{x_p}x^{p}_{y_iy_j}+{\mathcal F}_{y_iy_j}=0.
\end{equation*}
Solving for $x_{y_iy_j}$ we get
\begin{eqnarray*}
x_{y_iy_j}=-(F_x)^{-1}(F_{x_kx_l}x^k_{y_i}x^l_{y_j}+F_{y_ix_l}x^l_{y_j}+F_{x_py_j}x^p_{y_i}+F_{y_iy_j}).
\end{eqnarray*}
Using the definitions of norms we obtain (\ref{Apr18ab}).

\end{proof}

\begin{corollary}\label{cor:existenceofsolutionaroundbifurcationpointandestimationofderivatives}
Let $\Lambda_b=\{(x,y):|x|\leq \sqrt{b}, |y|\leq b\}$  and let
\begin{equation}\label{Apr18ca}
\hat M=\sum_{1\leq |\alpha|+k\leq 2}\max_{\Lambda_b}||D^\alpha_{x}D_y^\beta {\mathcal F}(x,y)||.
\end{equation}
If
\begin{equation}\label{Apr18c}
    ||A^{-1}||\hat M (\sqrt{b}+b)\leq c_{n,m},
\end{equation}
where $c_{n,m}$ is a positive constant depending only on $n$ and $m$, then there exist a $C^2$-function $x=x(y)$ defined for $|y|\leq b$ and such that $|x|\leq\sqrt{b}$, which delivers all  solution to \eqref{1z} from $\Lambda_b$. Moreover, the matrix $D_xF(x,y)$ is invertible for all $(x,y)\in \Lambda$ and
\begin{equation*}
|D_xF(x,y)^{-1}|\leq C||A^{-1}||,\;\;
    |D_yx(y)|\leq C||A^{-1}||\hat M,
\end{equation*}
\begin{equation*}
    ||D_yD_yx(y)||\leq C\hat M ||A^{-1}||(1+\hat M||A^{-1}||+\hat M^2||A^{-1}||^2),
\end{equation*}
where $C$  depends only on $n$ and $m$.
\end{corollary}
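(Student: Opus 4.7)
The plan is to apply the implicit function theorem stated immediately above with the specific choice $a=\sqrt{b}$, and then absorb all the various second--derivative constants into the single quantity $\hat M$.

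First I would observe that each of the norms appearing in the two preceding theorems, namely $M=\max_{\Lambda}\|D_xD_x\mathcal F\|$, $M_1=\max_{\Lambda}\|D_yD_x\mathcal F\|$, $L=\max_{\Lambda}\|D_y\mathcal F\|$, as well as $N=\max_{\Lambda}\|D_yD_x\mathcal F\|$ and $M_2=\max_{\Lambda}\|D_yD_y\mathcal F\|$, is (after replacing $\Lambda=\Lambda_{a,b}$ by $\Lambda_b$) one of the summands in the definition \eqref{Apr18ca} of $\hat M$, modulo a factor depending only on $n$ and $m$ coming from the different operator--norm conventions. Consequently each of $M,M_1,L,N,M_2$ is bounded by $C_{n,m}\hat M$.

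Next, with $a=\sqrt{b}$, the two conditions of \eqref{2} become
\[
\|A^{-1}\|(M\sqrt{b}+M_1 b)\le q,\qquad \|A^{-1}\|\bigl(M\sqrt{b}+M_1 b+L\sqrt{b}\bigr)\le 1.
\]
Using $M,M_1,L\le C_{n,m}\hat M$ these are implied by $\|A^{-1}\|\hat M(\sqrt{b}+b)\le c_{n,m}$ provided $c_{n,m}$ is chosen small enough (depending only on $n,m$); in fact this choice allows us to keep $q\le 1/2$, so that $1/(1-q)$ is a fixed dimensional constant. Thus \eqref{Apr18c} supplies the hypotheses of the first theorem, giving a unique $C^2$-branch $x=x(y)$ on $|y|\le b$ with $|x(y)|\le\sqrt{b}$ describing all solutions of \eqref{1z} in $\Lambda_b$ and the invertibility of $D_x\mathcal F$ throughout $\Lambda_b$.

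Finally, the three derivative estimates are read off from \eqref{Apr18a}, \eqref{Apr18aa} and \eqref{Apr18ab}: since $1/(1-q)$ is a constant depending only on $n,m$, \eqref{Apr18a} yields $|D_x\mathcal F^{-1}|\le C\|A^{-1}\|$, and $L\le C\hat M$ in \eqref{Apr18aa} yields $|D_y x|\le C\|A^{-1}\|\hat M$. Substituting the bounds $M,N,M_2,L\le C\hat M$ into \eqref{Apr18ab} produces exactly the claimed polynomial expression in $\hat M\|A^{-1}\|$. The only real point of care — and the mild obstacle — is the bookkeeping that all the dimensional factors hidden in $M,M_1,L,N,M_2\le C_{n,m}\hat M$ and in passing from the two inequalities \eqref{2} to the single inequality \eqref{Apr18c} can be absorbed into one fixed $c_{n,m}$ and into the final constants $C$; no new analytic idea is needed beyond the implicit function theorem already proved.
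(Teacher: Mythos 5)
Your proposal is correct and is essentially the argument the paper intends: the corollary is obtained by specializing the preceding implicit function theorem and its derivative estimates to $a=\sqrt{b}$, noting that $M,M_1,L,N,M_2$ are all dominated by $C_{n,m}\hat M$, and choosing $c_{n,m}$ small enough that \eqref{Apr18c} implies both inequalities in \eqref{2} with, say, $q\le 1/2$. The bookkeeping you describe, including the substitution into \eqref{Apr18a}, \eqref{Apr18aa} and \eqref{Apr18ab}, is exactly what is needed and matches the stated bounds.
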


\section{Bifurcation from a degenerate bifurcation point}\label{section appendix degenerate bifurcation point}
The results of the following section can not be considered as new. They can be deduced from the classical results from \cite{crandall1971bifurcation} and \cite{crandall1973bifurcation}, see also \cite{kielhofer2012introduction} for more complete presentation and related references. Here we give another more direct presentation which are more suitable for application to to models appearing in biological applications. First, systems here are finite dimensional and have a special structure, which essentially simplifies the proofs. Second the bifurcating parameter is fixed from the begining and we are interesting in bifurcation with respect to this parameter. Therefore we present here proofs which are more addapted to our situation.

\subsection{Interior equilibrium point}
Let $x'=(x_1,\ldots,x_{n-1})$ and $x=(x',x_n)$. Consider the problem
\begin{equation}\label{20Jan13a}
{\mathcal F}(x;s)=0,\;\;x\in {\Bbb R}^n,\;\;s\in {\Bbb R},
\end{equation}
 where ${\mathcal F}=({\mathcal F}_1,\ldots,{\mathcal F}_n)^T$. We put $F(x;s)=({\mathcal F}_1,\ldots,{\mathcal F}_{n-1})^T$ and assume that ${\mathcal F}_n(x;s)=f(x;s)x_n$, where $({\mathcal F}_1,\ldots,{\mathcal F}_{n-1})$ and $f$ are real valued function of class $C^2$ with respect to all variables.
Then the problem (\ref{20Jan13a}) can be written as
\begin{equation}\label{1}
F(x;s)=0,
\end{equation}
\begin{equation}\label{2}
x_nf(x;s)=0.
\end{equation}
It is assumed that there exists $x^*\in \Bbb R^{n-1}$ such that
$$
F(x^*,0;0)=0
$$
and that the $(n-1)\times (n-1)$-matrix
\begin{equation}\label{4}
A=\{A_{kj}\}_{k,j=1}^{n-1}=\{\partial_{x_j}{\mathcal F}_k(x^*,0;0)\}_{j,k=1}^{n-1}
\end{equation}
is invertible. This implies, in particular, that the equation
\begin{equation}\label{5}
F(\xi,0;s)=0
\end{equation}
has a solution $\xi(s)\in C^2([-b,b])$  such that $\xi(0)=x^*$. Here $b$ is a  positive number satisfying (\ref{Apr18c}),
where $\hat M$ is given by (\ref{Apr18ca}) with ${\mathcal F}$ replaced by $F$.

This is the only solution to (\ref{5}) in $\Lambda_b$ according to Corallary \ref{cor:existenceofsolutionaroundbifurcationpointandestimationofderivatives}. Moreover this solution is of the class $C^2$ and estimates of the derivatives are presented in the same corollary.
One can easily verify that  $\check{x}(s)=(\xi(s),0)$ solves system (\ref{1}), (\ref{2}) for $s\in [-b,b]$.

We assume that $f(x^*,0;0)=0$ and our goal is to construct a solution to  equations (\ref{1}), (\ref{2}) different from $\check{x}(s)$. This will be achieved if we  solve the problem
\begin{equation}\label{1a}
F(x;s)=0,
\end{equation}
\begin{equation}\label{2a}
f(x;s)=0.
\end{equation}
instead of (\ref{1}), (\ref{2}).
We denote the Jacobian matrix of the  right-hand side at the point $(x^*,0;0)$  by ${\mathcal A}$. Direct calculations show that
$$
{\mathcal A}=\begin{bmatrix}
A&\partial_{x_n}F(x^*,0;0)\\
\nabla_{x'}f(x^*,0;0)&\partial_{x_n}f(x^*,0;0)
\end{bmatrix}.
$$
To find the inverse of the matrix consider the equation
$$
{\mathcal A}(X',X_n)^T=(Y',Y_n).
$$
Then
\begin{equation}\label{15a}
\Theta X_n=\nabla_{x'}f\cdot A^{-1}Y'-Y_n\;\;\mbox{and}\;\;X'=A^{-1}(Y'-\partial_{x_n}F X_n),
\end{equation}
where and in what follows we assume that
$$
\Theta:=\nabla_{x'}fA^{-1}\partial_{x_n}F(x^*,0;0)-\partial_{x_n}f(x^*,0;0)\neq 0.
$$
So the matrix ${\mathcal A}$ is invertible if $A$ is invertible and $\Theta \neq 0$. From (\ref{15a}) it follows the estimates
$$
|X_n|\leq \frac{||A^{-1}||}{|\Theta|}\,|\nabla_{x'}f|\,|Y'|+\frac{1}{|\Theta|}|Y_n|
$$
and
\begin{eqnarray*}
&&|X'|\leq ||A^{-1}||\,(|Y'|+|\partial_{x_n}F|\,|X_n|)\leq ||A^{-1}||\Big(1+\frac{||A^{-1}||\,|\partial_{x_n}F|\,||\nabla_{x'}f||}{|\Theta|}\Big)|Y'|\\
&&+\frac{||A^{-1}||\,|\partial_{x_n}F|}{|\Theta|}Y_n.
\end{eqnarray*}
Therefore
$$
||{\mathcal A}^{-1}||\leq C\Big(||A^{-1}||+\frac{1}{|\Theta|}\big(1+|\partial_{x_n}F|\,||A^{-1}||\big)\big(1+|\nabla_{x'}f|\,||A^{-1}||\big)\Big),
$$
where $C$ is a positive constant depending only on $n$.

Let us introduce the quantity
$$
\mathcal{\hat M}=\sum_{1\leq |\alpha|+k\leq 2}\max_{\Lambda_b}(||D^\alpha_{x}\partial_s^kF(x;s)||+|D^\alpha_{x}\partial_s^kf(x;s)|),
$$
Then according to Corollary \ref{cor:existenceofsolutionaroundbifurcationpointandestimationofderivatives} there exists a solution $\hat{x}(s)=(\hat{x}'(s),\hat{x}_n(s))$ to \eqref{1}-\eqref{2} belonging to $C^2([-b,b])$
  such that $\hat{x}(0)=(x^*,0)$. Here $b$ is a positive number satisfying (\ref{Apr18c}), where $\hat{M}$ is replaced by $\hat{\mathcal M}$.

Let us evaluate the derivative $\frac{d\hat{x}(s)}{ds}$. Differentiating (\ref{1a}) and setting $s=0$, we have
\begin{equation*}
A\frac{d}{ds}\hat{x}'(0)+\partial_{x_n}F(x^*,0;0)\frac{d}{ds}\hat{x}_n(0)+\partial_sF(x^*,0;0)=0.
\end{equation*}
Differentiating $F(\xi(s),0;s)=0$ with respect to $s$ we get $\partial_sF(x^*,0;0)=-A\frac{d}{ds}\xi(0)$. Therefore
\begin{equation}\label{15ba}
A\frac{d}{ds}(x'-\xi)(0)+\partial_{x_n}F(x^*,0;0)\frac{d}{ds}\hat{x}_n(0)=0.
\end{equation}
Writing equation (\ref{2a}) as $f(\hat{x}(s);s)-f(\check{x}(s);s)+f(\check{x}(s);s)=0$ and differentiating it at $s=0$, we get
$$
\nabla_{x'}f(x^*,0;0)\frac{d}{ds}(\hat{x}'-\xi)(0)+\partial_{x_n}f(x^*,0;0)\frac{d}{ds}\hat{x}_n(0)+\frac{d}{ds}f(\check{x};s)\Big|_{s=0}=0.
$$
Therefore
\begin{equation}\label{Apr18d}
\hat{x}_n(s)=\frac{\frac{d}{ds}f(\xi(s),0;s)|_{s=0}}{\Theta}s+O(s^2),
\end{equation}
where $O(s^2)$ is estimated by $Cs^2$, where $C$ depends only on $n$, $||{\mathcal A}^{-1}||$ and $\hat{\mathcal M}$.
Other components are not important for us in this paper so we write only that
$$
\hat{x}'(s)=y^*+O(|s|)
$$
with similar  comment on $O(s)$ as above. From (\ref{15ba}) we can derive a similar formula for the derivative
$$
\frac{d}{ds}\hat{x}'(s)=\frac{d}{ds}\xi(0)-A^{-1}\partial_{x_n}F(x^*,0;0)\frac{d}{ds}\hat{x}_n(0)+O(|s|).
$$

\subsection{On smallest eigenvalue of the Jacobian}
%
The Jacobian matrix for system (\ref{1}), (\ref{2}) is
\[
{\mathcal J}={\mathcal J}(x;s)=\begin{bmatrix}
\partial_{x'}F(x;s)&\partial_{x_n}F(x;s)\\
\partial_{x'}(x_nf)&\partial_{x_n}(x_nf)
\end{bmatrix}.
\]
The Jacobian matrix
$$
{\mathcal J}(x^*,0;0)=\begin{bmatrix}
A &\partial_{x_n}F\\
0&0
\end{bmatrix}\;\;\mbox{at}\;\;(x;s)=(x^*,0;0)
$$
 has a simple eigenvalue $0$. Let us denote the perturbation of this eigenvalue at the point $(x;s)$ by $\lambda=\lambda(x;s)$. The smallest eigenvalue of ${\mathcal J}(\check{x}(s);s)$ is
 \begin{equation}\label{Apr20a}
   \lambda(\check{x}(s);s)=f(\check{x}(s);s)=\frac{d}{ds}f(\xi(s),0;s)|_{s=0}s+O(s^2).
 \end{equation}
   Our aim is to find smallest eigenvalue of ${\mathcal J}(\hat{x}(s);s)$ corresponding to the solution $\hat{x}(s)$.
The eigenvalue equation for the Jacobian at the point $\hat{x}(s)$ is
$$
{\mathcal A}\begin{bmatrix}
X'\\
X_n
\end{bmatrix}=\lambda \begin{bmatrix}
X'\\
X_n
\end{bmatrix}
$$
Without lost of generality we can put $X_n=1$. Solving this system with respect to $X'$ and putting the result in the last equation, we get
$$
-x_n\nabla_{x'}f\cdot\Big(D_{x'}F+x_n\partial_{x'}G(\hat{x};s)-\lambda\Big)^{-1}\partial_{x_n}(F)+\partial_{x_n}(x_nf)=\lambda.
$$
Which implies $\lambda(s)=-\hat{x}(s)\Theta+O(s^2)$ or, using  (\ref{Apr18d}), we get
\begin{equation}\label{Apr18db}
\lambda(\hat{x}(s);s)=-\frac{d}{ds}f(\xi(s),0;s)|_{s=0}s+O(s^2).
\end{equation}
Comparing (\ref{Apr20a}) and (\ref{Apr18db}), we see that the first derivative of smallest eigenvalue corresponding to solutions $\check{x}$ and $\hat{x}$ has opposite sign.

\begin{remark} If we assume that the function $s\to f(\xi(s),0;s)$ is strongly monotone on the interval $[-b,b]$ then all solution to (\ref{1}), (\ref{2}) in the set $|s|\leq b$, $|x'-x^*|^2+x_n^2\leq b$ are exhausted by $\check{x}(s)$ and $\hat{x}(s)$, where $b$ corresponds to $\mu:=\max(||A^{-1}||\hat{M},||{\mathcal A}^{-1}||\hat{\mathcal M})$ in (\ref{Apr18c}). Moreover derivatives of first and second order of this solutions are estimated by constant depending on $\mu$ and $n$ only.

\end{remark}

\bigskip
\noindent {\bf Acknowledgements.} Vladimir Kozlov was supported by the Swedish Research Council (VR), 2017-03837.
%
 \section*{Data availability statement}
The manuscript has no associated data.

 \section*{Compliance with ethical standards}

\textbf{Conflict of interest:} The authors declare that they have no conflict of interests.

\bibliographystyle{plain}%
\nocite{ghersheen2019mathematical}

\end{document}